\newcommand{\AR}{R} 
\newcommand{\TT}{L}
\def\calN{{\mathcal N}}
\numberwithin{equation}{section}
\newtheorem{theorem}{Theorem}[section]
\newtheorem{defn}[theorem]{Definition}
\newtheorem{lemma}[theorem]{Lemma}
\newtheorem{prop}[theorem]{Proposition}
\newtheorem{conjecture}[theorem]{Conjecture}
\newtheorem{remark}[theorem]{Remark}
\newtheorem{cor}[theorem]{Corollary}
\def\R{{\mathbb R}}
\def\N{{\mathbb N}}
\def\C{{\cal C}}
\def\Z{{\mathbb Z}}
\def\1{{1\!\!1}}
\def\E{{\mathbb E}}
\def\P{{\mathbb P}}
\def\T{\mathbb{T}}
\def\ind{\mathbb{I}}
\def\cal{\mathcal}
\def\leaves{\partial \T_{n,\Delta}}
\def\mean{:=}
\def\sm{\setminus}
\newcommand{\ba}{\begin{array}}
\newcommand{\ea}{\end{array}}
\newcommand{\bea}{\begin{eqnarray}}
\newcommand{\eea}{\end{eqnarray}}
\newcommand{\beas}{\begin{eqnarray*}}
\newcommand{\eeas}{\end{eqnarray*}}
\newcommand{\be}{\begin{equation}}
\newcommand{\ee}{\end{equation}}
\newcommand{\bt}{\begin{theorem}}
\newcommand{\et}{\end{theorem}}
\newcommand{\bc}{\begin{center}}
\newcommand{\ec}{\end{center}}
\newcommand{\ben}{\begin{enumerate}}
\newcommand{\een}{\end{enumerate}}
\newcommand{\ei}{\end{itemize}}
\newcommand{\ds}{\displaystyle}
\newcommand{\ve}{\varepsilon}
\newcommand{\G}{\mathbb{G}}
\newcommand{\Hg}{\mathbb{H}}
\newcommand{\pr}{\mathbb{P}}
\newcommand{\Pol}{\mathcal{P}}
\newcommand{\mbx}{\mbox{\bf x}}
\newcommand{\mbX}{\mbox{\bf X}}
\newcommand{\mby}{\mbox{\bf y}}
\newcommand{\mbzero}{\mbox{\bf 0}}
\newcommand{\mbone}{\mbox{\bf 1}}
\definecolor{Red}{rgb}{1,0,0}
\definecolor{Blue}{rgb}{0,0,1}
\newcommand{\rn}{m}
\newcommand{\meas}{\nu}
\newcommand{\fmeas}{\mu}
\newcommand{\hcmeas}{\mathbb{P}}
\newcommand{\dmeasl}{\mu^{(2)}_{\lambda}}
\newcommand{\dmeaslM}{\mu^{(M+1)}_{\lambda}}
\newcommand{\ex}{z}
\newcommand{\prodmu}{\fmeas^{\otimes_{|V|}}}
\newcommand{\fstar}{\bar{F}}
\newcommand{\ignore}[1]{\relax}
\author{
{\sf David Gamarnik}\thanks{MIT; e-mail: {\tt gamarnik@mit.edu}. Research supported  by the NSF grant CMMI-1031332.}
\and
{\sf Kavita Ramanan}\thanks{Brown University; e-mail: {\tt
    kavita\_ramanan@brown.edu} Research supported in part by NSF
  grants CMMI-1407504 and NSF DMS-1713032}
}
\begin{document}

\title{Uniqueness of Gibbs Measures for Continuous Hardcore Models} 

\date{}
 
\maketitle

\begin{abstract}
We formulate a  continuous version of the well known  discrete hardcore (or
independent set) model on a locally finite graph, parameterized by the
so-called activity parameter $\lambda > 0$.   In this 
version,   the state or ``spin value'' $x_u$ of any node $u$ of the
graph lies in the interval $[0,1]$,  the hardcore constraint $x_u + x_v
\leq 1$ is satisfied  for every edge $(u,v)$ of the graph, 
and the space of feasible configurations is  given by a convex polytope.        
When the graph is a regular   tree, 
we show  that there is a unique Gibbs measure associated to each activity parameter $\lambda>0$.  
Our result shows that, in contrast to the standard discrete hardcore
model, the continuous hardcore model does not exhibit a phase
transition on the infinite regular tree. 
 We also consider a family of continuous models that interpolate
 between the discrete and continuous hardcore models on a regular tree
 when $\lambda = 1$ and show
  that each member of the family has a unique Gibbs measure, even
  when the discrete model does not. 
  In each case, the proof  entails the 
 analysis of an associated  Hamiltonian dynamical
system that describes a certain 
 limit of the marginal distribution at a node. 
Furthermore, given any sequence of regular graphs with fixed degree
and girth diverging to infinity, 
we apply our results to compute the asymptotic limit of  suitably normalized volumes of the 
corresponding sequence of convex polytopes of feasible configurations.  In particular, this yields 
an approximation for the partition function of the continuous hard core model on a regular graph 
with large girth in the case $\lambda = 1$. 
\end{abstract}

\noindent 
{\bf Key Words. }  Hardcore model, independent set, Gibbs measures, phase transition,
partition function,  linear programming polytope,
volume computation, convex polytope,
  computational hardness, regular graphs \\
{\bf 2010 Mathematics Subject Classification. }  Primary:  60K35, 82B20; Secondary: 82B27, 68W25 

\section{Introduction}

\subsection{Background and Motivation. } 
\label{subs-motivation}

The (discrete) hardcore model, also commonly called the independent set model, 
is a widely studied model in statistical mechanics 
as well as combinatorics and theoretical
computer science. 
The model  defines a family of probability measures on 
configurations on a finite or infinite (but locally finite)  graph
$\G$, parameterized by the so-called {\em activity}  $\lambda > 0$. 
On a finite graph $\G$, with node set $V$ and edge set $E$, the hardcore
probability measure with parameter $\lambda > 0$ is supported on the
collection  of independent sets of the graph $\G$ and the probability
of an independent set $I\subset V$ is  proportional to $\lambda^{|I|}$, where
$|I|$ denotes the size of the independent set.   Equivalently, the
hardcore  
probability measure can be thought of as being supported on the set of
configurations $\mbx = (x_u, u \in V)\in \{0,1\}^V$ that satisfy  the hardcore
constraint  $x_u+x_v\le 1$ for every edge $(u,v) \in E$,  with the probability 
of any feasible configuration $\mbx$ being proportional to $\lambda^{\sum_u x_u}$.   
The equivalence between the two formulations follows from the
observation that given any  hardcore configuration $\mbx$, the set
$I = \{u \in V: x_u = 1\}$ is an independent set, and $\sum_{u \in V}
x_u = |I|$. 
The constructed probability measure is a Gibbs measure, in the sense
that it satisfies a certain spatial Markov property \cite{GeorgyGibbsMeasure,SimonLatticeGases,barvinok2017combinatorics}.
On an infinite graph $\G$, the definition of the hardcore Gibbs measure is no longer explicit. Instead, it is defined
implicitly as a measure that has certain specified  conditional distributions 
on finite subsets of the graph, given the configuration on the complement.   Thus, in
contrast to the case of finite graphs, on infinite graphs, neither
existence nor uniqueness of a Gibbs measure is {\em a priori} guaranteed. 
  While existence can be generically shown for a large
class of models,  uniqueness may fail to hold.    When there are 
multiple Gibbs measures for some parameter,  the model is said to
exhibit a phase transition 
\cite{GeorgyGibbsMeasure,SimonLatticeGases}.

The  standard discrete hardcore model on a regular tree is known to exhibit a phase
transition.  Indeed, 
it was shown  in~\cite{Spitzer75,Zachary83,KellyHardCore} 
that there is a unique  hardcore Gibbs measure on an  infinite $(\Delta+1)$-regular tree
$\T_\Delta$ (i.e., a tree in which  every node has degree $\Delta+1$) 
 if and only if $\lambda\le \lambda_c (\Delta) \mean  \Delta^\Delta/(\Delta-1)^{\Delta+1}$.
In particular, for $\lambda$ in this range, the model exhibits a
certain correlation decay property, whereas when $\lambda>\lambda_c(\Delta)$ 
the model exhibits long-range dependence. Roughly speaking, the correlation decay property says that
the random variables $X_u$ and $X_v$ distributed according to the
marginal of the Gibbs measure at nodes $u$ and $v,$ respectively,
become asymptotically independent as the graph-theoretic distance
between $u$ and $v$ tends to infinity. This property
is known to be equivalent to  uniqueness of the Gibbs measure~\cite{GeorgyGibbsMeasure,SimonLatticeGases}.
The phase transition result above
was recently extended to a  generalization of the
hardcore 
model, which is defined on configurations $\mbx \in \{0,1\ldots,
M\}^V$, for some integer $M$,   that
satisfy 
the hardcore constraint $x_u + x_v \leq M$ for $(u,v) \in E$;  the usual hardcore model is recovered by
setting $M=1$. 
Specifically, it was shown in
\cite{RamananSenguptaZiedinsMitra,galvin2011multistate} that 
the model on $\mathbb{T}_{\Delta}$  exhibits phase coexistence for
all sufficiently high $\lambda$, and the  point of phase
transition was identified asymptotically, as $\Delta$ tends to
infinity.  
The original model and its recent generalizations are also motivated
by applications in the field of communications \cite{KellyHardCore,RamananSenguptaZiedinsMitra,LueRamZie06}, in addition to the original statistical physics motivation.

The phase transition property on the infinite tree is known to be 
related to the algorithmic question of computing the partition 
function (or normalizing constant) associated with a Gibbs measure on a finite graph. 
Although the latter computation problem falls into the so-called
\#P-complete algorithmic complexity class for many models  (including
the standard hardcore model), there exist 
polynomial time approximation
algorithms, at least for certain models and corresponding ranges of
parameters.  More precisely, when the underlying parameters
are such that the corresponding Gibbs measure is  unique, 
 a polynomial time approximate computation of the corresponding
 partition function has been  shown to be possible for several discrete models  including 
the hardcore model  ~\cite{weitzCounting,BandyopadhyayGamarnikCounting},
matching
model~\cite{JerrumSinclairHochbaumApproxAlgorithms,BayatiGamarnikKatzNairTetali},
coloring model~\cite{BandyopadhyayGamarnikCounting,GamarnikKatz}, 
and some general binary models (models with two spin values) \cite{li2013correlation}. 
For some models, including  the standard hardcore and matching models, approximate
computation has been shown to be feasible whenever the model is in the
uniqueness regime.  For some other problems, including
counting the number of proper colorings of a graph, an 
approximation algorithm has been constructed only for a restricted
parameter range, although
it is conjectured to exist whenever the model is in the uniqueness regime. 
Furthermore, the converse has also been established for the hardcore
model and some of its extensions. Specifically, it
was shown in \cite{sly2010computational} that for certain  parameter values for
which there are multiple Gibbs measures,  approximate 
computation of the partition function in polynomial time
becomes impossible, unless P=NP. 
This link between the phase transition property on the infinite
regular tree $\T_{\Delta}$ and  hardness of
approximate compution of the partition function on a graph with
maximum degree $\Delta+1$ is conjectured to exist for  general models.

\subsection{Discussion of Results. } 
\label{subs-discussion}

In light of the connection between phase transitions and hardness of
computation mentioned above, an interesting problem to consider
is the problem of computing the volume of a (bounded) convex polytope,
obtained as the intersection of finitely many half-spaces.  
It is known that, while this volume computation  problem is \#P-hard \cite{dyer1988complexity},
it  admits a randomized polynomial time approximation scheme~\cite{DyerFriezeKannan}, regardless of the parameters of the model. In fact, such an algorithm
exists for computing the volume of an arbitrary convex body, subject to minor regularity conditions.
This motivates the  investigation of 
this problem from the phase transition perspective, by considering a
model in which the partition function is simply the volume of a
polytope.  
Towards this goal, 
 we introduce the {\em continuous hardcore model} on
a finite graph $\G$, which defines a measure that 
is supported on  the following  special type of polytope
\begin{align}
\label{polytope}
\Pol (\G)=\{\mbx=(x_u, u\in V): x_u \geq 0, x_u+x_v\le 1,
\forall~u \in V, (u,v)\in
E\}, 
\end{align}
where $V$ and $E$ are, respectively, the vertex and edge set of the graph $\G$.  
$\Pol(\G)$ is the linear programming relaxation of the independent set
polytope of the graph, and we refer to it as the linear
programming (LP) polytope of the graph
$\G$.   The continuous hardcore model with parameter $\lambda = 1$ is
simply the uniform measure on $\Pol (\G)$, and   
the associated partition function  is equal 
to the volume of the convex polytope $\Pol (\G)$.  

As in the
discrete case,  the continuous hardcore  model
defines a one-parameter  family of probability measures,  indexed by
the activity $\lambda > 0$  (see  Section \ref{section:model} for a precise
definition). We consider this model on an infinite regular tree
$\T_\Delta$.  
Our main result (Theorem \ref{theorem:MainResult})   
 is that, unlike the standard hardcore model, 
the continuous hardcore model on an infinite regular tree \emph{never}
exhibits a phase transition.   Namely, for every choice of $\Delta$
and $\lambda$, there is a  unique Gibbs measure for the continuous
hardcore model on  $\T_\Delta$ with activity $\lambda$. 
This result provides support for the conjecture that the link between the phase
transition property and hardness of approximate computation of  the partition function
is indeed valid for general models, including those in which the spin
values, or states of vertices, are continuous, rather than discrete. 
Moreover, in  Theorem \ref{th:DiffEq}  we characterize the cumulative
distribution function of the marginal at any node  of the continuous hardcore Gibbs measure (with
parameter $\lambda =1$) on the
infinite regular tree as the unique solution to a certain
ordinary differential
equation (ODE). 
An analogous  result is conjectured to hold for general $\lambda > 0$
 (see Conjecture \ref{conjecture:DiffEq}). 

We extend our result further by considering a natural interpolation
between the standard two-state hardcore and the continuous hardcore
models when $\lambda = 1$. 
Here, in addition to the hard-core constraint, 
the spin values $x_u$ are further restricted to belong to $[0,\epsilon]\cup
[1-\epsilon,1]$ for some fixed parameter $\epsilon\in (0, 1/2)$.   In
a sense made precise in Section \ref{subs-discussion}, when $\epsilon \rightarrow
1/2$ one obtains  the continuous hardcore model and as $\epsilon
\rightarrow 0$, it more closely 
resembles  the two-state hardcore model.    We establish, perhaps surprisingly, that the 
model  has a unique Gibbs measure for any positive value
$\epsilon>0$ (see Theorem \ref{theorem:MainResult2}), even when the 
discrete-hard core model (formally corresponding to $\ve = 0$) has
multiple Gibbs measures.   The same
argument does not easily  extend to  the case of general $\lambda$, 
and we leave this case open for further exploration. 

Our last result (Theorem \ref{theorem:MainResultRegularGraph}) concerns the computation of 
the volume of the LP  polytope of a regular  locally tree-like graph, in the limit as the
number of nodes and girth of the graph goes to infinity.   This result parallels some of the developments in
~\cite{BandyopadhyayGamarnikCounting}, where 
it is shown 
that the  partition functions associated with the standard hardcore model
defined on  a sequence of increasing regular locally tree-like graphs,  with growing girth, after appropriate
normalization, have a limit, and this limit coincides for all regular
locally tree-like graphs with degree $\Delta +1$ 
 when the model is in the uniqueness 
regime for the tree $\T_\Delta$, namely when $\lambda<\lambda_{c}(\Delta)$.  We establish a similar
result here, showing that the sequence of partition functions associated with the continuous
hardcore model on a sequence of increasing regular graphs with large girth,  after appropriate normalization,
has a well defined limit.   We establish a corresponding
approximation result for the continuous hardcore model, which is  valid for all $\lambda >
0$ since, as shown  in Theorem \ref{theorem:MainResult}, the
continuous hardcore  model has a unique Gibbs measure 
for every $\lambda$.  For the case $\lambda = 1$,  when combined with our 
characterization of the Gibbs measure in Theorem \ref{th:DiffEq}, this provides
a fairly explicit approximation of  the normalized volume of the LP 
polytope of  a  regular graph with large girth.

We now comment on the proof technique underlying our result.  
To establish uniqueness 
of the Gibbs measure, we establish the  correlation decay property. 
Unlike for the discrete hardcore model,  establishing correlation
decay  for continuous models is significantly more
challenging technically, 
since it involves analyzing recursive maps on the space of absolutely continuous
(density) functions, rather than one-dimensional or
finite-dimensional recursions, and  the function obtained as the
limit of these recursive maps is 
characterized as the solution to a certain nonlinear second-order ordinary differential
equation (ODE) with boundary conditions, 
rather than as the fixed point of a finite-dimensional map.   
The direct approach of establishing a contraction property, which is
commonly used in the analysis of discrete models,  appears unsuitable in 
our case.  
Instead, establishing existence, uniqueness and the correlation decay property 
entails the analysis of this ODE.
A key step that facilitates this analysis is the identification of a certain Hamiltonian
structure of the ODE.  This can be exploited, along with certain
monotonicity properties,  to establish uniqueness
of the Gibbs measure.   Characterization of the unique marginal
distribution at a node requires additional work, which is related to establishing uniqueness of the
solution to this ODE with suitable boundary conditions, and involves   
a detailed sensitivity analysis of a related parameterized
family of ODEs. 

\subsection{Outline of Paper and Common Notation} 
\label{subs-nottion}

The remainder of the paper is organized as follows. In Section
\ref{section:model} we  precisely define  the continuous hardcore
model, and a family of related models. 
Then, in Section \ref{sec-mainres}  we state our main results. 
 In Section~\ref{section:ProofHardCore} we prove our main
results, Theorem \ref{theorem:MainResult} and
\ref{theorem:MainResult2}, on correlation decay (and hence uniqueness
of the Gibbs measure) for the continuous
hardcore model and  its $\ve$-interpolations for $\ve \in (0,1/2]$. 
  In Section~\ref{eq:DE-Uniqueness}, we characterize the 
  marginal 
distribution of the unique Gibbs measure for the continuous hardcore model
with $\lambda = 1$ as the unique solution  to a certain 
nonlinear ODE.   The conjectured characterization for $\lambda \neq 1$
is described in Section \ref{subs-conjecture}.  
In Section~\ref{section:RegularGraphs} we prove our result regarding
the volume of the LP polytope of a regular graph with  large girth. 

 In what follows, given a set $A$, we let $\ind_{A}$ denote the indicator
function of the set $A$: $\ind_A (x) = 1$ if $x \in A$ and
$\ind_A(x) = 0$, otherwise, and when $A$ is finite,  let $|A|$ denote its cardinality.  For $a\in [0,1]$, let $\delta_a$
denote the Dirac delta measure at $a$,  let  $dx$ denote one-dimensional
Lebesgue measure, and given $\mbx = \{x_u, u \in V\}$, let $d \mbx$
denote $|V|$-dimensional Lebesgue measure.    Also, for any subset
$A\subset V$,   let $\mbx_A$ represent the vector
$(x_u, u \in A)$.  Let $\R$ and $\R_+$ denote the sets of real and non-negative real numbers,
respectively.  Given  any subset $S$
of $J$-dimensional Euclidean space $\mathbb{R}^J$, let ${\mathcal
  B}(S)$ represent the collection of Borel subsets of $S$.  For
conciseness, given a measure $\mu$ on ${\mathcal B} (\mathbb{R})$, for intervals $[a,b]$, we will
use ${\mathcal B}[a,b]$ and $\mu [a,b]$ to represent ${\mathcal
  B}([a,b])$ and $\mu([a,b])$, respectively.

\section{A family of hardcore models}\label{section:model}

Let $\G$ be a simple  undirected graph with  finite node set $V=V(\G)$
and  edge set $E = E(\G)$, and recall the associated LP 
polytope defined in \eqref{polytope}. 
We now introduce the continuous hardcore
model on the finite graph $\G$ associated with any parameter $\lambda  > 0$.   In fact, we will
 introduce a more general family of hardcore models that will 
  include both the discrete and continuous hardcore models in a
  common framework,  and allow us to also interpolate between the two. 
 Any model in this family is specified by a finite Borel measure $\fmeas$ on $[0,1]$, which we refer to as the
``free spin measure'' for the model. 
The free spin measure $\fmeas$ 
represents the weights the model puts on different states or spin values when
the graph $\G$ is a single isolated vertex; specific examples are
provided below. 
Given a free spin measure $\fmeas$ on the Borel sets ${\mathcal
  B}[0,1]$ of $[0,1]$ 
and $k \in \N$, let $\mu^{\otimes_k}$ represent the product measure on 
${\mathcal B}([0,1]^{k})$ with identical marginals equal to  $\fmeas$.

\begin{defn}
 The hardcore model corresponding to the graph $\G = (V,E)$ and free
 spin measure $\fmeas$  is  the
 probability measure $\hcmeas  = \hcmeas_{\G,\fmeas}$ 
 given by 
\begin{align}
\label{def-hcmeas}
   \hcmeas (A) \mean  \frac{1}{Z} \prodmu (A),  \qquad A \in {\mathcal  B} (\Pol), 
\end{align}
where $\Pol = \Pol (\G)$ is the  LP polytope defined in \eqref{polytope} 
and $Z$ is the partition function or  normalization constant given
by 
\begin{align}
\label{def-hcZ}
Z \mean  \prodmu  (\Pol). 
\end{align}
\end{defn}

The measure $\hcmeas$  is well defined as long as $Z  > 0$.   Since the hypercube $\{\mbx: 0\le x_u\le
1/2,  \forall u \in V\}$ is a subset of $\Pol(\G)$ for
every graph $\G$,  a simple sufficient condition for this to hold is that the
free spin measure satisfies $\fmeas[0,1/2] > 0$.   This will be the case in
all the models we study.

  We now describe the free spin measure associated with specific
  models. 
For $\lambda > 0$,   the free spin
measure of the two-state hard-core model with activity $\lambda$ is given by  $\mu  =
\dmeasl$, 
\begin{align}
\label{freespin-dhc}
\dmeasl (B)  \mean  \lambda \delta_1 (B) + \delta_0 (B), \qquad B \in
{\mathcal B} [0,1]. 
\end{align}
The measure $\dmeasl$ in \eqref{freespin-dhc} is discrete, supported on $\{0,1\}$ and
gives weights  $\lambda$ and $1$ to the values $1$ and $0$,
respectively,  and the corresponding $\P_{\G,\dmeasl}$ defines  the
standard (discrete) hardcore model with
parameter $\lambda > 0$.   This model was generalized to an 
$(M+1)$-state  hardcore model, for some
integer $M \geq 1$, in 
\cite{RamananSenguptaZiedinsMitra,galvin2011multistate}. 
Given a parameter
$\lambda > 0$, the free spin measure associated with a rescaled 
version of the latter model (that has support $[0,1]$) is 
\begin{align}
\label{freespin-multihc}
 \dmeaslM (B) = \sum_{i=0}^M \lambda^i \delta_{i\over M} (B), \qquad B \in
{\mathcal B}[0,1].  
\end{align}
The case $M = 1$ then recovers the standard (two-state) hardcore model.

We now define the continuous hardcore model on $\G$ with parameter $\lambda
> 0$ to be the measure $\hcmeas_{\G}^\lambda \mean
\hcmeas_{\G,\meas_\lambda}$ where the free spin measure $\meas_\lambda$
takes the form 
\begin{align}
 \label{freespin-chc}
 \meas_\lambda (B) \mean   \int_{B} \lambda^x  dx,  \quad B \in {\mathcal
   B}[0,1].  
\end{align}
Despite the similarity in the definitions in \eqref{freespin-chc} and
\eqref{freespin-multihc}, 
 an important difference is that  while $\dmeasl$ 
is  discrete, 
$\meas_\lambda$ in \eqref{freespin-chc} is absolutely continuous with respect to Lebesgue
measure.    In fact,  when $\lambda = 1$, the free spin measure is just the 
uniform distribution on $[0,1]$,  the corresponding Gibbs measure  is simply the uniform
measure on the polytope $\Pol$,  and $Z$ is the volume of the polytope
$\Pol$, 
as  already mentioned in Section
\ref{subs-discussion}. 
For  each activity parameter $\lambda  > 0$, 
we  also introduce a family of models, indexed by $\ve \in
(0,1/2)$ which we refer to as the 
$\ve$-continuous hardcore model 
that interpolate between the discrete and continuous hardcore models
with the same activity parameter. 
For $\lambda > 0$ and $ \ve  \in (0,1/2)$,  
the free spin measure of the $\ve$-continuous hardcore model with
activity 
parameter 
$\lambda$  is given by 
\begin{equation}
\label{freespin-epschc}
\meas^\ve_\lambda (B) \mean \frac{1}{2 \ve} \int_{B}\lambda^x \left(\ind_{[0,\ve]} (x)  + 
  \ind_{[1-\ve,1]} (x)\right) dx,  \qquad B \in {\mathcal B}[0,1]. 
\end{equation}
We now clarify the precise sense in which this interpolates 
between the discrete and continuous models.  
Given probability measures $\{\pi_{\ve}\}$ and $\pi$ on ${\mathcal
  B}[0,1]$, recall that $\pi_{\ve}$ is said to
converge weakly to $\pi$ as $\ve \rightarrow \ve_0$, 
 if for every bounded continuous function $f$ on
$[0,1]$, $\int_{[0,1]} f(x)
\pi_{\ve} (dx) \rightarrow \int_{[0,1]} f(x) \pi (dx)$ as $\ve
\rightarrow \ve_0$. 
For any $\lambda > 0$, when $\ve \uparrow 1/2$, $\meas^{\ve}_\lambda$
converges weakly to  $\meas_\lambda$, the free spin measure of  the
continuous hardcore model with parameter $\lambda > 0$, 
 as in \eqref{freespin-chc}, whereas as $\ve \downarrow 0$, 
$\meas^{\ve}_\lambda$ converges weakly to $\dmeasl$, the
corresponding free spin measure of 
the two-state hardcore model as in \eqref{freespin-dhc}. 

Given any hardcore model on a finite graph $\G$ with free spin
measure $\fmeas$,  we  let $\mbX = (X_u, u\in V)$ 
denote a random element distributed according to 
  $\P_{\G,\fmeas}$, and refer to  $X_u$
as the spin value at $u$.   Recall that given a subset $S$  of nodes in $V(\G)$,
we use $\mbX_S = (X_u, u \in S)$ to denote the natural projection of $\mbX$ to the coordinates corresponding to $S$.
The constructed hardcore probability distributions $\P_{\G, \mu}$ are 
 Markov random fields, or Gibbs measures, 
in the sense that they satisfy the following spatial Markov property. Given any subset $S\subset V$, let $\partial S$ denote
the set of nodes $u$ in  $V\setminus S$ that have neighbors in $S$,
that is, for which $(u,v) \in E$ for some $v \in S$.  Then
for  every vector 
$\mbx=(x_u, u\in V)\in \Pol (\G)$ 
that lies in the support of $\P = \P_{\mu, \G}$, we have 
\begin{align*}
\P(\mbx_S |\mbx_{V\setminus S})=\P(\mbx_S|\mbx_{\partial S}). 
\end{align*}
Namely, the joint probability distribution of spin values $X_u$
associated with nodes $u\in S$, conditioned on the spin values at all
other nodes  of the graph is equal to the joint distribution obtained on just conditioning on spin
values at the boundary of $S$.  
 Of course, such a conditioning should be well defined, which is
 easily seen to be the case for the hardcore models we consider.

\section{Main Results}
\label{sec-mainres}

We now turn to the setup related to the main results in the paper.
We first recall some standard graph-theoretic notation. 
For every node
$u\in V$, $\calN(u)=\calN_\G(u)$ denotes the set of neighbors of $u$, namely the set $\{v: (u,v)\in E\}$.
The cardinality of $\calN(u)$ is called the degree of the node $u$ and is denoted by $\Delta(u)$. A leaf is a node with degree $1$.
Given a positive integer $\Delta$,  a graph is called $\Delta$-regular
if $\Delta(u)=\Delta$ for all nodes of the graph. The graph theoretic
distance between nodes $u$ and $v$ is the length of a shortest path
from $u$ to $v$ measured in terms of the number of edges on the path.
Namely, it is the smallest $m$ such that there exist nodes $u_0=u,u_1,\ldots,u_m=v$ such that $(u_i,u_{i+1}), i=0,1,\ldots,m-1,$ are edges.
A cycle is a path $u_0=u,u_1,\ldots,u_m$ such that $m\ge 3,~u_m=u_0$, and all $u_1,\ldots,u_m$ are distinct.
The girth $g=g(\G)$ of the graph $\G$ is the length of a shortest
cycle.

 Let $\T_{n,\Delta}$ denote a  rooted regular tree with
 degree $\Delta+1$ and depth $n$, which is a finite tree with a special
 vertex called the root node, in which every node has degree $\Delta+1$ except for the 
root node, and the leaves, which is 
 the collection of nodes that are at a graph-theoretic distance $n$
from the root node and denoted $\leaves$.    Each leaf has degree $1$, 
and the root has degree $\Delta$. 
Note that $\leaves$ is also  the
boundary of the remaining nodes of $\T_{n,\Delta}$ (which we refer to as internal
nodes).   Fix $\lambda > 0$  and let $\P_{n,\Delta,\lambda}$ 
represent the
(continuous) hardcore 
distribution on $\T_{n,\Delta}$ with parameter $\lambda > 0$, 
corresponding to the free spin   measure $\meas_\lambda$  in
\eqref{freespin-chc}.  
We denote the (cumulative) distribution
function of the marginal of $\P_{n, \Delta, \lambda}$ at the root node 
 by $F_{n,\Delta,\lambda}(\cdot)$.   Clearly, $F_{n,\Delta,\lambda}(\cdot)$ is
 absolutely continuous and we denote its density by
 $f_{n,\Delta,\lambda}(\cdot)$.  
Given an arbitrary realization of spin values at the boundary
$\mbx_{\leaves}$, we also
let $F_{n,\Delta,\lambda}(\cdot|\mbx_{\partial\T_{\Delta,n}})$ denote
the cumulative distribution function of the conditional distribution
of $\P_{n, \Delta, \lambda}$ at the root given $\mbx_{\leaves}$.  
It can be shown (see \eqref{eq:FnIteration} with $\mu = \meas_\lambda$) 
that for  $n \geq 2$, 
$F_{n,\Delta,\lambda}(\cdot|\mbx_{\partial\T_{\Delta,n}})$ has 
a density, which we denote by 
$f_{n,\Delta,\lambda}(x|\mbx_{\leaves})$. 
In particular,
for $x \in [0,1]$, 
\[ F_{n,\Delta,\lambda}(x)=\int_0^x f_{n,\Delta,\lambda}(t)dt, \qquad 
F_{n,\Delta,\lambda}(x|\mbx_{\partial\T_{\Delta,n}})=\int_0^x
f_{n,\Delta,\lambda}(t|\mbx_{\partial\T_{\Delta,n}})dt. 
\]

We now state our first main result, which is proved in
Section~\ref{subs-pf}. 
 For any absolutely continuous function $F$,  we let $\dot{F}$ denote the
derivative of $F$, which exists almost everywhere. 
 Also, for any real-valued function $g$ on $[0,\infty)$ and
compact set ${\mathcal K} \subset [0,\infty)$, we let
$||g(\cdot)||_{{\mathcal K}} \mean \sup_{x \in {\mathcal K}}
|g(x)|$. 
 
\begin{theorem}\label{theorem:MainResult}
For every $\Delta\ge 1$ and  $\lambda>0$, there exists a
non-decreasing function
$F_{\Delta, \lambda}$ with $F_{\Delta, \lambda}(0) = 0$ that is continuously differentiable on
$(0,\infty)$, and satisfies, for any compact subset ${\mathcal K} \subset [0,1]$,
\begin{align}\label{eq:uniqueness}
&\lim_{n\rightarrow\infty}\sup\Big|\Big|F_{n,\Delta,\lambda}(\cdot|\mbx_{\partial\T_{\Delta,n}})-F_{\Delta,\lambda}(\cdot) \Big|\Big|_{[0,1]} = 0, \\
\label{eq:uniqueness2} 
&\lim_{n\rightarrow\infty}\sup\Big|\Big|\dot{F}_{n,\Delta,\lambda}(\cdot|\mbx_{\partial\T_{\Delta,n}})-
\dot{F}_{\Delta,\lambda}(\cdot) \Big|\Big|_{{\mathcal K}}=0,
\end{align}
  where the supremum is over all boundary conditions
$\mbx_{\partial\T_{\Delta,n}} \in [0,1]^{|\partial\T_{\Delta,n}|}$. 
\end{theorem}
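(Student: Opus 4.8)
The plan is to establish uniform convergence of the conditional marginal CDFs at the root by analyzing the one-step recursion that maps the density at depth $n-1$ (viewed from a child) to the density at depth $n$ (viewed from the root). First I would write down this recursion explicitly: if $f$ is a density on $[0,1]$ with CDF $F$, then passing through one layer of $\Delta$ children subject to the hardcore constraint $x_u + x_v \le 1$ yields, up to normalization, a new density proportional to $\lambda^x \big(F(1-x)\big)^{\Delta}$ on $[0,1]$. Call the induced map on CDFs $\Phi_{\Delta,\lambda}$; the conditional CDF $F_{n,\Delta,\lambda}(\cdot\,|\,\mbx_{\partial\T_{\Delta,n}})$ is obtained by $n$ iterations of this map starting from a CDF determined by the boundary data at the leaves. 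The key structural observations are: (i) the map $\Phi_{\Delta,\lambda}$ depends on the input CDF $F$ only through the values $F(1-x)$, $x\in[0,1]$, i.e. through $F$ restricted to $[0,1]$; (ii) it is monotone with respect to the pointwise (stochastic) order on CDFs, but in an order-\emph{reversing} way because of the reflection $x\mapsto 1-x$; hence $\Phi_{\Delta,\lambda}\circ\Phi_{\Delta,\lambda}$ is order-preserving. This is the place where the Hamiltonian/monotonicity machinery alluded to in the introduction enters.

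The core of the argument is a \textbf{sandwiching / contraction step}. Following the standard tree-recursion strategy, I would track the two extreme boundary conditions: the ``all-zero'' boundary (leaf spins forced to $0$, or more precisely the free-spin CDF as the base case) and the ``all-one'' boundary. Because every admissible CDF on $[0,1]$ lies pointwise between the degenerate CDF $\ind_{[1,\infty)}$ (mass at $1$, i.e. the stochastically largest) and $\ind_{[0,\infty)}$ (mass at $0$, the smallest), monotonicity of $\Phi_{\Delta,\lambda}^{2}$ forces the iterates from \emph{any} boundary condition to be squeezed between the even iterates started from these two extremes. I would then show that the gap between these upper and lower envelopes shrinks to zero: define $G_n^{\pm}$ as the iterates from the two extremes, so $G_n^- \le F_{n,\Delta,\lambda}(\cdot\,|\,\mbx_{\partial}) \le G_n^+$ pointwise after passing to a common parity; it suffices to prove $\|G_n^+ - G_n^-\|_{[0,1]}\to 0$. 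The monotone bounded sequences $G_{2n}^{\pm}$ and $G_{2n+1}^{\pm}$ converge pointwise (hence, by Dini/monotonicity and continuity of limits, uniformly on $[0,1]$) to fixed points of $\Phi_{\Delta,\lambda}^2$; the real work is to show that $\Phi_{\Delta,\lambda}^2$ has a \emph{unique} fixed point among CDFs on $[0,1]$, equivalently that $\Phi_{\Delta,\lambda}$ has a unique fixed point. This is exactly where I would invoke the ODE characterization: a fixed point $F$ of $\Phi_{\Delta,\lambda}$ satisfies, after normalization, $\dot F(x) = c\,\lambda^x F(1-x)^{\Delta}$, and differentiating and using the Hamiltonian first integral of the resulting second-order ODE (with the boundary data $F(0)=0$ and the normalization $F(1)=1$) pins down the solution uniquely via the monotonicity of the conserved quantity along trajectories.

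Once uniqueness of the fixed point is in hand, the common limit $F_{\Delta,\lambda}$ of the upper and lower envelopes is forced, giving \eqref{eq:uniqueness}; its regularity ($F_{\Delta,\lambda}(0)=0$, continuous differentiability on $(0,\infty)$, and in fact on $(0,1)$ with the value being constant equal to $1$ on $[1,\infty)$) follows from the explicit form $\dot F_{\Delta,\lambda}(x) = c\lambda^x F_{\Delta,\lambda}(1-x)^{\Delta}$ and a bootstrap: the right-hand side is a product of smooth functions of $x$ and of $F_{\Delta,\lambda}$, so smoothness of $F_{\Delta,\lambda}$ propagates. For the derivative convergence \eqref{eq:uniqueness2}, I would note that $\dot F_{n,\Delta,\lambda}(x\,|\,\mbx_{\partial}) = \lambda^x F_{n-1}(1-x\,|\,\cdot)^{\Delta}/Z_n$ where $Z_n = \int_0^1 \lambda^t F_{n-1}(1-t\,|\,\cdot)^{\Delta}\,dt$; since $F_{n-1}(\cdot\,|\,\cdot)\to F_{\Delta,\lambda}$ uniformly on $[0,1]$ by \eqref{eq:uniqueness}, and $t\mapsto\lambda^t$ is bounded and continuous, both the integrand and $Z_n$ converge uniformly in the boundary condition, so $\dot F_{n,\Delta,\lambda}(\cdot\,|\,\cdot)$ converges uniformly on any compact ${\mathcal K}\subset[0,1]$ to $\dot F_{\Delta,\lambda}(\cdot) = \lambda^{(\cdot)} F_{\Delta,\lambda}(1-\cdot)^{\Delta}/Z$. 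The main obstacle is the uniqueness-of-fixed-point step: unlike discrete hardcore, there is no finite-dimensional contraction to exploit, and one must genuinely use the Hamiltonian structure and a sensitivity analysis of the ODE to rule out multiple fixed points — this is the technical heart that replaces the usual $\lambda<\lambda_c$ dichotomy and yields uniqueness for \emph{all} $\lambda$.
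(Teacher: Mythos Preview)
Your overall architecture matches the paper's proof closely: the one-step recursion $\Phi_{\Delta,\lambda}$, its order-reversing nature, the sandwiching by extreme boundary conditions, the monotone convergence of even/odd iterates to limits $F_e, F_o$ (fixed points of $\Phi^2$), Dini's theorem for uniform convergence, and the reduction of everything to showing $F_e = F_o$. The derivation of \eqref{eq:uniqueness2} from \eqref{eq:uniqueness} via the explicit density formula is also correct and is essentially how the paper argues it.

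There is, however, a genuine logical gap at the sentence ``the real work is to show that $\Phi_{\Delta,\lambda}^2$ has a \emph{unique} fixed point among CDFs on $[0,1]$, equivalently that $\Phi_{\Delta,\lambda}$ has a unique fixed point.'' This equivalence is false: an order-reversing map can have a unique fixed point and still possess a nontrivial $2$-cycle, and each point of a $2$-cycle is a fixed point of $\Phi^2$. This is exactly what happens for the \emph{discrete} hardcore recursion when $\lambda > \lambda_c(\Delta)$ --- the scalar map has a unique (unstable) fixed point surrounded by a $2$-cycle, and that $2$-cycle is the phase transition. So proving that the single equation $\dot F(x) = c\,\lambda^x F(1-x)^\Delta$ with $F(0)=0$, $F(1)=1$ has a unique solution (uniqueness of fixed points of $\Phi$) does not suffice; you must rule out $2$-cycles, and your proposed analysis of ``a fixed point $F$ of $\Phi_{\Delta,\lambda}$'' does not do that.

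The paper closes this gap by never passing through a fixed point of $\Phi$. It works directly with the coupled pair: the limits satisfy $\dot F_o(z) = C_e\,\lambda^z F_e^\Delta(1-z)$ and $\dot F_e(z) = C_o\,\lambda^z F_o^\Delta(1-z)$ with a priori \emph{two different} normalization constants $C_o$, $C_e$. Eliminating $F_e$ gives a second-order ODE for $F_o$ whose coefficients involve $C_o C_e^{1/\Delta}$; after a change of variables this system is Hamiltonian, and evaluating the conserved quantity at $z=0^+$ and $z=1^-$ (using $F_o(0)=0$, $\dot F_o(0^+)=C_e$, $F_o(1)=1$, $\dot F_o(1^-)=0$) forces $C_e = C_o$. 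Once the constants agree, the pointwise inequality $F_o \ge F_e$ together with the coupled integral relations immediately gives $F_o = F_e$. Your Hamiltonian intuition is the right tool, but it has to be applied to the $2$-cycle system with two constants rather than to the single fixed-point equation --- that is the correction needed to make your outline complete.
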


\begin{remark}
\label{rem-mainres}
{\em  The relation  (\ref{eq:uniqueness}) of 
 Theorem~\ref{theorem:MainResult}
implies that the  cumulative distribution function of the marginal 
distribution at the root is asymptotically independent from the
boundary condition.  
In particular, the model exhibits the correlation decay property
regardless of the values of $\Delta$ and $\lambda$ (which implies no
phase transition).  
 In fact, it follows from
Theorem~\ref{theorem:MainResult}  
that there exists a unique Gibbs
measure on the infinite $(\Delta+1)$-regular tree  
and that this measure is translation invariant and its marginal distribution function at any
node  is equal to $F_{\Delta, \lambda}$. 
Relation (\ref{eq:uniqueness2}) shows that the decay of correlations property extends to the marginal density.
}
\end{remark}

Next, we  provide a more explicit characterization of the marginal 
distribution function $F_{\Delta, \lambda}$ in the special case
$\lambda = 1$, 
which is the quantity of interest for computing the
volume of the polytope $\Pol (\G)$.  We show that this  limit is the unique
solution to a certain  first-order ODE.

\begin{theorem}\label{th:DiffEq}
For $\lambda = 1$ and $\Delta \geq 1$, there exists a unique $C =C_{\Delta, 1} > 0$ such that   the 
ODE 
\begin{equation}
\label{ODE}
  \dot{F}(z) =  C ( 1 - F^{\Delta +1} (z))^{\Delta/(\Delta+1)},
  \quad z 
  \in (0,\infty), 
\end{equation}
with boundary conditions 
\begin{equation}
\label{bc}
  F (0) = 0  \quad \mbox{ and } \quad \inf \{t > 0:  F(t) = 1 \} = 1, 
\end{equation}
has a solution.  Moreover, the ODE {\eqref{ODE}-\eqref{bc}} with $C =
C_{\Delta,1}$ has a unique solution $\fstar_{\Delta,1}$.
Furthermore, $\fstar_{\Delta,1}=F_{\Delta, 1}$, where
$F_{\Delta,1}$ is the limit distribution function of Theorem
\ref{theorem:MainResult}. 
\end{theorem}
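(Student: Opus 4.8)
The plan is to split the statement into two parts: (a) the existence and uniqueness of the constant $C_{\Delta,1}$ and of the solution of \eqref{ODE}--\eqref{bc}, which follows from elementary ODE arguments, and (b) the identification $\fstar_{\Delta,1}=F_{\Delta,1}$, which is the substantive part and proceeds by passing to the limit in the tree recursion and then exploiting a conserved quantity. For (a), fix $C>0$ and consider the initial value problem $\dot F=C(1-F^{\Delta+1})^{\Delta/(\Delta+1)}$, $F(0)=0$. The right-hand side equals $C>0$ at $F=0$ and is locally Lipschitz in $F$ on $[0,1)$, so there is a unique strictly increasing solution with range in $[0,1)$, and by separation of variables it satisfies $\int_0^{F(z)}(1-s^{\Delta+1})^{-\Delta/(\Delta+1)}\,ds=Cz$. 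Setting
\[
G_\Delta\mean\int_0^1(1-s^{\Delta+1})^{-\Delta/(\Delta+1)}\,ds,
\]
and using $1-s^{\Delta+1}=(1-s)(1+s+\cdots+s^\Delta)$ together with $\Delta/(\Delta+1)<1$, one checks $G_\Delta\in(0,\infty)$; hence $F$ attains the value $1$ precisely at $z=G_\Delta/C$, and the only non-decreasing $C^1$ extension to $(0,\infty)$ with range in $[0,1]$ is $F\equiv1$ on $[G_\Delta/C,\infty)$ (once $F=1$ it must stay there by monotonicity). Thus a solution of \eqref{ODE}--\eqref{bc} exists iff $G_\Delta/C=1$, so $C_{\Delta,1}\mean G_\Delta$ is the unique admissible constant and, for this $C$, the solution is uniquely determined on $[0,1]$ by Picard's theorem and forced to equal $1$ on $[1,\infty)$; call it $\fstar_{\Delta,1}$.

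For (b), recall the tree recursion (\eqref{eq:FnIteration} with $\mu=\meas_1$, noting that $\meas_1$ is Lebesgue measure on $[0,1]$ so there is no $\lambda^x$ factor): conditionally on the boundary spins, the root of $\T_{n,\Delta}$ has $\Delta$ children whose subtrees are copies of $\T_{n-1,\Delta}$, and the conditional marginal density at the root is proportional to $\prod_{i=1}^\Delta F_{n-1,\Delta,1}(1-x\mid\cdot)$, so
\[
\dot F_{n,\Delta,1}(x\mid\mbx_{\leaves})=\frac{\prod_{i=1}^\Delta F_{n-1,\Delta,1}(1-x\mid\cdot)}{\int_0^1\prod_{i=1}^\Delta F_{n-1,\Delta,1}(1-x'\mid\cdot)\,dx'},\qquad x\in[0,1].
\]
Theorem~\ref{theorem:MainResult} gives $F_{n-1,\Delta,1}(\cdot\mid\cdot)\to F_{\Delta,1}$ uniformly on $[0,1]$ and $\dot F_{n,\Delta,1}(\cdot\mid\cdot)\to\dot F_{\Delta,1}$ uniformly on $[0,1]$, uniformly over boundary conditions; moreover $F_{\Delta,1}$, being a uniform limit of distribution functions of $[0,1]$-valued variables, satisfies $F_{\Delta,1}(0)=0$, $F_{\Delta,1}(1)=1$, $F_{\Delta,1}\equiv1$ on $[1,\infty)$, and is $C^1$ on $(0,\infty)$. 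Passing to the limit (using the substitution $s=1-x'$ in the denominator) yields
\[
\dot F_{\Delta,1}(x)=C_0\,F_{\Delta,1}(1-x)^\Delta,\qquad x\in(0,1),\qquad C_0\mean\Big(\int_0^1F_{\Delta,1}(s)^\Delta\,ds\Big)^{-1}\in(0,\infty).
\]

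Next I would exploit the Hamiltonian structure. Set $a(x)=F_{\Delta,1}(x)$ and $b(x)=F_{\Delta,1}(1-x)$ on $[0,1]$; then $\dot a=C_0b^\Delta$ and $\dot b=-\dot F_{\Delta,1}(1-x)=-C_0a^\Delta$, so $\frac{d}{dx}(a^{\Delta+1}+b^{\Delta+1})=(\Delta+1)(a^\Delta\dot a+b^\Delta\dot b)=0$ on $(0,1)$, and by continuity, evaluating at $x=0$,
\[
F_{\Delta,1}(x)^{\Delta+1}+F_{\Delta,1}(1-x)^{\Delta+1}=1,\qquad x\in[0,1].
\]
Substituting $F_{\Delta,1}(1-x)^\Delta=(1-F_{\Delta,1}(x)^{\Delta+1})^{\Delta/(\Delta+1)}$ into the functional equation shows $F_{\Delta,1}$ solves \eqref{ODE} with $C=C_0$ on $(0,1)$, hence on $(0,\infty)$ since $F_{\Delta,1}\equiv1$ on $[1,\infty)$; the condition $F_{\Delta,1}(0)=0$ is immediate, and if $F_{\Delta,1}(x)=1$ then the conservation identity forces $F_{\Delta,1}(1-x)=0$, while $\dot F_{\Delta,1}(0^+)=C_0>0$ makes $F_{\Delta,1}$ strictly positive on $(0,1]$, so $x=1$ and \eqref{bc} holds. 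Thus $F_{\Delta,1}$ solves \eqref{ODE}--\eqref{bc} with $C=C_0$; by uniqueness of the admissible constant from part (a), $C_0=C_{\Delta,1}$, and then by uniqueness of the solution, $F_{\Delta,1}=\fstar_{\Delta,1}$.

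The main obstacle is the combination of (i) passing rigorously to the limit in the tree recursion — which is made possible precisely by the uniform convergence supplied by Theorem~\ref{theorem:MainResult} — and (ii) recognizing $F_{\Delta,1}(x)^{\Delta+1}+F_{\Delta,1}(1-x)^{\Delta+1}$ as a first integral of the Hamiltonian system governing $(a,b)$; it is this conserved quantity that converts the nonlocal relation $\dot F_{\Delta,1}(x)=C_0 F_{\Delta,1}(1-x)^\Delta$ into the autonomous ODE \eqref{ODE}. The remaining analysis is routine, the only delicate point being that the right-hand side of \eqref{ODE} fails to be Lipschitz at $F=1$, which causes no trouble since the relevant solution reaches $1$ monotonically from below.
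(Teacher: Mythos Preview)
Your proof is correct, and for part (a) it is considerably more direct than the paper's. The paper establishes uniqueness of the constant via Proposition~\ref{prop:Funique}, which involves a fairly elaborate sensitivity analysis of the parameterized family $C\mapsto F_C$: differentiating in $C$, solving a linear variational ODE, invoking the implicit function theorem to show $C\mapsto\sigma_C(1/n)$ is $C^1$ and decreasing, and then patching together estimates near $F=1$ to conclude that $C\mapsto\tau_C$ is strictly decreasing and continuous. Your separation-of-variables computation bypasses all of this: since $\int_0^{F_C(z)}(1-s^{\Delta+1})^{-\Delta/(\Delta+1)}\,ds=Cz$ and the integral $G_\Delta$ is finite, one reads off $\tau_C=G_\Delta/C$ immediately, so $C_{\Delta,1}=G_\Delta$ is explicit. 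This is a genuine simplification; the paper's approach would be needed only if the ODE were not separable (as in the general-$\lambda$ second-order equation of Conjecture~\ref{conjecture:DiffEq}).

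For part (b) your argument is essentially the paper's, specialized to $\lambda=1$ and presented more transparently. The paper obtains the functional relation $\dot F_{\Delta,1}(x)=C_0\,F_{\Delta,1}(1-x)^\Delta$ as the $C_o=C_e$ case of Corollary~\ref{cor:Limits}, and then extracts the autonomous ODE from the conserved quantity $R_\lambda$ of Proposition~\ref{prop-hamiltonian}; for $\lambda=1$ that conserved quantity reduces (after stripping constants) exactly to your $a^{\Delta+1}+b^{\Delta+1}$. One small imprecision: you invoke uniform convergence of $\dot F_{n,\Delta,1}(\cdot\mid\cdot)$ on all of $[0,1]$, whereas Theorem~\ref{theorem:MainResult} and Corollary~\ref{cor:Limits} give it only on compact subsets of $(0,1)$. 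This is harmless, since the limiting relation $\dot F_{\Delta,1}(x)=C_0\,F_{\Delta,1}(1-x)^\Delta$ on $(0,1)$ is already recorded as \eqref{relationFo}--\eqref{relationFe} (with $F_o=F_e$, $C_o=C_e$), and that is all you use.
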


The proof of  Theorem  \ref{th:DiffEq} is given in   Section
\ref{eq:DE-Uniqueness}.  
In fact, we believe a
generalization is possible to all $\lambda > 0$. 
 Specifically, as stated in  Conjecture
\ref{conjecture:DiffEq} at the end of Section \ref{eq:DE-Uniqueness},  we
believe $F_{\Delta, \lambda}$ also admits a characterization in terms of 
a differential equation, although a more complicated second-order
non-linear differential equation, but we defer the validation of 
such a conjecture to future work.

The  behavior  of the continuous hardcore model described above should be contrasted with
that of the discrete hardcore model 
for which, as discussed in the introduction, the phase transition
point on a $(\Delta+1)$-regular tree is
$\lambda_c=\Delta^\Delta/(\Delta-1)^{\Delta+1}$.   In particular, when
$\Delta \geq 5$, $\lambda_c< 1$ and so the discrete hardcore model on the tree with $\lambda
= 1$ admits multiple Gibbs measures.  
This raises the natural question as to what happens for the 
$\ve$-interpolated model, with free spin measure $\nu^\ve_1$, as in
\eqref{freespin-epschc}.   It is natural
to expect that this model would 
 behave just like the standard hardcore model with $\lambda = 1$ 
for sufficiently small $\ve$.   Somewhat
surprisingly, we show that this is not the case.    By establishing a
 correlation decay
property similar to that described in Remark \ref{rem-mainres}, 
in Theorem \ref{theorem:MainResult2} we show that there is a unique Gibbs measure
for the $\ve$-interpolated model for every positive $\ve$, no matter
how small.

\begin{theorem}\label{theorem:MainResult2} 
For every $\Delta\ge 1$,  and $\ve \in (0,1/2)$, let 
$F_{n,\Delta}^{(\ve)}$ denote the cumulative distribution of the marginal
of the Gibbs measure $\P_{\mathbb{T}_{\Delta,n}, \nu^\ve_1}$ 
at the root
of $\mathbb{T}_{\Delta,n}$.   Then 
there exists a non-decreasing continuous function 
$F_{\Delta}^{(\ve)}$ with $F_{\Delta}^{(\ve)}(z) = 0$ for $z \leq 0$, $F_{\Delta}^{(\ve)}(z) = 1$ for $z \geq 1$, that satisfies 
\begin{align}\label{eq:uniqueness-eps}
&\lim_{n\rightarrow\infty}\sup\Big|\Big|F_{n,\Delta}^{(\ve)}(\cdot|\mbx_{\partial\T_{\Delta,n}})-F_{\Delta}^{(\ve)}(\cdot)
\Big|\Big|_{[0,1]} = 0, 
\end{align}
  where the supremum is over all boundary conditions
$\mbx_{\partial\T_{\Delta,n}} \in [0,1]^{|\partial\T_{\Delta,n}|}$. 
\end{theorem}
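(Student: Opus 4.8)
\textbf{Proof proposal for Theorem~\ref{theorem:MainResult2}.}
The plan is to follow the same strategy that works for the continuous hardcore model in Theorem~\ref{theorem:MainResult}, adapting the recursive map analysis to the $\ve$-truncated free spin measure $\nu^\ve_1$. First I would write down the tree recursion for the conditional marginal at the root: conditioning on a boundary $\mbx_{\partial\T_{\Delta,n}}$, the conditional density $f^{(\ve)}_{n,\Delta}(\cdot|\mbx_{\partial})$ at the root of $\T_{\Delta,n}$ is obtained from the $\Delta$ conditional densities at the roots of the $\Delta$ depth-$(n-1)$ subtrees by the standard hardcore convolution-type recursion (this is the $\mu = \nu^\ve_1$ specialization of \eqref{eq:FnIteration}). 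Concretely, if $G_1,\dots,G_\Delta$ are the cumulative distribution functions feeding into a node, the outgoing cdf at $x\in[0,1]$ is proportional to $\int_0^x \big(\prod_{j=1}^\Delta G_j(1-t)\big)\,\nu^\ve_1(dt)$, normalized so that the total mass is $1$. Because $\nu^\ve_1$ is supported on $[0,\ve]\cup[1-\ve,1]$, this recursion immediately forces the outgoing distribution to be supported on the same set, so all iterates live in the compact convex set of cdf's that are constant (at $0$ or at its eventual value) on $(\ve,1-\ve)$; this is the structural fact that distinguishes the analysis from the discrete case and is what ultimately prevents a phase transition.

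The second step is to set up the contraction/monotonicity argument on this restricted family. I would show the recursion operator is monotone with respect to the stochastic (pointwise) ordering of cdf's on $[0,1]$, so that iterating from the two extreme boundary conditions (all spins $0$, giving the largest root spin, versus all spins at the maximal feasible value, giving the smallest) sandwiches every other boundary condition. The key quantitative claim is that the gap between the upper and lower iterates contracts geometrically: because $\nu^\ve_1$ puts all its mass away from the midpoint $1/2$, the factor $\prod_j G_j(1-t)$ is evaluated only at arguments $1-t \in [0,\ve]\cup[1-\ve,1]$, and the ``dangerous'' region where $G_j(1-t)$ could be close to both $0$ and sensitive — near $1-t \approx 1/2$, which is where the discrete model's bifurcation lives — is excluded. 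So one obtains a uniform Lipschitz bound, with constant strictly less than $1$ depending on $\ve$ and $\Delta$, for the map from the pair (mass on $[0,\ve]$, mass on $[1-\ve,1]$) — or more precisely for the induced map on the effectively finite set of ``profile parameters'' that determine the cdf on the two intervals. Convergence of the extreme iterates to a common limit $F^{(\ve)}_\Delta$ then follows by completeness, and uniformity over boundary conditions follows from the sandwiching. The limit is non-decreasing, continuous except possibly for the jump structure allowed by $\nu^\ve_1$; I would verify continuity and the stated boundary behavior $F^{(\ve)}_\Delta(z)=0$ for $z\le 0$, $=1$ for $z\ge 1$ directly from the fixed-point equation.

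I expect the main obstacle to be making the contraction estimate genuinely uniform and establishing that the relevant operator really does act on a finite-dimensional (or at least nicely parametrized) object rather than an infinite-dimensional function space — in other words, verifying that the density on each of the two sub-intervals $[0,\ve]$ and $[1-\ve,1]$ is determined by the recursion in a way that closes up, so that the problem reduces to a low-dimensional fixed-point problem to which elementary contraction arguments apply. A secondary subtlety is handling the normalization constant $Z$ in the recursion: one must check that $Z$ stays bounded away from $0$ uniformly in $n$ and in the boundary condition, which uses the observation (already noted after \eqref{def-hcZ}) that the all-small configuration has positive weight, here that $\nu^\ve_1[0,\ve]>0$ always. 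Once those are in place, the contraction constant can be taken to depend only on $\Delta$ and $\ve$ (degenerating to $1$ as $\ve\downarrow 0$, consistent with the expected phase transition at $\ve=0$), and the theorem follows. Why the same argument does not extend to general $\lambda$: the free spin density $\lambda^x$ breaks the symmetry that keeps the two interval-profiles in a stable balance, and the reduction to a tractable low-dimensional recursion appears to fail, which is why that case is left open.
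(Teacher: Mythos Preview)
Your proposal has a genuine gap: the heart of your argument is the claim that the recursion contracts with a Lipschitz constant strictly less than $1$ depending on $(\ve,\Delta)$, but you give no mechanism for establishing this, and in fact the intuition you offer for it is backwards. You say the ``dangerous'' region near $1/2$ is excluded because $\nu^\ve_1$ is supported on $[0,\ve]\cup[1-\ve,1]$; but the discrete hardcore model --- which \emph{does} exhibit a phase transition --- has spins at $\{0,1\}$, not near $1/2$. The $\ve$-model concentrates mass precisely where the discrete model lives, so excluding the middle is not what rescues uniqueness. Relatedly, your hope that the recursion closes up on a finite-dimensional set of ``profile parameters'' is not substantiated: the densities on $[0,\ve]$ and $[1-\ve,1]$ are genuinely infinite-dimensional, and the paper explicitly remarks (Section~\ref{subs-discussion}) that a direct contraction approach ``appears unsuitable'' here.

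The paper's proof takes a completely different route. It does use the monotonicity/sandwiching you describe to define the even and odd limits $F_e,F_o$ (Corollary~\ref{coro:Monotonicity}, \eqref{FoFe}), but instead of contraction it derives from the coupled integral equations \eqref{relationFo}--\eqref{relationFe} a second-order ODE for $F_o$ on each of $(0,\ve)$ and $(1-\ve,1)$ (Proposition~\ref{prop-ode2}), and then identifies a \emph{Hamiltonian invariant} $R_\lambda$ (Proposition~\ref{prop-hamiltonian}) that is constant on each interval. The crucial step specific to $\lambda=1$ is that the constancy of $F_o,F_e$ on the gap $(\ve,1-\ve)$ forces $R_1(\ve-)=R_1((1-\ve)+)$, so the invariant matches across the gap and the boundary values $R_1(0+),R_1(1-)$ coincide; this algebraically forces $C_o=C_e$, hence $F_o=F_e$ by Remark~\ref{rem-whatsleft}. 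This also explains transparently why general $\lambda$ is harder: for $\lambda\neq 1$ the relation between $\dot F_o(\ve-)$ and $\dot F_o((1-\ve)+)$ picks up a factor $\lambda^{2\ve-1}$, and the invariant no longer matches across the gap.
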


We now turn to the implications of our results for volume computation.
Specifically, applying Theorem~\ref{theorem:MainResult},  
we are able to compute
asymptotically the volume of the  LP polytope associated
with any  regular graph that is locally tree-like (that is, with large
girth). The proof of Theorem \ref{theorem:MainResultRegularGraph} is given in
Section  \ref{subs-pf-Regulargraph}.

\begin{theorem}\label{theorem:MainResultRegularGraph}
Fix $\lambda > 0$ and $\Delta \geq 1$, and let $F_{\Delta, \lambda}$
be as in Theorem \ref{theorem:MainResult}. 
Let $\G_n, n\ge 1,$ be a sequence of $\Delta$-regular graphs with
$g(\G_n)\rightarrow\infty$,  
and let $Z_{\G_n,\lambda}$ be the associated partition function as 
defined by (\ref{def-hcZ}) with $\mu = \mu_\lambda$ in
\eqref{def-hcmeas} and $\Pol = \Pol (\G_n)$.  Then 
\begin{align}
\lim_{n\rightarrow\infty}{\ln Z_{\G_n,\lambda}\over |V(\G_n)|}
&=
-\ln\int_{0\le x\le 1}\lambda^x F^\Delta_{\Delta-1,\lambda}(1-x)dx \notag \\
&\qquad -{\Delta\over 2}\ln\int_{0\le x\le 1}\dot F_{\Delta-1,\lambda}(x)F_{\Delta-1,\lambda}(1-x)dx. \label{eq:VolumeLimit}
\end{align}
\end{theorem}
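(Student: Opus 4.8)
The plan is to prove \eqref{eq:VolumeLimit} as a \emph{Bethe free energy} identity, following the scheme of \cite{BandyopadhyayGamarnikCounting} but with the correlation decay supplied by Theorem~\ref{theorem:MainResult} replacing the discrete contraction estimates used there. Write $N=|V(\G_n)|$, abbreviate $F:=F_{\Delta-1,\lambda}$, and set $\Psi_\lambda(s):=\int_0^{1-s}\lambda^t\,dt$ for $s\in[0,1]$. By the tree recursion underlying Theorem~\ref{theorem:MainResult} (see \eqref{eq:FnIteration}), $F$ is the ``cavity'' distribution function of the $\Delta$-regular tree: its density satisfies $\dot F(x)=\lambda^x F(1-x)^{\Delta-1}/c$ with $c:=\int_0^1\lambda^t F(1-t)^{\Delta-1}\,dt$. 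A short Fubini computation identifies the two integrals on the right of \eqref{eq:VolumeLimit} with, respectively, $\E\big[\Psi_\lambda\big(\max_{1\le i\le\Delta}Y_i\big)\big]$ and $\P(Y+Y'\le 1)$, where $Y_1,\dots,Y_\Delta$ (resp.\ $Y,Y'$) are i.i.d.\ with density $\dot F$; these are the per-vertex and per-edge constants of the Bethe free energy of the $\Delta$-regular tree model, and \eqref{eq:VolumeLimit} asserts convergence of $\tfrac1N\ln Z_{\G_n,\lambda}$ to that Bethe value.

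The first step is the vertex-telescoping identity. Fix an ordering $v_1,\dots,v_N$ of $V(\G_n)$, let $H_k$ be the subgraph induced on $\{v_1,\dots,v_k\}$, and put $S_k:=\calN_{\G_n}(v_k)\cap\{v_1,\dots,v_{k-1}\}$. Integrating out the spin $x_{v_k}$ — whose only active constraints in $H_k$ are $x_{v_k}+x_u\le1$, $u\in S_k$ — gives $Z_{H_k}/Z_{H_{k-1}}=\E_{\P_{H_{k-1}}}\big[\Psi_\lambda(\max_{u\in S_k}X_u)\big]$ (with the empty-$S_k$ convention $\Psi_\lambda(0)=\int_0^1\lambda^t\,dt$), so that $\ln Z_{\G_n,\lambda}=\sum_{k=1}^N\ln\E_{\P_{H_{k-1}}}\big[\Psi_\lambda(\max_{u\in S_k}X_u)\big]$, and each summand lies in a fixed compact interval (bounded above by $\ln\Psi_\lambda(0)$ and below by $\ln\big(\Psi_\lambda(1/2)\,\P_{H_{k-1}}(X_u\le 1/2\ \forall u\in S_k)\big)$, the last probability being bounded away from $0$ uniformly). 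The structural input is the large girth: for distinct $u,u'\in S_k$, any path from $u$ to $u'$ in $H_{k-1}$ closes up with the path $u,v_k,u'$ into a cycle of $\G_n$, so $\dist_{H_{k-1}}(u,u')\ge g(\G_n)-2$, and for $R<g(\G_n)/2$ every ball $B_{H_{k-1}}(u,R)$ is a tree. Together with the \emph{uniform-in-boundary-condition} convergence in Theorem~\ref{theorem:MainResult} — transferred from $\T_\Delta$ to a general locally tree-like graph by conditioning on a separating sphere lying in a tree-like region and invoking the spatial Markov property — this shows that, up to an error $o(1)$ as $g(\G_n)\to\infty$ uniform in $k$, the spins $(X_u)_{u\in S_k}$ under $\P_{H_{k-1}}$ are independent, each $X_u$ having a marginal determined only by the depth-$R$ tree around $u$ inside $H_{k-1}$.

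The central step is to regroup the telescoping sum so that the emerging constants are exactly the two Bethe constants. This is delicate because the intermediate graphs $H_{k-1}$ are not $\Delta$-regular, so the marginals above need not equal $\dot F$. Following \cite{BandyopadhyayGamarnikCounting}, one separates each increment into the contribution that ``activates'' the vertex $v_k$ and the contributions that ``activate'' the $\Delta$ edges at $v_k$ — concretely, by interleaving the vertex-telescoping with an edge-telescoping, or by averaging over the ordering and integrating the incremental free energy along the edge-dilution path — so that after reorganization $\ln Z_{\G_n,\lambda}$ equals, up to $o(N)$, the sum of $N$ identical vertex terms built from $\ln\E\big[\Psi_\lambda(\max_{1\le i\le\Delta}Y_i)\big]$ and $\Delta N/2$ identical edge terms built from $\ln\P(Y+Y'\le1)$, all $Y$'s i.i.d.\ $\sim\dot F$. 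One must also check that the vertices and edges whose relevant local environment in the pertinent intermediate graph fails to be a piece of the $\Delta$-regular tree number only $o(N)$; here the uniform form of Theorem~\ref{theorem:MainResult} is what forces each ``good'' vertex/edge to contribute precisely the tree constant, irrespective of the tree-like environment attached to it. Dividing by $N$, letting $n\to\infty$ and then $R\to\infty$, and rewriting the two constants through Fubini and $\dot F(x)=\lambda^x F(1-x)^{\Delta-1}/c$, gives \eqref{eq:VolumeLimit}.

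The main obstacle is this reorganization: the raw telescoping is governed by the irregular intermediate graphs, and only after carefully matching each vertex with ``its'' $\Delta/2$ edges — and absorbing the $o(N)$ atypical vertices into the error — does the Bethe free energy appear with the correct combinatorial weights; making the error bounds uniform over orderings (or carrying out the dilution interpolation cleanly) is the technical heart of the argument. A secondary, more routine point is the transfer of Theorem~\ref{theorem:MainResult} from the regular tree to general locally tree-like graphs used in the decoupling step, which follows from the uniformity already present in that theorem but needs to be made explicit.
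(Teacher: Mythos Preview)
Your Bethe free energy interpretation of the right-hand side of \eqref{eq:VolumeLimit} is correct, and the identification of the two integrals with $\E[\Psi_\lambda(\max_i Y_i)]$ and $\P(Y+Y'\le 1)$ is a useful observation. However, the proof scheme has a genuine gap at exactly the point you flag as the ``main obstacle,'' and the resolution you sketch does not work as stated.

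The difficulty is structural. In your vertex-telescoping, the intermediate graphs $H_{k-1}$ are highly irregular: a vertex $u\in S_k$ has, in $H_{k-1}$, only those of its $\Delta$ neighbors that precede it in the ordering. Consequently the depth-$R$ tree around $u$ in $H_{k-1}$ is a \emph{random subtree} of $\T_{R,\Delta-1}$, and the marginal of $X_u$ is governed by that subtree, not by $F=F_{\Delta-1,\lambda}$. Your claim that ``the vertices and edges whose relevant local environment \ldots fails to be a piece of the $\Delta$-regular tree number only $o(N)$'' is false for any ordering: for a typical $k$, a constant fraction of the $\Delta$-regular neighborhood of $u$ is missing. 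Even granting (incorrectly) that all marginals equal $\dot F$, the sum $\sum_k \ln \E[\Psi_\lambda(\max_{i\le d_k}Y_i)]$ with $d_k=|S_k|$ depends on the full distribution of the $d_k$'s, not only on $\sum_k d_k=\Delta N/2$, since $j\mapsto \ln\E[\Psi_\lambda(\max_{i\le j}Y_i)]$ is not affine. The ``reorganization'' you invoke is therefore doing real work that is nowhere supplied, and attributing it to \cite{BandyopadhyayGamarnikCounting} is a misreading: that paper does \emph{not} vertex-telescope.

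The paper's argument is designed precisely to avoid this. It uses the \emph{rewiring} operation of \cite{BandyopadhyayGamarnikCounting}: at each step one deletes two far-apart vertices $u_1,u_2$ and adds $\Delta$ edges between their former neighbors, so every intermediate graph remains $\Delta$-regular with girth at least $g$. This makes each cavity tree an exact copy of $\T_{g-1,\Delta-1}$, so Theorem~\ref{theorem:MainResult} applies directly and uniformly. The single-step ratio $Z_\G/Z_\Hg$ then factors (Lemmas~\ref{lem-ratio1}--\ref{lem-ratio2}) into two vertex-removal pieces and $\Delta$ edge-addition pieces, giving exactly the constant $\Gamma(\Delta,\lambda)$ of the Bethe formula; telescoping over $N/2-O(1)$ rewiring steps yields \eqref{eq:VolumeLimit}. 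In short, rewiring \emph{is} the reorganization you are looking for: it packages one vertex term and $\Delta/2$ edge terms into a single increment while keeping the ambient graph regular, so the cavity distribution is always $F_{\Delta-1,\lambda}$ and no averaging over orderings or dilution interpolation is needed.
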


Combining Theorem \ref{theorem:MainResultRegularGraph} with
Theorem \ref{th:DiffEq}, we see that in the special case $\lambda=1$, the volume $Z_{\G_n,1}$ of the polytope
$\mathcal{P}(\G_n)$ satisfies 
\begin{align*}
\lim_{n\rightarrow\infty}{\ln Z_{\G_n,1}\over |V(\G_n)|}=\gamma,
\end{align*}
where $\gamma$, 
 which stands for the right-hand side of (\ref{eq:VolumeLimit}) with
 $\lambda = 1$, 
takes the form 
\begin{align*} 
\gamma &= - \ln \int_0^1  \fstar_{\Delta-1, 1}^\Delta (1-x) dx \\
& \qquad - \frac{\Delta}{2} \ln \int_0^1 \left( 1  -
  \fstar_{\Delta  -1, 1}^{\Delta  -1} (x) \right)^{\frac{\Delta-1}{\Delta}} 
\fstar_{\Delta-1,  1} (1-x) \, dx, 
\end{align*}
where $\fstar_{\Delta-1,1}$ is the unique solution to
\eqref{ODE}-\eqref{bc} with $C = C_{\Delta,1}$, as identified in Theorem \ref{th:DiffEq}

 This result provides a fairly explicit expression for the exponential limit of the volume of such a  polytope, via 
the solution $\fstar_{\Delta-1,  1}$ of the ODE,  which can be computed, for example, numerically.
A  similar expression for general $\lambda$ would be obtained if  Conjecture~\ref{conjecture:DiffEq}
were shown to be valid.

\section{Analysis of Continuous hardcore Models}\label{section:ProofHardCore}
For ease of exposition, we fix $\Delta \geq 1$ and for each $n \geq 1$, use the notations $\T_n$ and $\Pol_n$ in place
of $\T_{n,\Delta}$ and $\Pol (\T_{n,\Delta})$, respectively.  
Also,  in order to present a unified 
proof of Theorems \ref{theorem:MainResult} and
\ref{theorem:MainResult2} to the extent possible, we will first fix 
any spin measure $\fmeas$ that is absolutely continuous with
respect to Lebesgue measure, let $\rn$ denote its density, and let 
$F_n$ and  $f_n$, respectively, 
denote the cumulative distribution function and density of the marginal at the root node of
the hardcore model on $\T_n$ with free spin measure $\fmeas$.  Also,
in analogy with the definitions in Section \ref{sec-mainres}, let 
$F_n (\cdot|\mbx_{\partial \T_n})$ and,  for $n \geq
  2$, $f_n(\cdot|\mbx_{\partial \T_n})$, denote the corresponding
  conditional distribution functions and density given the boundary
  condition $\mbx_{\partial \T_n} \in [0,1]^{|\partial \T_n|}$. 
Also,  let  $Z_n$ denote the corresponding hardcore partition function
\eqref{def-hcZ}. 

 The proof of Theorem
\ref{theorem:MainResult} entails several steps. First, in Section \ref{subs-monot} we 
 establish a monotonicity result, which allows one to only consider 
the  cases when the boundary condition $\mbx_{\partial\T_{n}}$ is the
vector of  zeros or  is the vector of ones.  
 Then in Sections 
\ref{subs-densityevol} and  \ref{subs-denslimits} we derive iterative
formulas for $F_{2n}$ and $F_{2n+1}$ and show that each of these
sequences is pointwise monotonic in $n$, and thus converge to limiting
functions $F_e$ and $F_o$, respectively.  
In Section \ref{subs-diffeq}, we characterize $F_e$ and $F_o$ in terms
of certain  ODEs, and also identify a certain Hamiltonian
structure that leads to an invariance property 
in the particular case of the continuous and
$\ve$-interpolated 
models.    Finally, in Section \ref{subs-pf}, we  use this invariance
property to prove   Theorems  \ref{theorem:MainResult}  and 
\ref{theorem:MainResult2}.

\subsection{Monotonicity property}
\label{subs-monot}

Given a  spin measure $\mu$, let $\mbzero_{\partial\T_{n}}$ and $\mbone_{\partial\T_{n}}$,
respectively, be the boundary condition 
corresponding to setting the values for the leaves
of $\T_n$ to be all zeros and all ones.   
In Lemma \ref{lemma:monotonicity} we state a monotonicity property 
for general models, having discrete or continuous free spin measure. 
This property  is well known for the special case of the standard (two-state) hardcore
model, and was further extended in~\cite[Lemma 2.2]{galvin2011multistate} 
to  the multi-state hardcore model with free-spin measure
$\meas_\lambda^{M+1}$   in \eqref{freespin-multihc} for any integer $M
\geq 1$.  
For completeness, the proof of Lemma \ref{lemma:monotonicity} is provided in 
Appendix \ref{ap-monotonicity}. 

\begin{lemma}\label{lemma:monotonicity}
For $n \geq 1$, every boundary condition $\mbx_{\partial\T_n}$ and every $\ex\in [0,1]$,
\begin{align*}
F_{n}(\ex|\mbzero_{\partial\T_{n}})\ge
F_{n}(\ex|\mbx_{\partial\T_n})\ge
F_{n}(\ex|\mbone_{\partial\T_{n}}), \\
 \end{align*}
 when $n$ is even and
 \begin{align*}
F_n(\ex|\mbzero_{\partial\T_{n}})\le  F_n(\ex|\mbx_{\partial\T_n})\le F_n(\ex|\mbone_{\partial\T_{n}}),
\end{align*}
when $n$ is odd.
\end{lemma}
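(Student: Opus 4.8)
The plan is to argue by induction on the depth $n$, using the recursive description of the conditional law of the root spin. Let $r$ be the root of $\T_n$, let $c_1,\dots,c_\Delta$ be its children, and for each $i$ let $\T_n^{(i)}$ denote the subtree of $\T_n$ rooted at $c_i$; this subtree is a copy of $\T_{n-1}$, and we write $\mbx^{(i)}$ for the restriction of $\mbx_{\partial\T_n}$ to its leaves. Conditioning on $x_r=t$ makes the configurations on the $\T_n^{(i)}$ conditionally independent, each subject only to the single extra constraint $x_{c_i}\le 1-t$, and this yields the familiar recursion (a form of \eqref{eq:FnIteration}): for $n\ge 2$,
\[
 f_n\big(t\mid\mbx_{\partial\T_n}\big)=\frac{\rn(t)\,\prod_{i=1}^{\Delta}F_{n-1}\big(1-t\mid\mbx^{(i)}\big)}{\displaystyle\int_0^1 \rn(s)\,\prod_{i=1}^{\Delta}F_{n-1}\big(1-s\mid\mbx^{(i)}\big)\,ds},\qquad t\in[0,1],
\]
where the partition functions of the subtrees cancel between numerator and denominator; for $n=1$ the conditional law of $X_r$ is simply $\mu$ restricted to $[0,\,1-\max_i x_{c_i}]$ and renormalized (equal to $\delta_0$ when $\max_i x_{c_i}=1$).

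The key point is that ordinary stochastic (pointwise-CDF) ordering is \emph{not} preserved by this nonlinear recursion — replacing the $F_{n-1}(\cdot\mid\cdot)$ by pointwise-larger CDFs need not move $F_n$ monotonically — so I would strengthen the assertion and carry through the induction the following refinement: for every boundary condition $\mbx_{\partial\T_n}$, each of the ratios $z\mapsto F_n(z\mid\mbx_{\partial\T_n})/F_n(z\mid\mbzero_{\partial\T_n})$ and $z\mapsto F_n(z\mid\mbone_{\partial\T_n})/F_n(z\mid\mbx_{\partial\T_n})$ is monotone on $(0,1)$, with the direction of monotonicity determined by the parity of $n$; equivalently, $F_n(\cdot\mid\mbx_{\partial\T_n})$ lies between $F_n(\cdot\mid\mbzero_{\partial\T_n})$ and $F_n(\cdot\mid\mbone_{\partial\T_n})$ in the reverse hazard rate order, and this order reverses when $n$ increases by one. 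Since the reverse hazard rate order refines the ordinary stochastic order, this stronger statement implies the two inequalities of the lemma. The base case $n=1$ is a direct computation from the truncation description above: the CDF of $\mu$ truncated to $[0,1-M]$ and renormalized is, at a fixed argument, nondecreasing in $M$, and a ratio of two such truncated and renormalized CDFs is constant up to the smaller truncation point and monotone thereafter.

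For the inductive step, assume the refined statement holds at depth $n-1$, fix a boundary condition on $\T_n$, and note that $f_n(\cdot\mid\mbx_{\partial\T_n})/f_n(\cdot\mid\mbzero_{\partial\T_n})$ equals a positive constant (the ratio of the two normalizers) times $\prod_{i=1}^{\Delta}F_{n-1}(1-t\mid\mbx^{(i)})/F_{n-1}(1-t\mid\mbzero^{(i)})$, since the $\rn(t)$ factors cancel. By the induction hypothesis each factor is a monotone function of $u=1-t$, hence a monotone function of $t$ — the orientation-reversing substitution $u=1-t$ being exactly where the parity flip originates — so the product, and therefore $f_n(\cdot\mid\mbx_{\partial\T_n})/f_n(\cdot\mid\mbzero_{\partial\T_n})$, is monotone in $t$. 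This says the two root laws are ordered in the (stronger) monotone likelihood ratio order, which simultaneously gives the CDF inequality of the lemma at depth $n$ and reinstates the reverse hazard rate comparison required to continue the induction; the same argument applies verbatim with $\mbone_{\partial\T_n}$ in place of $\mbzero_{\partial\T_n}$. I expect the main obstacle to be exactly this choice of strengthened hypothesis together with the bookkeeping of directions: one must check that plain CDF ordering really does fail to propagate (so the reverse-hazard-rate refinement is genuinely needed), that this refinement survives the product over the $\Delta$ children, and that it transforms correctly under $t\mapsto 1-t$, so that the roles of $\mbzero_{\partial\T_n}$ and $\mbone_{\partial\T_n}$ are interchanged from one level to the next, which is the source of the alternation in the statement. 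A more conceptual alternative would be to use that $\T_n$ is bipartite: reflecting $x_v\mapsto 1-x_v$ at every odd-depth vertex turns $\Pol(\T_n)$ into a sublattice of $[0,1]^{V(\T_n)}$ on which the product density $\prod_v\rn(x_v)$ is log-supermodular, so FKG/Holley monotonicity for continuous-spin systems makes the conditional law of $X_r$ stochastically monotone in the reflected boundary values; the two cases of the lemma then correspond to whether the leaves (at depth $n$) lie on the same side of the bipartition as the root, i.e.\ to the parity of $n$.
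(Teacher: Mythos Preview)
Your argument is correct and takes a genuinely different route from the paper's. The paper argues by coupling: for ordered boundaries $\mbx\le\mby$ it couples, by induction, the children's subtree-root spins $(X_i,Y_i)$ so that (say) $X_i\ge Y_i$, draws these pairs independently across $i$, and then identifies the root law with that of $U\sim\mu$ conditioned on $U\le 1-\max_i X_i$; since truncating $\mu$ at a smaller threshold gives a stochastically smaller variable, the ordering at the root follows with parity flipped. You instead stay with the marginal recursion $f_n(t\mid\cdot)\propto \rn(t)\prod_i F_{n-1}(1-t\mid\cdot)$ and carry the stronger reverse-hazard-rate hypothesis, obtaining a monotone likelihood ratio at each step, which simultaneously yields the CDF comparison of the lemma and reinstates the RHR comparison needed at the next level. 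Your remark that plain CDF ordering does not propagate through this renormalized recursion is precisely the delicate point: read literally, the paper's mixture-of-truncations description does not reproduce the root marginal (sampling the $X_i$ independently from the subtree marginals and then truncating $\mu$ inserts an extra factor $\mu[0,1-\max_i X_i]^{-1}$ that biases the mixture away from $f_n$), so your density-ratio argument---or the FKG route on the reflected bipartite system that you sketch at the end---is the more careful way to close the induction. What your approach buys is a self-contained analytic proof that never leaves the marginal recursion; the coupling picture supplies the right intuition for the parity alternation but, to be made fully rigorous, essentially has to pass through one of your two arguments.
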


\subsection{A Recursion for the Marginal Distribution Functions}
\label{subs-densityevol}

We now derive iterative formulas for the functions $F_n.$  
Let $\T_0$ denote the trivial tree consisting of an isolated vertex. Then  from
\eqref{freespin-chc},  $Z_0 \mean \mu [0,1]$, where $\mu$ is the free
spin measure, and 
the associated distribution function $F_0$ takes the form 
\begin{align}
 F_0(\ex)&= \frac{\mu[0,\ex]}{\mu[0,1]},  \quad 
 \ex\in [0,1].   \label{eq:F0}
\end{align}

\begin{lemma}\label{lemma:DensityIteration}
Given any free spin measure $\mu$, 
for every $n \geq 1$, $F_{n}(z) = 0$ for $z \leq 0$, $F_{n}(z) = 1$ for $z
\geq 1$, $F_n$ is nondecreasing on $(0,1)$ and  the following properties hold: 
\begin{enumerate}
\item 
For $z \in [0,1]$, 
$F_{n}(\ex|\mbzero_{\partial \T_n})=F_{n-1}(\ex)$ and
$F_{n+1}(\ex|\mbone_{\partial \T_n})=F_{n-1}(\ex)$. 
\item 
Moreover, 
 \begin{align}\label{eq:FnIteration}
F_n(\ex)&=\frac{Z_{n-1}^{\Delta}}{Z_n}\int_{[0,\ex]}
F_{n-1}^{\Delta}(1-x_{u_0})\mu(dx_{u_0}),
\quad \ex \in [0,1]. 
\end{align}
\item 
Furthermore, 
\begin{align}\label{eq:FnATone}
\int_{[0,1]}  F_{n-1}^\Delta (1-t)\mu(dt) ={Z_n\over Z_{n-1}^\Delta}, 
\end{align}
and 
\begin{align}
\label{lbd}
 \liminf_{n \rightarrow \infty} {Z^\Delta_{n-1}\over Z_{n}}  > 0. 
\end{align} 
\end{enumerate}
\end{lemma}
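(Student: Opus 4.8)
The plan is to establish all the stated properties by induction on $n$, using the tree recursion for the hardcore model on $\T_n$ obtained by conditioning on the spin value at the root and peeling off the $\Delta$ pendant subtrees rooted at its children, each of which is a copy of $\T_{n-1}$. First I would set up the basic structural facts: for $n \geq 1$ every configuration in $\Pol_n$ has all coordinates in $[0,1]$, so $F_n(z) = 0$ for $z \leq 0$ and $F_n(z) = 1$ for $z \geq 1$; monotonicity of $F_n$ on $(0,1)$ is immediate from it being a cumulative distribution function restricted to $[0,1]$. For part (1), the identity $F_n(\cdot \mid \mbzero_{\partial\T_n}) = F_{n-1}(\cdot)$ follows because fixing all leaves of $\T_n$ to $0$ makes the hardcore constraint $x_u + x_v \leq 1$ along the last level of edges vacuous, so the marginal at the root of $\T_n$ with zero boundary coincides with the marginal at the root of $\T_{n-1}$ with free boundary; the identity $F_{n+1}(\cdot \mid \mbone_{\partial\T_n})$ — here the relevant tree has depth $n+1$ and the boundary is pinned at depth $n$ (the reader should note the index bookkeeping: pinning level $n$ of $\T_{n+1}$ to ones forces level $n-1$ to be $0$ by the constraint, hence again reduces to $\T_{n-1}$ with free boundary). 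These are essentially special cases of the monotonicity setup in Lemma \ref{lemma:monotonicity} and can be stated with a one-line justification.

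The heart of the argument is part (2). I would write the joint density of $\P_{\T_n,\mu}$ explicitly: it is proportional to $\prod_{v \in V(\T_n)} m(x_v)$ on the polytope $\Pol_n$, where $m$ is the density of $\mu$. Conditioning on the root value $x_{u_0}$, the remaining integral factorizes over the $\Delta$ subtrees hanging off $u_0$, each of which is a copy of $\T_{n-1}$ whose own root $w$ is constrained only by $x_w \leq 1 - x_{u_0}$. Each such factor equals $\int_{[0,\,1-x_{u_0}]} m(x_w) (\text{subtree partition function with root pinned at } x_w)\, dx_w = Z_{n-1} F_{n-1}(1 - x_{u_0})$, by the very definition of $F_{n-1}$ as the marginal cdf at the root of $\T_{n-1}$ (which is where the normalization $Z_{n-1}$ appears). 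Multiplying the $\Delta$ identical factors and integrating $x_{u_0}$ against $\mu$ over $[0,z]$ gives \eqref{eq:FnIteration}, and setting $z = 1$ in \eqref{eq:FnIteration} (so that the left side is $1$) yields \eqref{eq:FnATone}. The main care needed here is to make the factorization rigorous — i.e., to justify interchanging the order of integration and the product structure via Fubini/Tonelli (all integrands are nonnegative), and to handle the $n = 1$ base case separately, where $F_0$ is given by \eqref{eq:F0} and the subtrees are single vertices.

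For the bound \eqref{lbd}, I would use \eqref{eq:FnATone} in the form $Z_n/Z_{n-1}^\Delta = \int_{[0,1]} F_{n-1}^\Delta(1-t)\,\mu(dt)$ and bound this above. Since $F_{n-1} \leq 1$ and $F_{n-1}(1-t) \leq F_{n-1}(1)=1$, a crude bound gives $Z_n/Z_{n-1}^\Delta \leq \mu[0,1] = Z_0 < \infty$, so $Z_{n-1}^\Delta / Z_n \geq 1/Z_0 > 0$ uniformly in $n$, which gives \eqref{lbd} with room to spare. (If a sharper statement were needed — e.g. a limit rather than a liminf — one would combine part (1) with the monotonicity of $F_n$ in $n$ established later in Section \ref{subs-denslimits}, but the stated liminf bound does not require this.) The step I expect to be the most delicate bookkeeping — though not conceptually hard — is the subtree factorization in part (2): one must be scrupulous that the "partition function of $\T_{n-1}$ with root pinned at $x_w$" integrates against $\mu(dx_w)$ to give exactly $Z_{n-1}$ times the cdf $F_{n-1}$ evaluated at the correct argument $1 - x_{u_0}$, and that the absolute continuity of $\mu$ is what guarantees these conditional objects are well defined. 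Everything else is routine.
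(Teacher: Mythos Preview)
Your proposal is correct and matches the paper's approach: the paper derives \eqref{eq:FnIteration} by the same subtree factorization (each factor equals $Z_{n-1}F_{n-1}(1-x_{u_0})$), obtains \eqref{eq:FnATone} by setting $z=1$, and bounds $Z_n/Z_{n-1}^\Delta \leq \mu[0,1]$ exactly as you do. Two minor points: the paper works directly with the measure $\mu$ rather than a density $m$, so the argument covers arbitrary free spin measures and absolute continuity plays no role here; and your index bookkeeping for the $\mbone$-boundary case is off by one---pinning the leaves of $\T_{n+1}$ (at level $n{+}1$) to ones forces level $n$, not level $n{-}1$, to zero, which then gives free boundary on $\T_{n-1}$ (the confusion is understandable given the evident typo $\partial\T_n$ for $\partial\T_{n+1}$ in the statement).
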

\begin{proof}
The values of $F_n$ on $(-\infty, 0]$ and $[1,\infty)$,  and the
monotonicity of $F_n$ follow
immediately from the fact that $F_n$ is the cumulative distribution
function of a random variable  with support in $[0,1]$.  
Next, given the boundary condition $\mbzero_{\partial \T_n}$, the
hard-core constraints $x_u+x_v\le 1$  for every leaf
node $u$ and its parent $v$,  reduces to 
the vacuous constraint $x_v\le 1$. 
Thus, the boundary condition $\mbzero_{\partial \T_n}$ translates to a free boundary (no boundary)
condition on the tree $\T_{n-1}$. Similarly, the boundary condition
$\mbone_{\partial \T_n}$ forces $x_v$ to be zero for every parent $v$ of a leaf of the
tree $\T_n$, which in turn translates into a free boundary condition for the tree $\T_{n-2}$. This proves the first assertion of the lemma.

We now establish the second part of the lemma.
Let $u_0$ denote the root of the tree
$\T_n$ and note that for every $n\ge 1$, letting $\mbx=(x_u, u\in
V(\T_n))$, we have for every $\ex \in [0,1]$,   
\begin{align}\label{eq:MarginalCDFroot}
F_n (\ex) & = \frac{1}{Z_n} (\mu^{\otimes_{|V(\T_n)|}}) \left\{ \mbx \in
  \mathcal{P}_n: x_{u_0} \le \ex \right\}.  
\end{align}
Now,  let $u_1,\ldots,u_{\Delta}$ 
denote the children of the root $u_0$. Each child $u_i$  is the root of a tree
$\T_{n-1}^i$ that is an isomorphic copy of $\T_{n-1}$. 
The constraint $\mbx\in \Pol_n$ translates into  the
constraints $x_{u_0}+x_{u_i}\le 1, i=1,2,\ldots,\Delta$, plus 
the condition that the natural restriction $\mbx_{\T_{n-1}^i}$ of $\mbx$
to the subtree $\T_{n-1}^i$ lies in $\mathcal{P}_{n-1}^i \mean \mathcal{P}(\T_{n-1}^i)$. 
 Since these subtrees are non-intersecting, we obtain
\begin{align}
& (\mu^{\otimes_{|V(\T_n)|}} ) \left\{ \mbx \in
  \mathcal{P}_n: x_{u_0} \le \ex \right\} \notag \\
\notag \\
&=\int_{0}^{\ex} d\mu (x_{u_0})  \prod_{1 \leq i \leq \Delta}
(\mu^{\otimes_{|V(T_{n-1}^i)|}} ) \left\{ \mbx
  \in \mathcal{P}_n^i: x_{u_i} \leq 1  - x_{u_0}\right\}.
\label{eq:recursion}
\end{align}
Now, for each $1 \leq i \leq \Delta$,  we recognize the identity
\begin{align*}
\frac{1}{Z_{n-1}}(\mu^{\otimes_{|V(T_{n-1}^i)|}} ) \left\{ \mbx
  \in \mathcal{P}_n^i: x_{u_i} \leq 1  - x_{u_0}\right\}
=F_{n-1}(1-x_{u_0}).
\end{align*}
Combined with \eqref{eq:recursion} and \eqref{eq:MarginalCDFroot},
this yields \eqref{eq:FnIteration}. 

Setting $F_n(1)  = 1$ in \eqref{eq:FnIteration}, we obtain \eqref{eq:FnATone}. 
Furthermore, since $F_{n-1}$ is bounded by $1$  and 
$\mu$ is a finite Borel measure, 
\eqref{eq:FnATone} implies that 
$\sup_{n} \frac{Z_n}{Z_{n-1}^\Delta} \leq  \mu [0,1] <
\infty$,  
which yields \eqref{lbd}. 
\end{proof}

Combining Lemma~\ref{lemma:monotonicity} and the first part of
Lemma~\ref{lemma:DensityIteration} we now obtain a different
monotonicity result  along certain subsequences. 

\begin{cor}\label{coro:Monotonicity}
For every free spin measure $\mu$, for $n\geq 1$ and $\ex\in [0,1]$, $F_{2n+1}(\ex)\le
F_{2n-1}(\ex)$ and $F_{2n}(\ex)\ge F_{2n-2}(\ex)$. 
Furthermore, for every $n_1,n_2\in\Z_+$, with $F_{2n_1+1}(\ex)\ge F_{2n_2}(\ex)$.
\end{cor}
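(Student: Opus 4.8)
The plan is to combine the one-step identifications of zero and one boundary conditions from Lemma \ref{lemma:DensityIteration}(1) with the monotonicity sandwich of Lemma \ref{lemma:monotonicity}, observing that passing from depth $n$ to depth $n+2$ corresponds to prepending two levels to the tree, which swaps the roles of the extremal boundary conditions twice. First I would recall from Lemma \ref{lemma:DensityIteration}(1) that $F_{n+2}(z \mid \mbzero_{\partial \T_{n+2}}) = F_{n+1}(z \mid \mbone_{\partial \T_{n+1}}) = F_{n}(z)$ for $z \in [0,1]$: the all-zeros boundary on $\T_{n+2}$ frees the leaves, leaving the all-ones effective boundary on $\T_{n+1}$, which in turn pins the penultimate layer to zero and frees $\T_n$. (One must check the end cases $n = 0, 1$ separately, using \eqref{eq:F0} and the explicit form \eqref{eq:FnIteration} for small $n$, but this is routine.)

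Next, apply Lemma \ref{lemma:monotonicity} to the tree $\T_{n+2}$. When $n+2$ is even, the lemma gives $F_{n+2}(z \mid \mbzero_{\partial \T_{n+2}}) \ge F_{n+2}(z \mid \mbx_{\partial \T_{n+2}})$ for every boundary condition $\mbx_{\partial \T_{n+2}}$; in particular, taking $\mbx_{\partial \T_{n+2}}$ to be the one induced by the all-ones boundary two levels down — i.e. using that the unconditioned $F_{n+2}(z)$ is a mixture over boundary conditions, so it lies between the two extremes — we get $F_{n+2}(z) \le F_{n+2}(z \mid \mbzero_{\partial \T_{n+2}}) = F_n(z)$. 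Setting $n+2 = 2m$ this reads $F_{2m}(z) \le F_{2m-2}(z)$; but wait — the claim is the reverse inequality $F_{2n}(z) \ge F_{2n-2}(z)$, so I would instead use the \emph{lower} extreme: $F_{n+2}(z) \ge F_{n+2}(z \mid \mbone_{\partial \T_{n+2}}) = F_{n+1}(z \mid \mbzero_{\partial \T_{n+1}}) = F_n(z)$, where the first equality is again Lemma \ref{lemma:DensityIteration}(1) and the sandwich inequality comes from Lemma \ref{lemma:monotonicity} in the even case. This gives $F_{2n}(z) \ge F_{2n-2}(z)$. The odd case is entirely analogous with the inequalities reversed: for $n+2$ odd, $F_{n+2}(z) \le F_{n+2}(z \mid \mbone_{\partial \T_{n+2}}) = F_n(z)$, yielding $F_{2n+1}(z) \le F_{2n-1}(z)$.

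For the last assertion, that $F_{2n_1+1}(z) \ge F_{2n_2}(z)$ for all $n_1, n_2 \in \Z_+$, the idea is to use the two monotone subsequences just established together with a single comparison bridging an odd and an even depth. By the monotonicity in the even index, $F_{2n_2}(z) \le F_{2N}(z)$ for any $N \ge n_2$, and by monotonicity in the odd index $F_{2n_1+1}(z) \ge F_{2N+1}(z)$ for $N \ge n_1$; so it suffices to compare $F_{2N+1}$ with $F_{2N}$ for a single large $N$ (say $N \ge \max(n_1,n_2)$). For that, I would again invoke Lemma \ref{lemma:monotonicity} on $\T_{2N+1}$: in the odd case it gives $F_{2N+1}(z) \ge F_{2N+1}(z \mid \mbzero_{\partial \T_{2N+1}}) = F_{2N}(z)$ by Lemma \ref{lemma:DensityIteration}(1). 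Chaining the three inequalities yields the claim.

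The main obstacle I anticipate is the careful bookkeeping of which extreme boundary condition dominates at each parity, and making sure the mixture/sandwich step — that the unconditioned $F_n$ lies between $F_n(\cdot \mid \mbzero)$ and $F_n(\cdot \mid \mbone)$ — is correctly invoked (it follows because $F_n(z) = \E[F_n(z \mid \mbX_{\partial \T_n})]$ under the leaf marginal, and each summand is sandwiched by Lemma \ref{lemma:monotonicity}); everything else is a mechanical application of the two cited lemmas, plus verifying the small-$n$ base cases do not break the depth-shift identities.
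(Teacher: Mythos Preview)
Your proposal is correct and follows essentially the same route as the paper: write the unconditioned $F_n(z)$ as a mixture $\E[F_n(z\mid \mbX_{\partial\T_n})]$, sandwich using Lemma~\ref{lemma:monotonicity}, and then invoke Lemma~\ref{lemma:DensityIteration}(1) to identify the extreme boundary conditional with $F_{n-2}$ (or $F_{n-1}$, as needed). One caution: the chain in your first paragraph, $F_{n+2}(z\mid\mbzero_{\partial\T_{n+2}})=F_{n+1}(z\mid\mbone_{\partial\T_{n+1}})$, is not right --- the all-zeros boundary gives the \emph{free} measure on $\T_{n+1}$, i.e.\ $F_{n+1}(z)$, not the all-ones conditional --- but you correctly abandon that direction and your eventual argument via $F_{n+2}(z\mid\mbone_{\partial\T_{n+2}})=F_n(z)$ is fine. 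For the final assertion the paper splits into the two cases $n_1\ge n_2$ and $n_1<n_2$ rather than passing to a common $N=\max(n_1,n_2)$; your version is a mild streamlining of the same idea.
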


\begin{proof}
Once again, let $u_0$ denote the root of the tree $\T_n$ and label its
children as  $u_1,\ldots,u_\Delta$. 
Consider the random vector $\mbX$ chosen according to the
 hardcore measure $\P = \P_{\T_n, \mu}$, and
let $\P_{\partial \T_n}$ denote the marginal of $\P$ on the leaves
$\partial \T_n$. 
Then $F_n$ is the cumulative distribution function of the marginal at the
root, that is,  $F_n(\ex)=\P(X_{u_0}\le \ex)$.  
Thus, for every odd $n \geq 3$, using Lemma~\ref{lemma:monotonicity}
for the inequality and Lemma~\ref{lemma:DensityIteration}(1) for the last
equality below, we have 
\begin{align*}
F_{n}(\ex) =\P(X_{u_0}\le \ex) 
&=\int_{\mbx_{\partial\T_n} \in [0,1]^{|\partial\T_n|}}
\P(X_{u_0}\le\ex|\mbX_{\partial\T_n} =\mbx_{\partial \T_n})\P_{\partial \T_n} (d\mbx_{\partial\T_n})\\
&\le \P(X_{u_0}\le \ex|\mbone_{\partial\T_n})\\
&=F_{n-2}(\ex). 
\end{align*} 
Similarly, for every even $n$
we obtain $F_n(\ex)\ge F_{n-2}(\ex)$ for every $\ex$. 
Finally, to establish
the last inequality suppose first that $n_1 \ge n_2$. Then since the
first assertion of the lemma implies $F_{2n_1}(\ex)\ge F_{2n_2}(\ex)$, by a
similar derivation, we have 
\begin{align*}
F_{2n_1+1}(\ex)
&=\int_{\mbx_{\partial\T_{2n_1+1}}}\P(X_{u_0}\le \ex|\mbX_{\partial\T_{2n_1+1}}=\mbx_{\partial\T_{2n_1+1}})\P_{\partial \T_{2n_1+1}}  (d\mbx_{\partial\T_{2n_1+1}}) \\
&\ge \P(X_{u_0}\le \ex|\mbzero_{\partial\T_{2n_1+1}})\\
&=F_{2n_1}(\ex)\\
&\ge F_{2n_2}(\ex). 
\end{align*}
Conversely, if $n_1<n_2$, then $2n_1 + 1 \leq 2n_2 - 1$ and we use instead 
\begin{align*}
F_{2n_2}(\ex)
&=\int_{\mbx_{\partial\T_{2n_2}}}\P(X_{u_0}\le
\ex|\mbX_{\partial\T_{2n_2}}=\mbx_{\partial\T_{2n_2}})\P_{\partial \T_{2n_2}} (d \mbx_{\partial\T_{2n_2}}) \\
&\le \P(X_{u_0}\le \ex|\mbzero_{\partial\T_{2n_2}})\\
&=F_{2n_2-1}(\ex)\\
&\le F_{2n_1+1}(\ex).
\end{align*} 
\end{proof}

\subsection{A Convergence Result}
\label{subs-denslimits}

The monotonicity result of Corollary \ref{coro:Monotonicity} allows us
to argue the existence of the following 
pointwise limits: for $\ex \in [0,1]$, 
\begin{align}
\label{FoFe}
F_o(\ex) \mean \lim_{n\rightarrow\infty}F_{2n+1}(\ex), \qquad 
F_e(\ex) \mean \lim_{n\rightarrow\infty}F_{2n}(\ex). 
\end{align}
Also, 
note that by Corollary
\ref{coro:Monotonicity}, for $\ex \in [0,1]$, 
\begin{align}\label{eq:dominance}
1\ge F_{2n+1}(\ex)\ge F_{o}(\ex)\ge F_{e}(\ex)\ge F_{2n}(\ex)\ge 0. 
\end{align} 

Clearly $F_o$ and $F_e$ are measurable and bounded. So,  
we can  define 
\begin{align}\label{eq:C_o}
C_o\mean \left(\int_{[0,1]} F_o^\Delta(1-t)\mu( dt) \right)^{-1},
\end{align}
and
\begin{align}\label{eq:C_e}
C_e \mean \left(\int_{[0,1]}F_e^\Delta(1-t) \mu(dt) \right)^{-1}. 
\end{align}
Note that by \eqref{FoFe}, since $\mu$ is a finite Borel measure, the
dominated 
convergence theorem, \eqref{eq:FnATone}  and \eqref{lbd} imply 
\begin{align}
\label{finiteconst}
C_o &= \lim_{n \rightarrow \infty}
\left(\int_{[0,1]} F_{2n+1}^\Delta(1-t) \mu (dt) \right)^{-1}  =
\lim_{n \rightarrow \infty} \frac{Z_{2n+1}^\Delta}{Z_{2n+2}} > 0. 
\end{align}
Moreover,  by \eqref{eq:dominance},  the dominated convergence theorem 
 and \eqref{eq:FnATone}, we have 
\begin{align}
 \label{finiteconst2}
C_e &=  \lim_{n \rightarrow \infty}\left(\int_{[0,1]}
  F_{2n}^\Delta(1-t) \mu(dt) \right)^{-1}    =
\lim_{n \rightarrow \infty} \frac{Z_{2n}^\Delta}{Z_{2n+1}}. 
\end{align} 
The first equality above, together with \eqref{eq:dominance} and \eqref{eq:F0}, also show that 
\begin{align}
\label{finiteconst3}
C_e^{-1} & \geq \frac{1}{(\mu[0,1])^\Delta} \int_{[0,1]}(\mu[0,1-t])^{\Delta}
\mu(dt) \geq
\frac{(\mu[0,1/2])^{\Delta+1}}{(\mu[0,1])^\Delta}. 
\end{align}

We now derive an analogue of (\ref{eq:FnIteration}) for the limits $F_o$
and $F_e$, and strengthen the convergence in \eqref{FoFe}. 

\begin{cor}\label{cor:Limits} Suppose the free spin measure $\mu$
  satisfies 
$\mu[0,1/2] > 0$. Then $C_e, C_o \in (0,\infty)$, 
$F_o(0) = F_e(0) = 0$, $F_o(1) = F_e(1) = 1,$ 
and  for $\ex \in [0,1]$, 
\begin{align}
\label{relationFo}
F_o(\ex)&=C_e \int_0^\ex  F_e^\Delta(1-t) \mu (dt), \\
\label{relationFe}
F_e(\ex)&=C_o\int_0^\ex  F_o^\Delta(1-t) \mu(dt).  
\end{align}
Moreover, we also have 
\begin{align*}
\lim_{n \rightarrow \infty}||F_{2n+1}(\cdot)-F_{o}(\cdot)||_{[0,1]}&=0, \\
\lim_{n \rightarrow \infty}||F_{2n}(\cdot)-F_{e}(\cdot)||_{[0,1]}&=0. 
\end{align*}
Finally, suppose $\mu$ has density $\rn$ and  that ${\mathcal I}$ is
an  open set of continuity points of $\rn$. 
Then $\rn$, $F_n$, $F_o$ and $F_e$ are continuously differentiable on
${\mathcal I}$, and for every compact subset ${\mathcal K} \subset
{\mathcal I}$, 
\begin{align*}
\lim_{n \rightarrow \infty}||\dot F_{2n+1}(\cdot)-\dot F_{o}(\cdot)||_{{\mathcal K}}&=0, \\
\lim_{n \rightarrow \infty}||\dot F_{2n}(\cdot)-\dot F_{e}(\cdot)||_{{\mathcal K}}&=0.
\end{align*}
\end{cor}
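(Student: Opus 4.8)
## Proof Proposal for Corollary \ref{cor:Limits}

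\textbf{Overall strategy.} The plan is to pass to the limit in the recursion \eqref{eq:FnIteration} written in the two-step form, then bootstrap the resulting pointwise convergence of monotone functions to uniform convergence (Dini-type), and finally upgrade to $C^1$-convergence on compacts using the integral representations.

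\textbf{Step 1: The two-step recursion and its limit.} First I would iterate \eqref{eq:FnIteration} once to express $F_{n}$ in terms of $F_{n-2}$; equivalently, apply \eqref{eq:FnIteration} with $n$ replaced by $2n+1$ and by $2n$. Using \eqref{eq:FnATone} to identify the prefactor $Z_{n-1}^\Delta/Z_n$ as $\left(\int_{[0,1]} F_{n-1}^\Delta(1-t)\mu(dt)\right)^{-1}$, one gets
\begin{align*}
F_{2n+1}(\ex) &= \left(\int_{[0,1]} F_{2n}^\Delta(1-t)\mu(dt)\right)^{-1}\int_{[0,\ex]} F_{2n}^\Delta(1-t)\mu(dt), \\
F_{2n}(\ex) &= \left(\int_{[0,1]} F_{2n-1}^\Delta(1-t)\mu(dt)\right)^{-1}\int_{[0,\ex]} F_{2n-1}^\Delta(1-t)\mu(dt).
\end{align*}
Now let $n\to\infty$. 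By \eqref{FoFe}, \eqref{eq:dominance}, boundedness of $F_n$ by $1$, finiteness of $\mu$, and dominated convergence, $\int_{[0,\ex]} F_{2n}^\Delta(1-t)\mu(dt)\to\int_{[0,\ex]} F_e^\Delta(1-t)\mu(dt)$ for every $\ex$, and similarly with $e$ replaced by $o$; the prefactors converge to $C_e$ and $C_o$ by \eqref{finiteconst}--\eqref{finiteconst2} (whose positivity and finiteness under $\mu[0,1/2]>0$ follow from \eqref{finiteconst3} and \eqref{lbd}). This yields \eqref{relationFo}--\eqref{relationFe}. Evaluating at $\ex = 0$ gives $F_o(0)=F_e(0)=0$; evaluating at $\ex = 1$ and using the definitions \eqref{eq:C_o}--\eqref{eq:C_e} of $C_o, C_e$ gives $F_o(1)=F_e(1)=1$.

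\textbf{Step 2: From pointwise to uniform convergence.} Each $F_n$ is nondecreasing (Lemma \ref{lemma:DensityIteration}), and by \eqref{relationFo}--\eqref{relationFe} the limits $F_o, F_e$ are continuous on $[0,1]$ (being integrals against the finite measure $\mu$ of a bounded integrand, hence absolutely continuous in $\ex$). The monotone sequences $F_{2n+1}\downarrow F_o$ and $F_{2n}\uparrow F_e$ (by Corollary \ref{coro:Monotonicity}) thus converge pointwise on $[0,1]$ to a continuous limit; since $[0,1]$ is compact, Dini's theorem (in its version for monotone-in-$n$ sequences of monotone-in-$\ex$ functions, or simply the standard Pólya-type argument using continuity of the limit and endpoint values $0$ and $1$) upgrades this to uniform convergence, giving the two displayed $\|\cdot\|_{[0,1]}$ limits.

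\textbf{Step 3: $C^1$-convergence on compacts.} Now suppose $\mu$ has density $\rn$ and ${\mathcal I}$ is an open set of continuity points of $\rn$. Differentiating \eqref{eq:FnIteration} (in the density form, i.e. $\dot F_n(\ex) = (Z_{n-1}^\Delta/Z_n)\, F_{n-1}^\Delta(1-\ex)\,\rn(\ex)$ for $n\ge 2$) shows $\dot F_n$ is continuous on ${\mathcal I}$ wherever $\rn$ is, and similarly differentiating \eqref{relationFo}--\eqref{relationFe} gives $\dot F_o(\ex) = C_e\, F_e^\Delta(1-\ex)\,\rn(\ex)$ and $\dot F_e(\ex) = C_o\, F_o^\Delta(1-\ex)\,\rn(\ex)$, which are continuous on ${\mathcal I}$. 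For a compact ${\mathcal K}\subset{\mathcal I}$, $\rn$ is bounded on ${\mathcal K}$, and $\dot F_{2n+1}(\ex) - \dot F_o(\ex) = \rn(\ex)\left[(Z_{2n}^\Delta/Z_{2n+1})\,F_{2n}^\Delta(1-\ex) - C_e\,F_e^\Delta(1-\ex)\right]$; the bracket tends to $0$ uniformly on ${\mathcal K}$ because $Z_{2n}^\Delta/Z_{2n+1}\to C_e$ (a number), $F_{2n}\to F_e$ uniformly on $[0,1]$ by Step 2, and $t\mapsto t^\Delta$ is uniformly continuous on $[0,1]$. The argument for $\dot F_{2n}\to\dot F_e$ is identical with the roles of $o/e$ and $C_o/C_e$ swapped. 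That $F_n, F_o, F_e$ are genuinely $C^1$ (not merely a.e.\ differentiable) on ${\mathcal I}$ follows since their derivatives, as just computed, are continuous there.

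\textbf{Main obstacle.} The only genuinely delicate point is Step 2: passing from pointwise to \emph{uniform} convergence requires knowing the limit is continuous, which in turn requires \eqref{relationFo}--\eqref{relationFe}, so the order matters — one must establish the integral representations first. A secondary subtlety is that the convergence in Step 1 (and the positivity of $C_o, C_e$) uses crucially that $\mu$ is finite and $\mu[0,1/2]>0$; without the latter the constants could degenerate. Everything else is routine dominated-convergence and uniform-continuity bookkeeping.
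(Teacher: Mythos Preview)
Your proposal is correct and follows essentially the same argument as the paper: pass to the limit in the recursion \eqref{eq:FnIteration} via dominated convergence to obtain \eqref{relationFo}--\eqref{relationFe}, use continuity of the limits plus monotone-in-$n$ convergence and Dini/P\'olya to upgrade to uniform convergence, then read off $C^1$-convergence on compacts from the explicit derivative formulas. The only cosmetic difference is that the paper obtains $F_o(0)=F_e(0)=0$ and $F_o(1)=F_e(1)=1$ directly from $F_n(0)=0$, $F_n(1)=1$ rather than by evaluating the integral identities, and your write-up is slightly more explicit in Step~3 about why the derivative difference tends to zero uniformly.
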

\begin{proof}
The values of $F_o$ and $F_e$ at $0$ and $1$ follow directly from the
corresponding values of $F_n$ from Lemma \ref{lemma:DensityIteration}
and \eqref{FoFe}. 
Since \eqref{eq:dominance} implies $C_o \leq C_e$ the 
 estimates \eqref{finiteconst} and \eqref{finiteconst3}  imply  that as long as $\mu[0,1/2] > 0$, both $C_o$
 and $C_e$ lie in $(0,\infty)$.  
For $\ex \in [0,1]$, let  $F^*_o(\ex)$ and $F^*_e(\ex)$
equal the right-hand sides of \eqref{relationFo} and 
\eqref{relationFe}, 
respectively.  
  Taking limits on both sides of \eqref{eq:FnIteration} along
 odd $n$, and using \eqref{FoFe}, \eqref{finiteconst2} and the
     dominated convergence theorem, we obtain \eqref{relationFo}. 
The relation \eqref{relationFe} is obtained analogously, using
\eqref{finiteconst} instead of \eqref{finiteconst2}.  
The latter relations show that $F_o$ and $F_e$ are continuous. Since
they are also  pointwise
monotone limits of the sequences $F_{2n+1}$ and $F_{2n}$, respectively (see
Corollary \ref{coro:Monotonicity} and \eqref{FoFe}), by Dini's theorem, the convergence
is in fact uniform.   

We now prove the last property of the lemma, even though we do not use
it in the sequel. 
Suppose $\mu$ has  density $\rn$ that is continuous on
${\mathcal  I}$.  Then, \eqref{eq:F0} and \eqref{eq:FnIteration} show that
for every $n$, $F_n$ is absolutely continuous and $\dot{F}_n(z) =
Z_n^{-1}Z_{n-1}^\Delta F_{n-1}^\Delta (1-z) \rn (z)$,  from which it
follows that $\dot{F}_n$ is continuous on ${\mathcal I}$.  
Likewise, the continuous differentiability of 
$F_o$ and $F_e$ on ${\mathcal I}$ can  be deduced from
 (\ref{relationFo}) and (\ref{relationFe}).   The uniform convergence of the derivatives on
any compact subset ${\mathcal K} \subset {\mathcal I}$ is a direct
consequence of \eqref{eq:FnIteration}, \eqref{relationFo}-\eqref{relationFe} and the 
uniform convergence of $\{F_{2n+1}\}$ to $F_o$ and $\{F_{2n}\}$ to
$F_e$. 
\end{proof}

\begin{remark}
\label{rem-whatsleft}
{\em  We now claim (and justify below) that to prove the correlation
  decay property in Theorems~\ref{theorem:MainResult} and
  \ref{theorem:MainResult2}, it suffices to show (for the respective
  models) that $C_e = C_o$.  Indeed, by Lemma \ref{lemma:monotonicity},
  Lemma \ref{lemma:DensityIteration}(1) and \eqref{FoFe}, to show
  correlation decay  is equivalent to showing $F_o = F_e$. 
Now, by \eqref{eq:dominance} we have $F_o(\ex) \geq F_e (\ex)$ for
every $\ex \in [0,1]$. 
Hence, if $C_e = C_o=C$, then by
\eqref{relationFo}-\eqref{relationFe}, 
 we have for $\ex \in [0,1]$, 
\[ F_o( \ex) = C\int_{[0,z]} F_e^\Delta (1-t) \mu(dt) \leq
C\int_{[0,z]} F_o^\Delta (1-t) \mu (dt) = F_e(\ex). 
\]
Together with the observation that $F_0 (z) \leq F_z(z)$, this implies
$F_o = F_e$. 
}
\end{remark}

\subsection{Differential equations for $F_o$ and $F_e$}
\label{subs-diffeq}

To show $C_e = C_o,$  we first derive some differential equations for
the functions $F_o$
and $F_e$.    The first result of this section is as follows.

\begin{prop}
\label{prop-ode2}
Suppose the free spin measure $\mu$ is absolutely continuous with 
 density $\rn$ and satisfies $\mu[0,1/2] > 0$.   Let ${\mathcal I}$ be any 
non-empty open set in $[0,1]$ that is symmetric in the sense that $x
\in {\mathcal I}$ implies $1-x \in {\mathcal I}$. 
If $\rn$ is continuously
differentiable and strictly positive on ${\mathcal I}$, then on  ${\mathcal I}$,
the function $F_o$ defined in \eqref{FoFe} is twice continuously
differentiable and satisfies 
\begin{equation}
\label{ode-second}
\ddot F_o (\ex)  =  \frac{\dot{\rn}(\ex)}{\rn(\ex)}\dot F_o(\ex) -
C_oC_e^{1\over \Delta}\Delta  (\rn (\ex))^{1 \over \Delta} \rn(1-\ex) (\dot
F_o(\ex))^{\Delta-1\over \Delta} (F_o(\ex))^{\Delta}, 
\end{equation}
\end{prop}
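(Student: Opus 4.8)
The plan is to differentiate twice the first-order integral identities for $F_o$ and $F_e$ supplied by Corollary \ref{cor:Limits}, eliminating $F_e$ along the way. Since $\mu$ has density $\rn$, relations \eqref{relationFo}--\eqref{relationFe} read
\begin{equation}
\label{eq:propode2aux1}
F_o(\ex)=C_e\int_0^{\ex}F_e^\Delta(1-t)\,\rn(t)\,dt,\qquad
F_e(\ex)=C_o\int_0^{\ex}F_o^\Delta(1-t)\,\rn(t)\,dt .
\end{equation}
By Corollary \ref{cor:Limits}, $F_o$ and $F_e$ are continuously differentiable on ${\mathcal I}$, and the integrands in \eqref{eq:propode2aux1} are continuous on ${\mathcal I}$ (as $\rn$ is continuous there and $F_o$, $F_e$ are continuous everywhere), so differentiating \eqref{eq:propode2aux1} gives, for $\ex\in{\mathcal I}$,
\begin{equation}
\label{eq:propode2aux2}
\dot F_o(\ex)=C_e\,F_e^\Delta(1-\ex)\,\rn(\ex),\qquad
\dot F_e(\ex)=C_o\,F_o^\Delta(1-\ex)\,\rn(\ex).
\end{equation}

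The next step is to upgrade $F_o$ to a $C^2$ function on ${\mathcal I}$. Here I would use that ${\mathcal I}$ is symmetric: for $\ex\in{\mathcal I}$ we also have $1-\ex\in{\mathcal I}$, so $F_e$ is $C^1$ on a neighborhood of $1-\ex$; since $\Delta$ is a positive integer, $\ex\mapsto F_e^\Delta(1-\ex)$ is then $C^1$ on ${\mathcal I}$. As $\rn$ is $C^1$ on ${\mathcal I}$ by hypothesis, the right-hand side of the first identity in \eqref{eq:propode2aux2} is $C^1$ on ${\mathcal I}$, hence $\dot F_o\in C^1({\mathcal I})$, i.e.\ $F_o\in C^2({\mathcal I})$.

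Now I would differentiate the first identity in \eqref{eq:propode2aux2} and substitute the second. For $\ex\in{\mathcal I}$,
\begin{equation*}
\ddot F_o(\ex)=C_e\,F_e^\Delta(1-\ex)\,\dot{\rn}(\ex)
-\Delta\,C_e\,\rn(\ex)\,F_e^{\Delta-1}(1-\ex)\,\dot F_e(1-\ex),
\end{equation*}
and $\dot F_e(1-\ex)=C_o\,F_o^\Delta(\ex)\,\rn(1-\ex)$ by \eqref{eq:propode2aux2}. The first term equals $\tfrac{\dot{\rn}(\ex)}{\rn(\ex)}\big(C_e F_e^\Delta(1-\ex)\rn(\ex)\big)=\tfrac{\dot{\rn}(\ex)}{\rn(\ex)}\dot F_o(\ex)$, the first term of \eqref{ode-second}. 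For the second, invert the first relation in \eqref{eq:propode2aux2} — legitimate since $\rn(\ex)>0$ on ${\mathcal I}$ and both sides are non-negative (recall $F_o$ is non-decreasing) — to write $F_e(1-\ex)=\big(\dot F_o(\ex)/(C_e\rn(\ex))\big)^{1/\Delta}$, whence $F_e^{\Delta-1}(1-\ex)=\big(\dot F_o(\ex)/(C_e\rn(\ex))\big)^{(\Delta-1)/\Delta}$. Substituting and collecting powers, using $C_e\cdot C_e^{-(\Delta-1)/\Delta}=C_e^{1/\Delta}$ and $\rn(\ex)\cdot(\rn(\ex))^{-(\Delta-1)/\Delta}=(\rn(\ex))^{1/\Delta}$, turns the second term into $-C_oC_e^{1/\Delta}\Delta\,(\rn(\ex))^{1/\Delta}\rn(1-\ex)(\dot F_o(\ex))^{(\Delta-1)/\Delta}(F_o(\ex))^{\Delta}$, which is exactly the second term of \eqref{ode-second}.

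The argument is mostly mechanical — the fundamental theorem of calculus, the chain rule, and the algebra of eliminating $F_e$ via \eqref{eq:propode2aux2} — so I expect the only genuine (and mild) obstacle to be the regularity step of the second paragraph: verifying that $F_o$ is actually twice continuously differentiable rather than merely formally satisfying \eqref{ode-second}. That is precisely the point at which the symmetry of ${\mathcal I}$ and the hypotheses that $\rn$ is continuously differentiable and strictly positive on ${\mathcal I}$ are used.
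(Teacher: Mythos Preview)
Your proof is correct and follows essentially the same approach as the paper: differentiate the integral identities \eqref{relationFo}--\eqref{relationFe} to obtain $\dot F_o$ and $\dot F_e$, use the symmetry of ${\mathcal I}$ and the $C^1$ hypothesis on $\rn$ to upgrade $F_o$ to $C^2$, then differentiate $\dot F_o$ once more and eliminate $F_e$ via the inversion $F_e(1-\ex)=(\dot F_o(\ex)/(C_e\rn(\ex)))^{1/\Delta}$. The paper proceeds identically; if anything, you are slightly more explicit about why the inversion step is legitimate (positivity of $\rn$ on ${\mathcal I}$ and non-negativity of $\dot F_o$).
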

\begin{proof} 
 Relations 
\eqref{relationFo} and \eqref{relationFe} of Corollary
\ref{cor:Limits} imply that $F_o$
 and $F_e$ are absolutely continuous with density 
$C_e F_e^{\Delta} (1 - \cdot) \rn (\cdot)$ and $C_o F_o^{\Delta} (1 -
\cdot) \rn (\cdot)$, respectively.  Now, if $\rn$ is continuous on
${\mathcal I}$, then clearly, these densities are continuous, and so 
$F_o$ and $F_e$ are continuously differentiable on ${\mathcal I}$.  
If ${\mathcal I}$ is symmetric, then $F_o(1-\cdot)$ and $F_e(1-\cdot)$
are also continuously differentiable and so, if $\rn$ is continuously
differentiable on ${\mathcal I}$, then $F_o$ and $F_e$ are twice
continuously differentiable on ${\mathcal I}$ and for 
 $\ex\in {\mathcal  I}$, 
\begin{eqnarray*}
\ddot F_o (\ex)  =  C_e  \dot \rn (\ex) F_e^\Delta(1-\ex) - C_e\rn (\ex)
\Delta F_e^{\Delta-1} (1-\ex) \dot F_e (1-\ex). 
\end{eqnarray*}
Applying \eqref{relationFo} and \eqref{relationFe} again,  we also have
\begin{align*}
F_e(1-\ex)&= (\dot F_o(\ex))^{1\over \Delta}(C_e m (\ex))^{-1 \over \Delta},\\
\dot F_e (1-\ex)&=C_o\rn (1-\ex)F_o^\Delta(\ex).
\end{align*}
Substituting these identities into the previous expression for  $\ddot F_o$, we obtain the
following second-order ODE  for $F_o$ on ${\mathcal I}$: 
\begin{align*}
\ddot F_o (\ex)  &=  \frac{\dot{\rn}(\ex)}{\rn(\ex)} \dot F_o(\ex) -
C_e \rn(\ex)  \Delta  (C_e \rn (\ex))^{-{\Delta-1\over \Delta}} (\dot F_o(\ex))^{\Delta-1\over \Delta}
C_o \rn (1-\ex) (F_o(\ex))^\Delta\\
&=  \frac{\dot{\rn}(\ex)}{\rn(\ex)}\dot F_o(\ex) -
C_oC_e^{1\over \Delta}\Delta  (\rn (\ex))^{1 \over \Delta} \rn(1-\ex) (\dot
F_o(\ex))^{\Delta-1\over \Delta} (F_o(\ex))^{\Delta}, 
\end{align*}
for $\ex \in {\mathcal I}$. 
\end{proof}

We now fix $\lambda > 0$ and $\ve \in (0,1/2]$, and specialize to the
case when the density $\rn$ of the free spin measure has the form 
\begin{equation}
\label{rn-form}
 \rn (\ex) =  \lambda^{\ex} 
 \ind_{(0,\ve] \cup   [1-\ve, 1)}(\ex), \quad \ex \in [0,1]. 
\end{equation}
Note that the case $\ve = 1/2$ corresponds to the continuous hardcore
model.    Define 
\begin{align}
\theta_o&\mean 
\left(\lambda C_o^{1\over \Delta}C_e\right)^{\Delta\over  \Delta^2-1} \label{thetaoe}
\qquad \mbox{ and } \qquad 
& \theta_e&\mean \left(\lambda C_e^{1\over    \Delta}C_o\right)^{\Delta\over \Delta^2-1}. 
\end{align}

We then have the following result: 
\begin{prop}
\label{prop-hamiltonian}
Suppose the free spin measure has a density $\rn$ of the form
\eqref{rn-form} for some $\ve \in (0,1/2]$ and $\lambda > 0$. Then 
 $F_o$ is twice continuously differentiable on the intervals $(0,\ve)$
and $(1-\ve,1)$ and the function 
\[ \AR _\lambda(z) \doteq \lambda^{-z} (\theta_e F_o(z))^{\Delta+1} +
\lambda^{-\frac{z}{\Delta}}
 (\theta_e \dot{F}_o(z))^{\frac{\Delta+1}{\Delta}}- 
 (\ln \lambda) \lambda^{-\frac{z}{\Delta+1}}
\theta_e^{\frac{\Delta+1}{\Delta}}
F_o(z) (\dot{F}_o(z))^{\frac{1}{\Delta}}, 
\]
is constant on each of the intervals $(0,\ve)$ and $(1-\ve, 1)$. 
Moreover,  $F_o$ satisfies 
 \begin{equation}
 \label{genbc-2}
 \dot{F}_o(0+) = C_e , \qquad  \dot{F}_o(1-) = 0, 
\end{equation}
and 
\begin{equation}
\label{genbc-3} 
\inf\{t > 0: F_o(t) = 1 \} = 1, 
\end{equation}
and 
$\AR_\lambda$ satisfies the boundary conditions
\begin{equation}
\label{AR-bc}
 \AR_\lambda (0+) = (\theta_e C_e)^{\frac{\Delta+1}{\Delta}}, \qquad
\AR_\lambda (1-) = \lambda^{-1} \theta_e^{\Delta+1}. 
\end{equation}
\end{prop}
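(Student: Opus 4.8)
The plan is to exploit the second-order ODE \eqref{ode-second} of Proposition \ref{prop-ode2}, specialized to the density $\rn$ of the form \eqref{rn-form}. On each of the intervals $(0,\ve)$ and $(1-\ve,1)$, the density $\rn(\ex) = \lambda^\ex$ is smooth and strictly positive, and crucially the symmetric set ${\mathcal I} = (0,\ve)\cup(1-\ve,1)$ satisfies the hypothesis of Proposition \ref{prop-ode2}. Substituting $\rn(\ex) = \lambda^\ex$, so that $\dot\rn/\rn = \ln\lambda$ and $(\rn(\ex))^{1/\Delta}\rn(1-\ex) = \lambda^{\ex/\Delta}\lambda^{1-\ex} = \lambda\,\lambda^{-\ex(\Delta-1)/\Delta}$, equation \eqref{ode-second} becomes
\[
\ddot F_o(\ex) = (\ln\lambda)\dot F_o(\ex) - C_oC_e^{1/\Delta}\Delta\lambda\,\lambda^{-\ex(\Delta-1)/\Delta}(\dot F_o(\ex))^{(\Delta-1)/\Delta}(F_o(\ex))^\Delta,
\]
valid on each interval. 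The twice-continuous-differentiability claim is then immediate from Proposition \ref{prop-ode2}. The heart of the proof is to verify that $\AR_\lambda$ is a first integral of this ODE: differentiate $\AR_\lambda(z)$ term by term using the product and chain rules, and check that all terms cancel after substituting the expression for $\ddot F_o$ above. This is a routine but somewhat delicate computation; the appearance of the constants $\theta_e$ (and implicitly $\theta_o$) in the definition of $\AR_\lambda$ is precisely chosen to make the coefficients match. I would organize the differentiation by grouping the three summands of $\AR_\lambda$, computing each derivative, and noting that the $\ln\lambda\cdot\dot F_o$ contribution from $\ddot F_o$ cancels against the derivative of the third (mixed) term while the nonlinear contribution cancels against the derivative of the first term.

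Next I would establish the boundary conditions. The identities $\dot F_o(0+) = C_e$ and $\dot F_o(1-) = 0$ follow from differentiating the integral representation \eqref{relationFo}, namely $\dot F_o(\ex) = C_e F_e^\Delta(1-\ex)\rn(\ex)$: at $\ex = 0+$, using $\rn(0+) = \lambda^0 = 1$ and $F_e(1) = 1$ from Corollary \ref{cor:Limits}, we get $\dot F_o(0+) = C_e\cdot 1\cdot 1 = C_e$; at $\ex = 1-$, using $\rn$ evaluated there (which is $\lambda^1$, finite) and $F_e(0) = 0$ from Corollary \ref{cor:Limits}, we get $\dot F_o(1-) = C_e\cdot 0\cdot\lambda = 0$. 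This gives \eqref{genbc-2}. The condition \eqref{genbc-3}, that $\inf\{t > 0: F_o(t) = 1\} = 1$, follows because $F_o$ is the distribution function of a random variable supported on $[0,1]$ (so $F_o(1) = 1$) and because $\dot F_o(\ex) = C_e F_e^\Delta(1-\ex)\rn(\ex)$ is strictly positive on $(0,\ve)\cup(1-\ve,1)$ and zero on $(\ve,1-\ve)$ — wait, I must be careful here: on $(\ve,1-\ve)$ the density $\rn$ vanishes, so $F_o$ is constant there, and strict increase resumes on $(1-\ve,1)$; since $F_e(1-\ex) > 0$ for $\ex < 1$, $F_o$ does not reach the value $1$ before $\ex = 1$. (For the special case $\ve = 1/2$ the flat middle interval degenerates to a point and $F_o$ is strictly increasing throughout $(0,1)$.)

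Finally, for \eqref{AR-bc}, I would evaluate $\AR_\lambda$ at the two endpoints using the just-established boundary values. At $z = 0+$: $\lambda^0 = 1$, $F_o(0) = 0$ kills the first and third terms, and $\dot F_o(0+) = C_e$ leaves $\AR_\lambda(0+) = (\theta_e\dot F_o(0+))^{(\Delta+1)/\Delta} = (\theta_e C_e)^{(\Delta+1)/\Delta}$. At $z = 1-$: $\dot F_o(1-) = 0$ kills the second and third terms, and $F_o(1) = 1$ in the first term gives $\AR_\lambda(1-) = \lambda^{-1}\theta_e^{\Delta+1}$. This yields \eqref{AR-bc}. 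The main obstacle I anticipate is the first-integral verification — ensuring that the fractional powers and the three $\lambda$-exponents $\lambda^{-z}$, $\lambda^{-z/\Delta}$, $\lambda^{-z/(\Delta+1)}$ combine correctly; it is worth double-checking the exponent bookkeeping, since a single misplaced factor of $\Delta$ or $\ln\lambda$ would break the cancellation. A secondary subtlety is that $\AR_\lambda$ is asserted constant on each interval \emph{separately} (not globally), which is consistent with the fact that $F_o$ need not be $C^2$ across $\ex = \ve$ or $\ex = 1-\ve$ where $\rn$ is discontinuous; I would make sure the statement and its use downstream respect this.
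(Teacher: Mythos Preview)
Your proposal is correct and complete, but it takes a different route from the paper for the central claim that $\AR_\lambda$ is constant. The paper does not differentiate $\AR_\lambda$ directly; instead it (i) rewrites \eqref{ode-second} as a non-autonomous first-order system for $(g_1,g_2)=(F_o,\dot F_o)$, (ii) applies the change of variables $h_1=\lambda^{-z/(\Delta+1)}\theta_e g_1$, $h_2=\lambda^{-z/(\Delta(\Delta+1))}\theta_e^{1/\Delta}g_2^{1/\Delta}$ to reduce to an \emph{autonomous} planar system, and (iii) observes that this system is Hamiltonian with first integral $\Phi(y_1,y_2)=y_1^{\Delta+1}+y_2^{\Delta+1}-(\ln\lambda)y_1y_2$; then $\AR_\lambda=\Phi\circ h$. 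Your direct-differentiation argument is shorter and entirely adequate once $\AR_\lambda$ is \emph{given}, while the paper's detour explains where $\AR_\lambda$ comes from and why $\theta_e$ enters with its particular exponent---the change of variables is chosen precisely to kill the explicit $z$-dependence in \eqref{eq:G3}. For the boundary conditions \eqref{genbc-2}--\eqref{AR-bc} your argument matches the paper's essentially verbatim (the paper likewise reads off $\dot F_o(0+),\dot F_o(1-)$ from \eqref{relationFo} and argues $F_e(1-z)>0$ via $\dot F_e(0+)=C_o>0$). Your caution about $\AR_\lambda$ being constant only on each interval separately, and about the exponent bookkeeping, is well placed.
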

\begin{proof}
Since the  the density $\rn$ in \eqref{rn-form} is continuously differentiable
on the intervals $(0,\ve)$ and $(1-\ve,1)$, and the 
corresponding free spin measure puts strictly positive mass on
$[0,1]$, 
 it follows from
Proposition \ref{prop-ode2} that $F_o$ is twice continuously
differentiable and satisfies \eqref{ode-second} on each of those
intervals. 
The proof of the first assertion of the proposition proceeds in three steps.  \\
{\em Step 1. } We first  recast the second-order ODE for $F_o$ in \eqref{ode-second} as a system of
non-autonomous first-order ODEs. 
Consider $g(\ex) =
(g_1(\ex),g_2(\ex))$ $\mean (F_o(\ex),\dot
F_o(\ex))$, which lies in $\R_+^2$ since $F_o$ is nonnegative and
nondecreasing.  
Let ${\mathcal I} = (0,\ve) \cup (1-\ve,1)$ if $\ve < 1/2$ and 
let ${\mathcal I} = (0,1)$ if $\ve = 1/2$. 
Since $\rn$ is  continuously differentiable and $\rn(\ex) = (\ln
\lambda) \lambda^\ex$ on ${\mathcal I}$, $F_o$  satisfies 
the second-order ODE in \eqref{ode-second}, which is equivalent to
saying that $g$ satisfies the following system of {\em non-autonomous} first-order
ODEs on ${\mathcal I}$: 
\begin{align}\label{eq:G1}
\dot g(\ex)=G(g(\ex),\ex) \mean (G_1(g_1(\ex), g_2(\ex), \ex),
G_2(g_1(\ex), g_2(\ex), \ex)), 
\end{align}
where for $i = 1, 2,$ $G_i:\R_+^3\rightarrow \R$ are  defined by 
\begin{align}
G_1(y_1,y_2,\ex)&\mean y_2 \label{eq:G2}\\
G_2(y_1,y_2,\ex)&\mean (\ln\lambda)y_2 -
C_oC_e^{1\over \Delta}\Delta  \lambda^{{\ex\over \Delta}} \lambda^{1-\ex}y_2^{\Delta-1\over \Delta}y_1^\Delta, \label{eq:G3}
\end{align}

\noindent 
{\em Step 2. } 
Next, we reparametrize the system of ODEs above to eliminate the
explicit dependence of $G_2$ on $\ex$ in \eqref{eq:G3}. 
Namely, we reformulate the system of ODEs as an \emph{autonomous} system. 
Consider the transformation $\Lambda: (g_1,g_2)\mapsto (h_1,h_2)$ defined by
\begin{align}
h_1(\ex)&=\lambda^{-{\ex\over \Delta+1}}\theta_e g_1(\ex) \label{eq:h1}\\
h_2(\ex)&=\lambda^{-{\ex\over \Delta(\Delta+1)}}\theta_e^{1\over\Delta} (g_2(\ex))^{1\over\Delta}. \label{eq:h2}
\end{align}
We now claim that  $(h_1(\cdot),h_2(\cdot))$
satisfies the following system of ODEs: 
\begin{align}\label{eq:H1}
\dot h(\ex)=H(h(\ex)),  \quad \ex \in (0,1), 
\end{align}
where $H:\R_+^2 \rightarrow \R^2$ is defined by
$H(y_1,y_2)\mean (H_1(y_1,y_2),H_2(y_1,y_2))$, with 
\begin{align}
H_1(y_1,y_2)&\mean -{\ln\lambda\over \Delta+1}y_1+y_2^\Delta, \label{eq:H2}\\
H_2(y_1,y_2)&\mean {\ln\lambda\over \Delta+1}y_2-y_1^\Delta. \label{eq:H3}
\end{align}
The proof is obtained using a fairly straightforward verification. For
$z \in (0,1)$, we have using \eqref{eq:h1}-\eqref{eq:h2}, 
and $\dot g_1(\ex)=g_2(\ex)$ from
\eqref{eq:G1}-(\ref{eq:G2}), 
\begin{align*}
\dot h_1(\ex)&=-{\ln\lambda\over\Delta+1}\lambda^{-{\ex\over \Delta+1}}\theta_e g_1(\ex)+
\lambda^{-{\ex\over \Delta+1}}\theta_e g_2(\ex)\\
&=-{\ln\lambda\over\Delta+1}h_1(\ex)+(h_2(\ex))^\Delta.
\end{align*}
This verifies (\ref{eq:H2}).
Similarly, applying  (\ref{eq:G1}) and (\ref{eq:G3}) together with
\eqref{eq:h1}-\eqref{eq:H1}, and \eqref{eq:H3} we obtain
\begin{align*}
\dot h_2(\ex)&=-{\ln\lambda\over\Delta(\Delta+1)}\lambda^{-{\ex\over
    \Delta(\Delta+1)}}\theta_e^{1\over\Delta}
(g_2(\ex))^{1\over\Delta} \\
&\quad +\lambda^{-{\ex\over \Delta(\Delta+1)}}\theta_e^{1\over\Delta}\Delta^{-1} (g_2(\ex))^{(1-\Delta)\over\Delta}  
(\ln\lambda)g_2(\ex)\\
& \quad  -\lambda^{-{\ex\over \Delta(\Delta+1)}}\theta_e^{1\over\Delta}\Delta^{-1} (g_2(\ex))^{(1-\Delta)\over\Delta} 
C_oC_e^{1\over \Delta}\Delta  \lambda^{{\ex\over
    \Delta}}\lambda^{1-\ex} (g_2(\ex))^{(\Delta-1)\over\Delta}  
(g_1(\ex))^\Delta \\
&=-{\ln\lambda\over\Delta(\Delta+1)}h_2(\ex)
+{\ln\lambda\over\Delta} h_2(\ex)-
\theta_e^{1\over\Delta}\C_oC_e^{1\over \Delta}\lambda
\theta_e^{-\Delta}(h_1(\ex)) ^\Delta \\
&={\ln\lambda\over\Delta+1}h_2(\ex)-(h_1(\ex)) ^\Delta,
\end{align*}
where the last equality uses  definition \eqref{thetaoe} 
of $\theta_e$. 
This verifies (\ref{eq:H3}).

\noindent 
{\em Step 3. } 
Next, we show that the system (\ref{eq:H1})-(\ref{eq:H3}) is a
Hamiltonian system of ODEs, in the sense that if 
$h(\cdot)=(h_1(\cdot),h_2(\cdot))$ is a solution of 
(\ref{eq:H1})-(\ref{eq:H3}) on some interval, then 
  the function 
\[ \Phi\circ h( \ex) = (h_1(\ex))^{\Delta+1}+(h_2(\ex))^{\Delta+1}-(\ln\lambda)
h_1(\ex)h_2(\ex) \] 
is constant on that interval, 
where $\Phi:(y_1, y_2) \mapsto \R$ is defined by 
\[ \Phi (y_1, y_2) \mean y_1^{\Delta+1}  + y_2^{\Delta+1} - (\ln
\lambda) y_1 y_2. 
\]
Indeed, note that on substituting the
expressions for $\dot{h}_1$ and $\dot{h}_2$ obtained above, we have on
this interval, 
\begin{align*}
\dot\Phi \circ h &=(\Delta+1)h_{1}^{\Delta}\dot h_{1}+(\Delta+1)h_{2}^{\Delta}\dot h_{2}-(\ln\lambda)(\dot h_{1}h_{2}+h_{1}\dot h_{2}) \\
&=(\Delta+1)h_{1}^{\Delta}\left(-{\ln\lambda\over \Delta+1}h_1+h_2^\Delta\right)+
(\Delta+1)h_{2}^{\Delta}\left({\ln\lambda\over \Delta+1}h_2-h_1^\Delta\right) \\
&\quad \,  -(\ln\lambda)\left(-{\ln\lambda\over \Delta+1}h_1+h_2^\Delta\right)h_{2}-
(\ln\lambda)\left({\ln\lambda\over \Delta+1}h_2-h_1^\Delta\right)h_{1} \\
&=0.
\end{align*}
The first assertion of the proposition then follows on 
substituting the definition of $h_i$ and $g_i$, $i = 1, 2,$ from Steps
1 and 2 into 
the expression for $\Phi \circ h$ in Step 3.  

Next, note that the boundary conditions in \eqref{genbc-2} follows on substituting the form \eqref{rn-form} of $\mu$
into \eqref{relationFo}.   
When combined with the boundary condition
$F_o(0+)  = F_o(0) = 0$ and $F_o(1-) = F_o(1) = 1$ from Corollary
\ref{cor:Limits}, this implies \eqref{AR-bc}.   
Finally, define $\tau  = \inf \{t > 0: F_o(t) = 1\}$.   Then $F_o(1) =
1$ implies that $\tau \leq
1$.    But one must
have $\dot{F}_o(z) = 0$ for $z > \tau$.  Thus, to prove 
\eqref{genbc-3} it suffices to show that $\dot{F}_o(z) > 1$ for all $z
< 1$. 
Now,  by \eqref{relationFo} 
for $z \in (0, 1)$, 
 $\dot{F}_o(z) = C_e F_e^{\Delta} (1-z) \lambda^z$. 
However, this is strictly
 positive 
because   by symmetry and
 \eqref{genbc-2} it follows that $\dot{F}_e (0) = C_o > 0$, and hence, 
 $F_e (1-z) > 0$ for all $0 < z <  1$. 
This establishes \eqref{genbc-3} and concludes the proof. 
\end{proof}

\subsection{Proof of Uniqueness of Gibbs Measures}
\label{subs-pf}

By Remark \ref{rem-whatsleft}, to prove Theorems 
\ref{theorem:MainResult} and \ref{theorem:MainResult2}, it suffices 
to show that the constants $C_{o}$ and $C_{e}$ in \eqref{eq:C_o} and 
\eqref{eq:C_e}, respectively, are
equal when $\rn$ is given by \eqref{rn-form}, with 
$\ve = 1/2$ and $\ve \in (0,1/2)$, respectively.
In each case, we will use the invariance property 
in Proposition \ref{prop-hamiltonian} to establish this equality.

\begin{proof}[Proof of Theorem~\ref{theorem:MainResult}]
Set $\ve = 1/2$.  Then $\rn$ is continuously differentiable on $(0,1)$
and the function $\AR$ in  Proposition \ref{prop-hamiltonian} is constant 
on the entire interval $(0,1)$. 
 Thus,  setting  $\AR(0+) = \AR(1-)$ in 
 \eqref{AR-bc},  we conclude that  $\theta_e^{\frac{\Delta^2-1}{\Delta}}
= \lambda C_e^{\frac{\Delta+1}{\Delta}}$.  Substituting the value of
  $\theta_e$ from \eqref{thetaoe} into this equation, one concludes
  that  $C_o = C_e$, which   completes the proof. 
\end{proof}

\begin{proof}[Proof of Theorem~\ref{theorem:MainResult2}]
Now, suppose $\ve \in (0,1/2)$.  Then $\rn$ is continuously
differentiable on the intervals $(0,\ve)$ and $(1-\ve, 1)$ and 
so Proposition \ref{prop-hamiltonian} implies 
\begin{equation}
\label{AR-relations}
\AR(0+) = \AR(\ve-), \qquad \mbox{  and } \qquad  \AR((1-\ve)+) = \AR(1-). 
\end{equation}
On the other hand, since $\rn$ is zero on $(\ve, 1-\ve)$, it follows
from \eqref{relationFo}-\eqref{relationFe} that both 
 $F_o$ and $F_e$  are constant on $(\ve, 1- \ve)$.  
In turn, this implies that 
\[  \dot{F}_o (\ve-) = \frac{C_e}{2 \ve}  F_e^{\Delta} (1-\ve)
\lambda^\ve = \frac{C_e}{2 \ve} F_e^{\Delta} (\ve) \lambda^\ve = \dot{F}_o ((1-\ve)+)
\lambda^{2 \ve - 1}. 
\]
Now, if $\lambda = 1$, then these identities and 
the definition of $\AR$ imply 
that $\AR (\ve+) = \AR ((1-\ve)-)$.   Together with
\eqref{AR-relations} and 
\eqref{AR-bc}  this implies 
\[ (\theta_e C_e)^{\frac{\Delta+1}{\Delta}} =\theta_e^{\Delta+1}
\quad \Leftrightarrow \quad (\theta_e)^{\frac{\Delta^2-1}{\Delta}} = \lambda
C_e^{\frac{\Delta+1}{\Delta}}. 
\]
When combined with \eqref{thetaoe}, this shows that $C_e = C_o$.
\end{proof}

\section{Marginal Distributions of the Continuous Hard-core Model}
\label{sec:char}

\subsection{The case $\lambda = 1$: Proof of Theorem \ref{th:DiffEq}}
\label{eq:DE-Uniqueness}

Note that the problem concerns the one-parameter family of ODEs
\begin{align}\label{eq:Fexplicit}
\dot F_{C} (z)=  b(C, F_{C}(z)), 
\end{align}
where  the parameterized family of drifts $b:[0,\infty) \times [0,1]
\mapsto \R_+$  is given by 
\begin{align}
\label{drift}
 b(C,y)  \mean C\left(1-y^{\Delta+1}\right)^{\Delta\over \Delta+1}. 
\end{align}
For any fixed $C > 0$, the function $y \mapsto b(C,y)$ 
 is a Lipschitz continuous function on $(0,1-\delta)$ for any  $\delta
 \in (0,1)$.  Thus there exists a unique solution $F_C$ to the ODE
 \eqref{eq:Fexplicit}
 with boundary condition 
 \begin{equation}
\label{bc1:Fexplicit}
F_C(0) = 0, 
\end{equation}
 on the interval $[0,\tau_C-\delta)$,  where 
\begin{align}
\label{tauC}
  \tau_C \mean \inf \{ t > 0: F_{C} (t) = 1 \}. 
\end{align}
Here,  the infimum over an empty set is taken to be infinity. 
 Since \eqref{ODE} implies
 that $F_C$ is constant after
 $\tau_C$ (if $\tau_C < \infty$), by continuity there is a unique
 continuous solution $F_C$ to
 \eqref{eq:Fexplicit} and \eqref{bc1:Fexplicit} on $[0,\infty)$. 
 
We now show existence of a $C >0$ for which the unique solution $F_C$ to
\eqref{eq:Fexplicit} and \eqref{bc1:Fexplicit} also 
satisfies the boundary condition
\begin{align}
\label{bc2:Fexplicit}
\tau_C = 1. 
\end{align}  
We fix $\lambda = 1$ and $\Delta \geq 1$ and consider the continuous
hardcore model with parameter $\lambda$ and $\Delta$.   
From the proof of Theorem \ref{theorem:MainResult}, it follows that the
constants $C_o, C_e \in (0,\infty)$ defined in \eqref{eq:C_o} and
\eqref{eq:C_e}, respectively, are equal. We denote the common
value by $C_{\Delta,1}$, and  let $\Theta_{\Delta,1}$ denote the corresponding common value
of $\theta_e = \theta_o$ in \eqref{thetaoe}.  Further, let 
$F_{\Delta,1}$ denote the corresponding 
$F_e$, which coincides with $F_o$ by Remark \ref{rem-whatsleft}.  
By  Proposition \ref{prop-hamiltonian}, we have 
\[ (\Theta_{\Delta,1} C_{\Delta,1})^{\frac{\Delta+1}{\Delta}} = R_1(0) =  R_1(z) 
= (\Theta_{\Delta,1} (\Theta_{\Delta,1} F_{\Delta,1}(z))^{\Delta+1} + (\Theta_{\Delta,1} \dot{F}_{\Delta,1}
(z))^{\frac{\Delta+1}{\Delta}}, 
\] 
for every $z \in (0,1)$. 
Noting from  \eqref{thetaoe} that $\Theta_{\Delta,1} =
C_{\Delta,1}^{\frac{1}{\Delta-1}}$ and rearranging terms above, this
implies that $F_{\Delta,1}$ satisfies the ODE \eqref{ODE} 
when $C = C_{\Delta,1}$.  
Furthermore, $F_{\Delta, 1} (0) = 0$ by Corollary \ref{cor:Limits} and
hence, $F_{\Delta,1}$ 
is the unique solution  $F_{C_{\Delta,1}}$ to  \eqref{eq:Fexplicit} and 
\eqref{bc1:Fexplicit}.   Furthermore, it follows from 
\eqref{genbc-3} that $ \tau_{C_{\Delta,1}} = 1, $ and thus we have
shown that 
\eqref{eq:Fexplicit}, \eqref{bc1:Fexplicit} and \eqref{bc2:Fexplicit} are satisfied when $C =C_{\Delta,1}$.

To prove Theorem \ref{th:DiffEq},  it only remains to prove that 
there is a  unique constant $C$ (equal to $C_{\Delta,1}$) for 
which the  unique solution $F_C$ to  \eqref{eq:Fexplicit} and 
\eqref{bc1:Fexplicit} also satisfies  \eqref{bc2:Fexplicit}. 
Our next result shows that this is the case. 

\begin{prop}\label{prop:Funique}
Given $C > 0$, let $F_C$ be the unique solution to
\eqref{eq:Fexplicit} and \eqref{bc1:Fexplicit}  on $[0,\infty)$, 
and define $\tau_C$ as in \eqref{tauC}.  
 The function  $[0,\infty) \ni C \mapsto \tau_{C}$ is strictly
 decreasing and continuous with range $\R_{+}$. 
  In particular, there exists a unique
$C^{*} > 0$ such that $\tau_{C^{*}}=1$.
\end{prop}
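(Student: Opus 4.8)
The plan is to analyze the dependence of the solution $F_C$ on the parameter $C$ by exploiting the monotonicity of the drift $b$ in $C$, and then to extract continuity and surjectivity of $C \mapsto \tau_C$ via a comparison/sandwiching argument.

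\emph{Monotonicity.} First I would observe that for fixed $y \in [0,1)$, the map $C \mapsto b(C,y) = C(1-y^{\Delta+1})^{\Delta/(\Delta+1)}$ is strictly increasing, and $b(C,y) > 0$ on $[0,1)$. By a standard ODE comparison principle (using that $b(C,\cdot)$ is locally Lipschitz on $[0,1)$, so solutions are unique and comparison applies), if $C_1 < C_2$ then the corresponding solutions satisfy $F_{C_1}(z) \le F_{C_2}(z)$ for all $z \ge 0$, with strict inequality for $z > 0$ as long as both lie in $[0,1)$. Consequently $\tau_{C_2} \le \tau_{C_1}$; to upgrade this to \emph{strict} monotonicity one notes that if $F_{C_2}$ reaches $1$ at some finite time $\tau_{C_2}$, then just before $\tau_{C_2}$ the solution $F_{C_1}$ is strictly below $F_{C_2} < 1$, and since $F_{C_1}$ grows at a strictly slower rate it cannot catch up to $1$ at the same time; a short quantitative estimate of the time needed to traverse a fixed sub-interval $[1-2\delta, 1-\delta]$ makes this precise. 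This gives that $C \mapsto \tau_C$ is strictly decreasing on $\{C : \tau_C < \infty\}$.

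\emph{Range and continuity.} Next I would compute the two extremes. As $C \to \infty$, the solution is driven to $1$ arbitrarily fast, so $\tau_C \to 0$; more concretely, $\tau_C = \int_0^1 \frac{dy}{b(C,y)} = \frac{1}{C}\int_0^1 (1-y^{\Delta+1})^{-\Delta/(\Delta+1)}\,dy$, and the integral $I_\Delta := \int_0^1 (1-y^{\Delta+1})^{-\Delta/(\Delta+1)}\,dy$ is \emph{finite} (the singularity at $y=1$ is integrable since the exponent $\Delta/(\Delta+1) < 1$). This is the cleanest route: in fact $\tau_C = I_\Delta / C$ exactly, by separation of variables, which simultaneously yields strict monotonicity, continuity, and the range $(0,\infty)$ all at once. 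So the proof reduces to the single observation that $b(C,y) = C \cdot b(1,y)$ is separable with $\int_0^1 b(1,y)^{-1}\,dy < \infty$. Then $\tau_C = I_\Delta/C$ is manifestly a continuous, strictly decreasing bijection from $(0,\infty)$ onto $(0,\infty)$, so there is a unique $C^*$ with $\tau_{C^*} = 1$, namely $C^* = I_\Delta$; and by the discussion preceding the proposition this $C^*$ must equal $C_{\Delta,1}$.

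\emph{Main obstacle.} The only subtlety is the finiteness of $I_\Delta$ at the endpoint $y = 1$: one must check $(1-y^{\Delta+1})^{-\Delta/(\Delta+1)}$ is integrable near $1$, which follows since $1 - y^{\Delta+1} \sim (\Delta+1)(1-y)$ as $y \uparrow 1$, giving an integrable singularity of order $(1-y)^{-\Delta/(\Delta+1)}$ with exponent strictly less than $1$. I would also briefly justify that the implicit-definition step (continuity of $F_C$ past $\tau_C$, where $b(C,1) = 0$ forces $F_C$ to stay at $1$) is consistent, but this is already established in the paragraph preceding Proposition~\ref{prop:Funique}. Everything else is routine.
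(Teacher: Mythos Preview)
Your argument is correct, and substantially simpler than the paper's. The key insight you identify---that the ODE is separable with $b(C,y) = C \cdot b(1,y)$, so that
\[
\tau_C \;=\; \int_0^1 \frac{dy}{b(C,y)} \;=\; \frac{1}{C}\int_0^1 (1-y^{\Delta+1})^{-\Delta/(\Delta+1)}\,dy \;=\; \frac{I_\Delta}{C},
\]
with $I_\Delta < \infty$ because the singularity at $y=1$ is of order $(1-y)^{-\Delta/(\Delta+1)}$---immediately delivers strict monotonicity, continuity, and range $(0,\infty)$ in one stroke, and even yields the closed form $C^* = I_\Delta$.

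The paper takes a much more laborious route. It first bounds $\tau_C \ge 1/C$ and proves $\tau_C \to 0$ as $C \to \infty$ via a discrete summation estimate on the increments $\sigma_C(1/n) - \sigma_C(1/(n-1))$ (essentially a Riemann-sum version of your integral $I_\Delta$, without recognizing it as such). It then establishes strict monotonicity and continuity via a full sensitivity analysis of the parameterized ODE family: computing $R_C(z) = \partial F_C(z)/\partial C$, solving the associated linear variational ODE, invoking the implicit function theorem to get $C \mapsto \sigma_C(1/n)$ continuously differentiable with negative derivative, and finally patching this with a tail estimate near $y=1$ to transfer these properties to $\tau_C$ itself. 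That machinery would be necessary for the non-separable second-order equation arising when $\lambda \ne 1$ (cf.\ Conjecture~\ref{conjecture:DiffEq}), which may explain why the authors set things up this way, but for the case at hand your separation-of-variables argument is both shorter and strictly more informative.

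Your opening comparison-principle paragraph is correct but, as you note yourself, rendered redundant by the explicit formula; you could drop it entirely.
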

\begin{proof}  

The proof entails 
 two main steps. \\
{\em Step 1: }  We show $\lim_{C \downarrow 0} \tau_C =
\infty$ and $\lim_{C \uparrow \infty} \tau_C = 0$. \\
First, observe that $\tau_{C}\ge 1/C$ since $F(0)=0$ and $\dot F(z)
\le C$ for all $z$. 
Thus $\tau_{C}\rightarrow\infty$ as $C\rightarrow 0$.  
Next, set $\sigma_{C}(0)  \mean 0$ and for $\delta > 0$, define 
\begin{equation}
\label{sigmaC}
\sigma_{C}(\delta) \mean \inf \left\{ z > 0: F_C (z) = 1 - \delta  \right\}. 
\end{equation}
Observe that the set of such $z$ is non-empty since for every $z$ such that $F_C(z)<1-\delta$ 
we have the uniform lower bound $\dot{F}_C(z)>C(1-(1-\delta)^{\Delta+1})^{\Delta\over\Delta+1}>0$. 
Now,  for $z \in [\sigma_C(1/(n-1)),
\sigma_C(1/n)]$, $\frac{n-2}{n-1} \leq F_C(z) \leq \frac{n-1}{n}$,
and hence, 
\[   \dot{F}_C(z) =  C \left( 1 - F_C^{\Delta+1}(z)\right)^{\Delta
  /(\Delta+1)}   = 
\frac{C(\Delta+1)^{\frac{\Delta}{\Delta+1}}}{n^{\frac{\Delta}{\Delta+1}}} +
o \left( \frac{1}{n^{\frac{\Delta}{\Delta+1}}} \right), 
\]
where $o(\ve)$ represents a quantity that vanishes as $\ve \rightarrow
0$. 
Using  the identity 
\[ \frac{1}{n-1} - \frac{1}{n} =  F_C(\sigma_{C} (1/(n-1)) - F_C
(\sigma_C(1/n)) = \int_{\sigma_C(1/(n-1))}^{\sigma_C(1/n)} \dot{F}_C (z)
dz, 
\]
we obtain the estimate 
\begin{equation}
\label{sigma-diff}
 \sigma_{C} \left( \frac{1}{n}\right)  - \sigma_C \left( \frac{1}{n-1}\right)  = \left( C
  (\Delta+1)^{\frac{\Delta}{\Delta+1}}
  n^{\frac{\Delta+2}{\Delta+1}}\right)^{-1} + o\left(
\frac{1}{n^{\frac{\Delta+2}{\Delta+1}}} \right). 
\end{equation}
In turn, since $\tau_C =  \sum_{n=1}^\infty \left[\sigma_{C}\left(\frac{1}{n}\right) -
  \sigma_{C}\left(\frac{1}{n-1}\right)\right]$, this  implies that 
\begin{align*}
C\tau_C =  (\Delta+1)^{-\frac{\Delta}{\Delta+1}} \sum_{n=1}^\infty
n^{-\frac{\Delta+2}{\Delta+1}} + o(1) < \infty, 
\end{align*}
which shows that $\tau_C \rightarrow 0$ as $C \rightarrow \infty$.  
This concludes the proof of Step 1.   

Before proceeding to Step 2, observe that in a similar fashion, for  $\delta \in
[1/n, 1/(n-1)]$, we have 
\begin{align*}  
\sum_{m \geq n}  \left[\sigma_C \left(  \frac{1}{m}\right) - \sigma_C
  \left(  \frac{1}{m-1}\right) \right]&\leq \tau_C - \sigma_C (\delta)
\\
& \leq \sum_{m \geq n-1}  \left[\sigma_C \left(
  \frac{1}{m}\right) - \sigma_C \left(  \frac{1}{m-1}\right) \right]. 
\end{align*}
Combining this with  the estimate \eqref{sigma-diff} and the fact that
for both $k = n$ and 
$k=n+1$, the sum $\sum_{m\ge k} {m^{-{\Delta+2\over \Delta+1}}}$ 
is of the order  $O(\frac{\Delta+1}{\Delta+2}n^{-\frac{1}{\Delta+1}})$, 
we conclude that 
\begin{equation}
\label{rate-delta}
 \tau_C - \sigma_C(\delta) = \frac{(\Delta+1)^{\frac{1}{\Delta+1}}}{C(\Delta+2)} \delta^{\frac{1}{\Delta+1}} +
o(\delta^{\frac{1}{\Delta+1}}). 
\end{equation}

\noindent
{\em Step 2. }  We show  that $C \mapsto \tau_C$ is strictly
decreasing and continuous. \\
 First, note that  for $b$ given in \eqref{drift}, we have 
\begin{align}
\label{pders}
  \frac{\partial b}{\partial C}(C,y) =
(1-y^{\Delta+1})^{\frac{\Delta}{\Delta+1}}, \qquad  \frac{\partial
  b}{\partial y} = -C \Delta (1 - y^{\Delta+1})^{-\frac{1}{\Delta+1}}
y^\Delta. 
\end{align}
Thus,  $b$ is continuously differentiable 
with bounded partial derivatives on $[0,\infty) \times [0,1-\frac{1}{n}]$ for
every $n \in \mathbb{N}$. 
Then, by standard sensitivity analysis for parameterized ODEs
we know that for every $n \in \mathbb{N}$,  on $[0,\sigma_C(\frac{1}{n})]$, $R_C (z) \mean \partial F_C(x)/\partial C$
exists and satisfies 
\begin{align*}
\frac{\partial R_C}{\partial z} (z) = \frac{\partial^2 F_C}{\partial
  C\partial z} (z) &= \frac{\partial}{\partial
  C} b (C, F_C(z)) \\
&= \frac{\partial b}{\partial C} (C, F_C(z)) + \frac{\partial
  b}{\partial y} (C, F_C(z)) \frac{\partial F_C}{\partial C} (z), 
\end{align*}
which yields   the following first-order inhomogeneous linear ODE for
$R_C$: 
\begin{align}
\label{sens-ode} 
 \frac{\partial R_C}{\partial z} (z) &= \frac{\partial b}{\partial C} (C, F_C(z)) + \frac{\partial
  b}{\partial y} (C, F_C(z)) R_C(z). 
\end{align}
Moreover, since $F_C(0) = 0$ for all $C$, $R_C$ satisfies  the boundary condition 
\begin{equation}
\label{sens-bc}
   R_C(0) = 0.
\end{equation}
Solving the linear ODE \eqref{sens-ode}-\eqref{sens-bc}, we
obtain 
\[  R_C (z) = \int_0^z e^{\int_x^z \frac{\partial b}{\partial y} (C,
  F_C(t)) dt} \frac{\partial b}{\partial c} ( C, F_C (x)) \, dx. 
\] 
Substituting the partial derivatives of $b$ from \eqref{pders}, 
we have for  $z  \in (0, \tau_C)$, 
\begin{align}
\label{RC}
 R_C (z)  &= \int_0^z e^{-C\Delta\int_x^z
  (1-(F_C(t))^{\Delta+1})^{-\frac{1}{\Delta+1}} (F_C(t))^\Delta dt}
(1 - (F_C(x))^{\Delta+1})^{\frac{\Delta}{\Delta+1}} dx  > 0. 
\end{align}
Now, fix $n \in\mathbb{N}$ and  recall from \eqref{sigmaC} that $F_C(\sigma_C(1/n)) = 1- 1/n$.  
Since $(C,z)\mapsto F_C(z)$ is continuously differentiable, and by \eqref{eq:Fexplicit}-\eqref{drift} for 
any fixed $C_0 > 0$,  $\frac{\partial F_{C_0}}{\partial z}
(\sigma_{C_0} (1/n)) 
> 0$, it follows from the implicit function theorem 
%
that $C \mapsto \sigma_C(1/n)$ is continuously differentiable, 
and \eqref{RC} then implies that 
\begin{equation}
\label{pder-sigmaC}
\frac{d (\sigma_C(1/n))}{dC} < 0.
\end{equation} 
Now, for any $C < \infty$, fix  $C_- < C < C_+$.   Then for all sufficiently large $n$, it
follows  from \eqref{rate-delta} that 
\[  \tau_{C_+} - \sigma_{C_+} (1/n) \leq \tau_{C} - \sigma_{C}
(1/n) \leq \tau_{C_-} - \sigma_{C_-} (1/n). 
\]
Since \eqref{pder-sigmaC}  implies that $\sigma_{C+} (1/n) < \sigma_C
(1/n) < \sigma_{C-} (1/n)$, this shows that
$\tau_{C-} > \tau_{C} > \tau_{C+}$, 
namely $C \mapsto \tau_C$ is strictly
decreasing on 
$(0,\infty)$. 
Finally, to show that $\tau$ is continuous, fix $C > 0$, and  given 
$\ve  > 0$,  note that \eqref{rate-delta} shows that there exists a
sufficiently large $n$, such that for all $\eta < C$ and 
$\tilde{C} \in [C-\eta, C+\eta]$, 
\[  \left|\tau_{\tilde{C}} - \sigma_{\tilde{C}} (1/n) - (\tau_{C} -\sigma_{C}(1/n))\right| \leq \frac{\varepsilon}{2}. 
\]
Since, as shown above,  $C \mapsto \sigma_{C} (1/n)$ is
continuous (in fact, continuously differentiable),  there exists
$\delta < 1$ such that whenever $|\tilde{C} - C| < \delta$, 
$|\sigma_{\tilde{C}}(1/n) - \sigma_{C} (1/n)| <
\frac{\varepsilon}{2}$, and hence,  $|\tau_{\tilde{C}} - \tau_{C}|
<\varepsilon.$ This shows that $C \mapsto \tau_C$ is continuous, and
concludes the proof of Step 2.

Finally, we note that by Step 1 and the continuity of $\tau_C$
established in Step 2,  $\{\tau_{C}, C \in (0,\infty)\} =
(0,\infty)$. 
Since $C \mapsto \tau_{C}$ is a strictly decreasing continuous
function by Step 2, this implies the  existence of a unique $C^{*}$ with $\tau_{C^*}
= 1$. This completes the proof of Proposition~\ref{conjecture:DiffEq}.
\end{proof}

\subsection{A Conjecture for General $\lambda > 0$} 
\label{subs-conjecture}

Fix $\lambda > 0, \Delta \geq 1$, and let $F = F_{\Delta,\lambda} = F_o$ and
$C = C_o = C_e$ be 
the limiting function and constant, respectively,   
from Theorem \ref{theorem:MainResult}. 
Then the free spin measure with density $\rn (z) = \lambda^z$ satisfies the conditions 
of Proposition \ref{prop-ode2} with ${\mathcal I} = (0,1)$ and so it follows from 
 \eqref{ode-second} that $F$ satisfies the second-order ODE 
\be
\label{eqn1.0}
\ddot{F}(z) =    (\ln\lambda) \dot F (z) - C^{{1\over \Delta}+1} \Delta \lambda^{(1-z)}
\lambda^{z/\Delta}  \left(\dot F (z)\right)^{1 - \frac{1}{\Delta}} F^\Delta (z),
\ee  
for $z \in (0,1)$.   Moreover, Corollary \ref{cor:Limits},
\eqref{genbc-2} and \eqref{genbc-3} show that $F$ also satisfies the boundary
conditions  
\begin{equation}\label{bc.1}
\left\{ 
 \begin{array}{l}
  \, F(0) = 0, \\
  \, \inf \{ t  > 0: F(t) = 1 \}  = 1   \\
  \, \dot F(0+) = C,  \\
  \, \dot F(1) = 0. 
 \end{array}
\right. 
\end{equation}

We conjecture the following generalization of Theorem
\ref{th:DiffEq} holds, but defer  investigation of its validity to future work. 

\begin{conjecture}\label{conjecture:DiffEq}  
There exists a unique $C_{\Delta, \lambda} > 0$ for which the
ODE \eqref{eqn1.0}-\eqref{bc.1} admits a solution, and 
$F_{\Delta, \lambda}$ is a twice continuously differentiable function
that is the unique solution to  \eqref{eqn1.0}-\eqref{bc.1} with 
$C = C_{\Delta, \lambda}$. 
\end{conjecture}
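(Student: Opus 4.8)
The plan is to run the strategy behind Theorem~\ref{th:DiffEq} (i.e.\ behind Proposition~\ref{prop:Funique}), but on the \emph{first-order reduction} of \eqref{eqn1.0} supplied by the Hamiltonian structure of Proposition~\ref{prop-hamiltonian}, rather than on the second-order ODE directly. First note that \emph{existence} is essentially already available: setting $C_{\Delta,\lambda}\mean C_o=C_e$ --- the common value guaranteed by Theorem~\ref{theorem:MainResult}, whose proof is carried out for every $\lambda>0$ --- and letting $\Theta_{\Delta,\lambda}$ be the corresponding common value of $\theta_e=\theta_o$ in \eqref{thetaoe}, the computation in Section~\ref{subs-conjecture} (together with Proposition~\ref{prop-ode2} and Corollary~\ref{cor:Limits}, which give that $F_{\Delta,\lambda}=F_o=F_e$ is twice continuously differentiable on $(0,1)$) shows that $F_{\Delta,\lambda}$ solves \eqref{eqn1.0}--\eqref{bc.1} with $C=C_{\Delta,\lambda}$. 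So the real content is the two uniqueness assertions, and the aim is to reduce them to a single scalar shooting equation.

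Second, I would set up the shooting problem. For $\Delta=1$ equation \eqref{eqn1.0} is linear, $\ddot F-(\ln\lambda)\dot F+C^{2}\lambda F=0$, and the statement follows from its explicit damped-oscillatory solution with $F(0)=0$, $\dot F(0)=C$; so assume $\Delta\ge 2$. Fix $C>0$, let $F_C$ solve \eqref{eqn1.0} with $F_C(0)=0$, $\dot F_C(0+)=C$ (a unique solution exists on the interval where $\dot F_C>0$, which contains a right neighbourhood of $0$), and set $\sigma_C\mean\inf\{z>0:\dot F_C(z)=0\}$. Applying the change of variables $(g_1,g_2)=(F_C,\dot F_C)\mapsto(h_1,h_2)$ of \eqref{eq:h1}--\eqref{eq:h2}, with $\theta_e$ replaced by the value $\Theta(C)\mean(\lambda C^{(\Delta+1)/\Delta})^{\Delta/(\Delta^2-1)}=\lambda^{\Delta/(\Delta^2-1)}C^{1/(\Delta-1)}$ forced by the derivation in Proposition~\ref{prop-hamiltonian} when $C_o=C_e=C$, one finds that $h_C=(h_{1,C},h_{2,C})$ solves the \emph{autonomous} system $\dot h=H(h)$ of \eqref{eq:H1}--\eqref{eq:H3} with $h_{1,C}(0)=0$, $h_{2,C}(0)=a(C)\mean(\Theta(C)\,C)^{1/\Delta}=\lambda^{1/(\Delta^2-1)}C^{1/(\Delta-1)}$, and stays on the level curve $\{\Phi=a(C)^{\Delta+1}\}$ of $\Phi(y_1,y_2)=y_1^{\Delta+1}+y_2^{\Delta+1}-(\ln\lambda)y_1y_2$. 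The elementary identity $a(C)=\lambda^{-1/(\Delta+1)}\Theta(C)$, combined with conservation of $\Phi$, shows that at the first return of $h_{2,C}$ to $0$ one has $h_{1,C}=a(C)$, hence $F_C=\lambda^{(\sigma_C-1)/(\Delta+1)}$ there; consequently the two conditions $\dot F_C(1)=0$ and $\inf\{t>0:F_C(t)=1\}=1$ in \eqref{bc.1} are \emph{jointly} equivalent to the single equation $\sigma_C=1$. Since $C\mapsto a(C)$ is a strictly increasing homeomorphism of $(0,\infty)$, everything reduces to the \emph{time map} $T(a)$, the time the flow of $H$ from $(0,a)$ takes to reach $\{h_2=0,\ h_1>0\}$, via $\sigma_C=T(a(C))$.

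Third, I would conclude from the properties of $T$. If $a\mapsto T(a)$ is a strictly decreasing continuous bijection of $(0,\infty)$ onto itself, then $C^{*}\mean a^{-1}(T^{-1}(1))$ is the unique $C$ for which $F_C$ satisfies \eqref{bc.1}, necessarily equal to $C_{\Delta,\lambda}$; uniqueness of the solution for this $C$ is then immediate, since the orbit of the smooth polynomial vector field $H$ through a prescribed point is unique (Picard--Lindel\"of), the substitution \eqref{eq:h1}--\eqref{eq:h2} is a diffeomorphism on $\{g_2>0\}$, and beyond $z=1$ the solution is forced to be constant equal to $1$ (equation \eqref{eqn1.0} does not permit $F$ to decrease). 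When $\lambda=1$ the required properties of $T$ come for free from the scaling symmetry $h(\cdot)\mapsto(\kappa\,h_1(\kappa^{\Delta-1}\cdot),\,\kappa\,h_2(\kappa^{\Delta-1}\cdot))$ of $\dot h=H(h)$, which gives $T(\kappa a)=\kappa^{-(\Delta-1)}T(a)$; one then recovers precisely the constant $C_{\Delta,1}$ of \eqref{ODE}--\eqref{bc}, consistently with Proposition~\ref{prop:Funique}. For general $\lambda$ the limits $T(a)\to\infty$ as $a\downarrow 0$ and $T(a)\to 0$ as $a\uparrow\infty$ should follow from the same scaling identity (now relating the $\lambda$-system to the $\lambda^{\kappa^{\Delta-1}}$-system) plus crude a priori bounds showing that the cross term $(\ln\lambda)h_1h_2$ is of lower order on orbits of large, resp.\ small, energy.

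The main obstacle is \emph{strict monotonicity} of $T$ for $\lambda\neq 1$: the cross term breaks the exact scaling, so $T(a)$ becomes a genuine period-type function of the planar Hamiltonian $\Phi$. I would attack this by writing $T(a)=\int dh_1/\big(-\tfrac{\ln\lambda}{\Delta+1}h_1+h_2^{\Delta}\big)$ along the first-quadrant arc of $\{\Phi=a^{\Delta+1}\}$, differentiating in $a$, and using the algebraic form of $\Phi$ (quasi-homogeneity of its leading part together with a convexity / Chebyshev-system property of the level curves in the relevant quadrant) to pin down the sign of the derivative, in the spirit of classical criteria for monotone period functions. Should this stall, a fallback is to run the parameterized-ODE sensitivity analysis of Proposition~\ref{prop:Funique} directly on the non-autonomous second-order system for $(F_C,\dot F_C)$ and track $\partial\sigma_C/\partial C$; this avoids the phase portrait but is more delicate because of the degeneracy of \eqref{eqn1.0} at $\dot F=0$. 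A secondary, and I expect soft, point is verifying the global phase portrait of $H$ --- that the orbit from $(0,a)$ really reaches $\{h_2=0,\ h_1>0\}$ in finite time, without escaping to infinity or returning to the saddle at the origin first --- which should follow from coercivity of $\Phi$ and the explicit linearization at $0$.
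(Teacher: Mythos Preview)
The paper does \emph{not} prove this statement: Conjecture~\ref{conjecture:DiffEq} is explicitly left open (``we defer the validation of such a conjecture to future work''). So there is no paper proof to compare against. What the paper does establish, in the discussion preceding the conjecture, is exactly the existence direction you sketch: $C_o=C_e$ for all $\lambda>0$ by Theorem~\ref{theorem:MainResult}, and then Proposition~\ref{prop-ode2} together with Corollary~\ref{cor:Limits} and \eqref{genbc-2}--\eqref{genbc-3} show that $F_{\Delta,\lambda}$ satisfies \eqref{eqn1.0}--\eqref{bc.1} with $C=C_{\Delta,\lambda}\mean C_o=C_e$. Your first paragraph is thus a correct summary of what is already in the paper, not a new contribution.

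Your reduction to the Hamiltonian time map is sound as a strategy, and the algebraic identity $a(C)=\lambda^{-1/(\Delta+1)}\Theta(C)$ you use to collapse the two terminal conditions in \eqref{bc.1} into the single equation $\sigma_C=1$ checks out. For $\lambda=1$ your scaling argument recovers the content of Proposition~\ref{prop:Funique} by a different (and arguably cleaner) route. However, your proposal does not constitute a proof of the conjecture: the decisive step --- strict monotonicity of $a\mapsto T(a)$ for $\lambda\neq 1$ --- is precisely what you label ``the main obstacle'', and you only list possible avenues (period-function criteria for planar Hamiltonians, or a direct sensitivity analysis of $\partial\sigma_C/\partial C$) without carrying any of them through. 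The cross term $(\ln\lambda)y_1y_2$ in $\Phi$ genuinely breaks the homogeneity that makes the $\lambda=1$ case trivial, and neither of your suggested approaches is known to work off the shelf for this particular $\Phi$. A second, smaller gap is the global phase portrait claim (that the orbit from $(0,a)$ reaches $\{h_2=0,h_1>0\}$ in finite time for every $a>0$), which you flag but do not verify; for $\lambda\neq 1$ the fixed point at the origin is a saddle with eigenvalues $\pm\frac{\ln\lambda}{\Delta+1}$, and ruling out asymptotic approach to it along the stable manifold requires an argument. In short: your outline is a plausible program, consistent with the paper's own framing of the problem as open, but the core analytic difficulty remains unresolved.
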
 

\section{Graphs with Large Girth}\label{section:RegularGraphs}

We now switch our focus to the problem of computing the volume of the
LP polytope ${\mathcal P} (\G)$ 
of a  $\Delta$-regular graph $\G$ with large
girth, and specifically prove  Theorem~\ref{theorem:MainResultRegularGraph} in Section
\ref{subs-pf-Regulargraph}.   
The proof approach we use follows closely the technique 
used in~\cite{BandyopadhyayGamarnikCounting} for the problem of
counting the asymptotic number of independent sets in regular graphs
with large girth. 
First, in Section \ref{subsection:rewiring} we discuss a certain \emph{rewiring} technique that allows
one to construct  $(N-2)$-node regular graph with large girth from
an $N$-node regular graph with large girth by deleting and adding
only a constant number of 
(specific) nodes and edges. 

\subsection{Rewiring}\label{subsection:rewiring}

Here, we summarize relevant results from
~{\cite[Section 4.3]{BandyopadhyayGamarnikCounting}}. 
Given an $N$-node $\Delta$-regular graph $\G$, fix any two nodes
$u_1, u_2$  such that the graph theoretic distance between $u_1$ and $u_2$ is at least four.
The latter ensures that there are no edges between the 
non-overlapping neighbor sets of $u_1$ and $u_2$, which we denote by $u_{1,1},\ldots,u_{1,\Delta}$
and $u_{2,1},\ldots,u_{2,\Delta}$, respectively. 
Consider a modified graph $\Hg$  obtained  from $\G$ by deleting the
nodes $u_1$ and $u_2$, 
and adding an edge between $u_{1,i}$ and $u_{2,i}$ for every 
$i=1,\ldots,\Delta$; see Figure~\ref{figure:rewire}.  
The resulting graph $\Hg$ is a $\Delta$-regular graph with $N-2$
nodes.   
We call this operation  a ``rewiring'' or ``rewire'' operation. In our application, the rewiring step will be applied only to 
pairs of nodes with distance at least four.
Rewiring was used in \cite{MezardParisiCavity} and \cite{MezardIndSets2004}
in the context of random regular graphs, and it was performed on two nodes selected
randomly from the graph. Here, as in~\cite{BandyopadhyayGamarnikCounting}, we will instead rewire on nodes $u_1$ and $u_2$
that are farthest from each other.  As shown in the next result, this will enable us to
preserve the large girth property of the graph for many rewiring steps.

\begin{figure}\label{figure:rewire}
\begin{center}
\scalebox{.20}{
\includegraphics{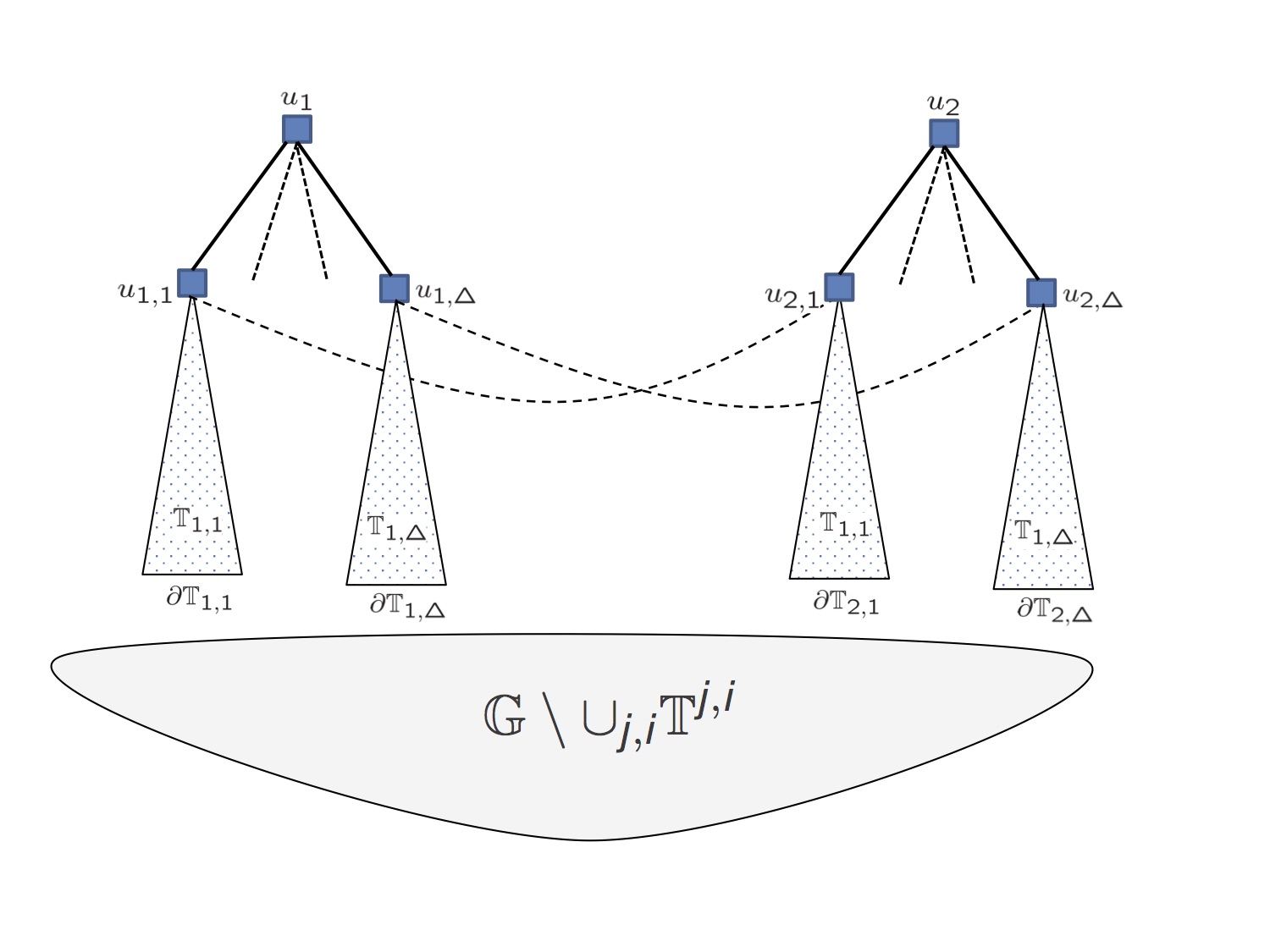}
}
\caption{Rewiring on nodes $u_1$ and $u_2$}
\end{center}
\end{figure}

Recall that $g(\G)$ denotes the girth of the graph $\G$.
We now state  Lemma 2 of
  ~\cite{BandyopadhyayGamarnikCounting}. 
 For completeness we include the proof of this lemma in Appendix \ref{sec-ap1}.

\begin{lemma}\label{lemma:Rewire}
Given an arbitrary $N$-node $\Delta$-regular graph $\G$, consider any
integer $4\leq g\leq g(\G)$. 
 If $2(2g+1) \Delta^{2g} < N$, then 
the rewiring operation can be
performed for at least $(N/2)-(2g+1)\Delta^{2g}$ steps on pairs of
nodes that  are  a distance at least $2g+1$ apart. 
After every rewiring step, the resulting graph is $\Delta$-regular with girth at least $g$.
\end{lemma}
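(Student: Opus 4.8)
The plan is to prove Lemma~\ref{lemma:Rewire} by induction on the number of rewiring steps performed, showing that at each stage we can still find a pair of nodes that is far apart, and that rewiring such a pair does not create short cycles. First I would fix the target girth $g$ with $4 \le g \le g(\G)$ and set up the induction: suppose we have carried out $k$ rewiring steps so far, producing a $\Delta$-regular graph $\G_k$ on $N-2k$ nodes with girth at least $g$, and suppose $N - 2k > 2(2g+1)\Delta^{2g}$ (this will be guaranteed as long as $k \le (N/2) - (2g+1)\Delta^{2g}$, which is exactly the range claimed). I would then argue that $\G_k$ contains two nodes $u_1, u_2$ at graph-theoretic distance at least $2g+1$: indeed, the ball of radius $2g$ around any vertex in a $\Delta$-regular graph contains at most $1 + (\Delta+1)\sum_{j=0}^{2g-1}\Delta^j \le (2g+1)\Delta^{2g}$ vertices (a crude bound suffices), so if the number of vertices exceeds this count, not every vertex lies within distance $2g$ of a fixed vertex, yielding the desired pair. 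Here I would invoke the hypothesis $N - 2k > 2(2g+1)\Delta^{2g}$ to produce such a pair even with room to spare for the deleted nodes.

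Next I would verify that the rewiring operation on this distant pair $(u_1, u_2)$ preserves both $\Delta$-regularity and the lower bound $g$ on girth. Regularity is immediate: deleting $u_1$ drops the degree of each neighbor $u_{1,i}$ by one, and adding the edge $(u_{1,i}, u_{2,i})$ restores it (and symmetrically for $u_2$); all other degrees are untouched, and the new edges are well-defined and distinct because the distance between $u_1$ and $u_2$ being at least four ensures the neighbor sets $\{u_{1,i}\}$ and $\{u_{2,i}\}$ are disjoint and span no pre-existing edge. For the girth, the key observation is that any cycle in the new graph $\G_{k+1}$ either avoids all the new edges — in which case it is already a cycle in $\G_k \setminus \{u_1, u_2\}$, hence has length at least $g$ — or it uses at least one new edge $(u_{1,i}, u_{2,i})$. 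In the latter case, replacing each new edge $(u_{1,i}, u_{2,i})$ traversed by a path of length two through $u_1$ (resp. $u_2$) gives a closed walk in $\G_k$; more carefully, any path in $\G_{k+1}$ between $u_{1,i}$ and $u_{2,j}$ not using new edges must have length at least $\mathrm{dist}_{\G}(u_{1,i}, u_{2,j}) \ge (2g+1) - 2 = 2g - 1$ because deleting vertices only increases distances, so any cycle through a new edge has length at least $1 + (2g-1) = 2g \ge g$. I would phrase this last step as: a shortest cycle using $t \ge 1$ new edges must contain $t$ vertex-disjoint segments each of length at least $2g-1$ joining the rewired endpoints, so its total length is at least $t \cdot 2g \ge 2g > g$.

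The main obstacle I anticipate is the bookkeeping in the girth argument: one must be careful that when a cycle uses a new edge, the remainder of the cycle genuinely corresponds to a long path in the \emph{old} graph, and that the distance estimate $\mathrm{dist}_{\G_k}(u_{1,i}, u_{2,j}) \ge 2g - 1$ is valid even after $k$ prior rewirings. This requires noting that each rewiring only deletes vertices and adds edges between far-apart neighbor sets, and that we always rewire on pairs at distance at least $2g+1$ in the \emph{current} graph, so the relevant neighbor-to-neighbor distances in the current graph are at least $2g - 1$; combined with the fact that subsequent rewirings cannot decrease girth by the same induction hypothesis, the bound propagates. Since this is exactly Lemma~2 of~\cite{BandyopadhyayGamarnikCounting}, I would lean on that reference for the detailed verification and present the argument above as a self-contained sketch, deferring the full combinatorial check to Appendix~\ref{sec-ap1}. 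Finally, counting the steps: starting from $N$ vertices and removing $2$ per step, we can continue as long as $N - 2k > 2(2g+1)\Delta^{2g}$, i.e., for $k$ up to roughly $(N - 2(2g+1)\Delta^{2g})/2 = (N/2) - (2g+1)\Delta^{2g}$ steps, which is the claimed count.
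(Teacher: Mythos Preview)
Your overall scaffolding---induction on the number of rewiring steps, ball-counting to guarantee two vertices at distance at least $2g+1$, and then checking regularity and girth---matches the paper exactly. The gap is in the girth step.

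You claim that a shortest cycle in $\G_{k+1}$ using $t\ge 1$ new edges decomposes into $t$ old-edge segments \emph{each} of length at least $2g-1$, because each such segment joins some $u_{1,i}$ to some $u_{2,j}$. This is not true. When $t\ge 2$, consecutive new edges on the cycle can be oriented so that the old segment between them joins two neighbors of the \emph{same} deleted vertex, say $u_{2,i}$ to $u_{2,j}$. For such a segment there is no cross-side distance bound; the only control comes from the girth of $\G_k$: a path from $u_{2,i}$ to $u_{2,j}$ avoiding $u_2$ closes up with $u_{2,i}-u_2-u_{2,j}$ to a cycle in $\G_k$, so that segment has length at least $g-2$, not $2g-1$. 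Your sentence ``replacing each new edge $(u_{1,i},u_{2,i})$ by a path of length two through $u_1$ (resp.\ $u_2$)'' also does not make sense as written, since $u_{1,i}$ and $u_{2,i}$ have no common neighbor.

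The paper's proof avoids this by a short pigeonhole argument: if a cycle of length at most $g-1$ uses $k\ge 2$ new edges, one old segment has length at most $(g-1)/k\le (g-1)/2$, and then one checks separately whether its endpoints are same-side (contradicting $g(\G_k)\ge g$, since $g>3$) or cross-side (contradicting $\mathrm{dist}(u_1,u_2)\ge 2g+1$). Your argument is easily repaired along these lines, but as stated the key inequality ``each segment has length at least $2g-1$'' is false.
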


\begin{remark}
\label{rem-rewiring}
{\em 
If the same fixed $g \in \{4, \ldots, g(\G)\}$ is used at each step, 
since every rewiring step reduces the graph by $2$ nodes, we see that after $(N/2)-(2g+1)\Delta^{2g}=N/2-O(1)$ rewiring steps,
the resulting graph is of constant $O(1)$ size, which will have a negligible
contribution to the asymptotic formula for the volume of
$\mathcal{P}(\G)$.}
\end{remark}

\subsection{Proof of Theorem~\ref{theorem:MainResultRegularGraph}}
\label{subs-pf-Regulargraph}

Fix $\lambda  > 0$, $\Delta \geq 1$ and let $F_{\Delta, \lambda}$ be
the distribution function from Theorem \ref{theorem:MainResult}. 
We fix an arbitrary sequence $\G_n, n \in \mathbb{N},$ of $\Delta$-regular graphs with diverging girth: $\lim_{n\rightarrow\infty}g(\G_n)=\infty$.
In what follows,  we adopt the short-hand notation
$x\lessgtr(1\pm \epsilon)y$ to mean  $(1-\epsilon)y\le x\le
(1+\epsilon)y$. 
The  main technical result underlying our proof of Theorem~\ref{theorem:MainResultRegularGraph} is as follows.
\begin{theorem}\label{theorem:rewire}
For every $\Delta \geq 2$, $\epsilon>0$ and $\lambda > 0$, there exists a large enough
integer $g=g(\epsilon,\Delta,\lambda)$  such that if the rewiring is performed
on any $\Delta$-regular graph $\G$ with girth $g(\G)\ge g$ on two
nodes that are at least $2g+1$ distance apart, then for the resulting
graph $\Hg,$ we have 
\begin{align}
{Z_{\G, \lambda} \over Z_{\Hg, \lambda}}&\lessgtr (1\pm \epsilon)
\left(\int_0^1 \lambda^t F^\Delta_{\Delta-1,\lambda}(1-t)dt\right)^{-2}\notag\\
&\quad \times \left(\int_0^1\dot F_{\Delta-1,\lambda}(t)F_{\Delta-1,\lambda}(1-t)dt\right)^{-\Delta}. \label{eq:RHSLimit}
\end{align}
\end{theorem}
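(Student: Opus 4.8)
The idea is to track precisely how the partition function $Z_{\G,\lambda}$ changes under a single rewiring step. Recall that rewiring deletes two far-apart nodes $u_1, u_2$ (each with neighbor sets $u_{1,1},\ldots,u_{1,\Delta}$ and $u_{2,1},\ldots,u_{2,\Delta}$) and joins $u_{1,i}$ to $u_{2,i}$ for each $i$. The plan is to express both $Z_{\G,\lambda}$ and $Z_{\Hg,\lambda}$ as integrals over the spin values on the "frozen" set $W \mean \{u_{1,i}, u_{2,i} : 1 \le i \le \Delta\}$, integrating out everything else. First I would write $Z_{\G,\lambda} = \int Z^{(1)}(\mbx_{W_1}) Z^{(2)}(\mbx_{W_2}) \prod_i \big(\int_0^{1-\max_j x_{1,j}'} \cdots\big)$ — more precisely, conditioning on the spin values $\mbx_{W}$ at the vertices in $W$, the graph decomposes: the contribution of $u_1$ is $\int_{[0,1]} \lambda^{x_{u_1}} \prod_{i=1}^\Delta \ind_{\{x_{u_1} + x_{1,i} \le 1\}}\, dx_{u_1}$, similarly for $u_2$, and the rest of the graph (call its contribution $Q(\mbx_W)$) is unchanged by the rewiring. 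In $\Hg$, the same $Q(\mbx_W)$ appears, but the $u_1, u_2$ factors are replaced by the edge indicators $\prod_i \ind_{\{x_{1,i} + x_{2,i} \le 1\}}$.

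The key point is that when $g(\G)$ is large, the conditional law of $\mbx_W$ under $Q(\mbx_W)$ (suitably normalized) nearly factorizes into $2\Delta$ independent copies of the root marginal of $\P_{\T_{n,\Delta-1},\meas_\lambda}$ with $n$ large — because each $u_{j,i}$, with $u_j$ deleted, is the root of a subtree that looks like $\T_{m,\Delta-1}$ out to radius $\sim g$, and these subtrees are vertex-disjoint and far apart for distinct $(j,i)$ when $u_1,u_2$ are $\ge 2g+1$ apart. So I would invoke Theorem~\ref{theorem:MainResult} (correlation decay) and Corollary~\ref{cor:Limits} to say that, up to a multiplicative $(1\pm\epsilon)$ error achievable by choosing $g$ large, the $W$-marginal density is within $(1\pm\epsilon)$ of $\prod_{j=1}^2\prod_{i=1}^\Delta f_{\Delta-1,\lambda}(x_{j,i})$, where $f_{\Delta-1,\lambda} = \dot F_{\Delta-1,\lambda}$. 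Then the ratio $Z_{\G,\lambda}/Z_{\Hg,\lambda}$ becomes, up to $(1\pm O(\epsilon))$,
\[
\frac{\displaystyle\int_{[0,1]^{2\Delta}} \Big(\prod_{j,i} f_{\Delta-1,\lambda}(x_{j,i})\Big)\,\Big(\int_0^1 \lambda^{y_1}\prod_i\ind_{\{y_1+x_{1,i}\le1\}}dy_1\Big)\Big(\int_0^1 \lambda^{y_2}\prod_i\ind_{\{y_2+x_{2,i}\le1\}}dy_2\Big)\,d\mbx}{\displaystyle\int_{[0,1]^{2\Delta}}\Big(\prod_{j,i}f_{\Delta-1,\lambda}(x_{j,i})\Big)\prod_i\ind_{\{x_{1,i}+x_{2,i}\le1\}}\,d\mbx}.
\]
The numerator factors as a product over $j=1,2$ of $\int_{[0,1]^{\Delta}}\prod_i f_{\Delta-1,\lambda}(x_{j,i})\int_0^1\lambda^{y}\prod_i\ind_{\{y+x_{j,i}\le1\}}dy\,d\mbx_{W_j}$; swapping the order of integration and using $\int_{[0,1]^\Delta}\prod_i f_{\Delta-1,\lambda}(x_i)\ind_{\{x_i\le 1-y\}}d\mbx = F_{\Delta-1,\lambda}^\Delta(1-y)$ gives exactly $\int_0^1\lambda^y F_{\Delta-1,\lambda}^\Delta(1-y)\,dy$, squared. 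For the denominator, integrate out $x_{1,i}$ for each $i$: $\int_0^1 f_{\Delta-1,\lambda}(x_{1,i})\ind_{\{x_{1,i}\le 1-x_{2,i}\}}dx_{1,i} = F_{\Delta-1,\lambda}(1-x_{2,i})$, and then integrating the remaining $f_{\Delta-1,\lambda}(x_{2,i})F_{\Delta-1,\lambda}(1-x_{2,i})$ over $x_{2,i}\in[0,1]$, independently over $i$, yields $\big(\int_0^1 \dot F_{\Delta-1,\lambda}(t)F_{\Delta-1,\lambda}(1-t)\,dt\big)^{\Delta}$. This matches the claimed right-hand side \eqref{eq:RHSLimit}.

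The main obstacle — and where the real work lies — is making rigorous the approximate-independence claim for the $W$-marginal: quantifying the error in replacing $Q(\mbx_W)/(\text{const})$ by $\prod f_{\Delta-1,\lambda}(x_{j,i})$ uniformly in $\mbx_W \in [0,1]^{2\Delta}$. This requires a couple of things beyond a bald citation of Theorem~\ref{theorem:MainResult}: (i) a finite-girth version of the decay-of-correlations statement (the tree approximation is valid only out to radius $\sim g$, so one needs to control the influence of the "boundary" where the graph departs from a tree, which is handled by the uniformity over boundary conditions in \eqref{eq:uniqueness}–\eqref{eq:uniqueness2}), and (ii) pushing the uniform convergence of the conditional \emph{densities} $\dot F_{n,\Delta-1,\lambda}(\cdot\mid\mbx_{\partial})$ on a compact subinterval $[\delta,1-\delta]$, combined with a crude tail bound near the endpoints $0$ and $1$ to handle the non-uniformity of density convergence on all of $[0,1]$ (the densities can blow up near $1$). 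One then has to check the various integrals above are continuous in the densities in an appropriate ($L^1$, with $(1\pm\epsilon)$ multiplicative control) sense, so that an $\epsilon$-perturbation of the density produces an $O(\epsilon)$-perturbation of the ratio. I would isolate this as a lemma: "for every $\epsilon'>0$ there is $g$ such that, on a $\Delta$-regular graph of girth $\ge g$, the joint density of the spin values on any set of $2\Delta$ vertices that are pairwise $\ge 2g+1$ apart and each at tree-distance $\ge g$ from the rest, is within $(1\pm\epsilon')$ (multiplicatively, off a set of small measure) of $\prod f_{\Delta-1,\lambda}$" — and then the computation above is essentially bookkeeping.
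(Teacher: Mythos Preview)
Your computation is correct and lands on the right formula, but the paper avoids precisely the ``main obstacle'' you flag by conditioning at a different layer. Instead of conditioning on the spin values at $W=\{u_{j,i}:1\le i\le\Delta,\,j=1,2\}$ and then having to prove that the $W$-marginal density approximately factorizes as $\prod f_{\Delta-1,\lambda}$, the paper conditions on the \emph{far boundaries} $\partial\T^{j,i}$ of the depth-$(g-1)$ subtrees of $\G\setminus\{u_1,u_2\}$ rooted at each $u_{j,i}$. Given those boundary values, the spatial Markov property makes the $X_{u_{j,i}}$ \emph{exactly} conditionally independent, with conditional CDFs that are \emph{exactly} the tree quantities $F_{g-1,\Delta-1}(\cdot\mid\mbX_{\partial\T^{j,i}})$. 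Concretely, the paper splits the ratio through $Z_{\G\setminus\{u_1,u_2\}}$ and expresses each of $Z_{\G\setminus\{u_1,u_2\}}/Z_{\G}$ and $Z_{\Hg}/Z_{\G\setminus\{u_1,u_2\}}$ as an expectation (over the random far-boundary $\mbX_{\cup_{j,i}\partial\T^{j,i}}$) of a functional of these conditional tree CDFs (Lemmas~\ref{lem-ratio1} and~\ref{lem-ratio2}); the uniform-in-boundary-condition convergence \eqref{eq:uniqueness} of Theorem~\ref{theorem:MainResult} then applies directly inside each expectation.

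This buys exactly what you were worried about: no approximate-independence lemma, no multiplicative control of joint densities, no endpoint tail bounds. Only uniform CDF convergence (and, for the $Z_{\Hg}$ piece, the induced weak convergence of $dF_{g-1,\Delta-1}(\cdot\mid\mbx)$ against a uniformly convergent sequence of bounded continuous integrands) is used. Your route would work too, but to prove your factorization lemma you would in effect re-derive this exact conditional independence by conditioning on $\cup\partial\T^{j,i}$; the paper simply stops there and never passes through the $W$-density. Incidentally, your worry about density blowup near $1$ is unfounded here: by \eqref{genbc-2} the limit density satisfies $\dot F_{\Delta-1,\lambda}(1-)=0$, and the finite-$n$ conditional densities are uniformly bounded, so the endpoint is benign --- but the paper's approach never needs to confront this anyway.
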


We first show how this result implies
Theorem~\ref{theorem:MainResultRegularGraph}.

\begin{proof}[Proof of Theorem~\ref{theorem:MainResultRegularGraph}]
We fix $\epsilon > 0$ and $g=g(\epsilon,\Delta,\lambda) \geq 4$,     
as described in Theorem~\ref{theorem:rewire}. Since 
$g(\G_n)\rightarrow\infty$, 
we have $g(\G_n)\ge g$ for all sufficiently large $n$. 
For $t = 1, \ldots,  \TT_n \mean|V(\G_n)|/2-(2g+1)\Delta^{2g}$,    
let $\G_{n,t}$  be the graph obtained from $\G_{n,0}\mean \G_n$ after $t$
rewiring steps.  Then, for any $\lambda > 0$, trivially we have 
\begin{align*}
Z_{\G_n,\lambda}=\left(\prod_{1\le t\le \TT_n}{Z_{\G_{n,t-1},\lambda}\over Z_{\G_{n,t},\lambda}}\right)Z_{\G_{n,{\TT_n},\lambda}}.
\end{align*}
For conciseness, we introduce the notation  
\begin{align*}
\Gamma(\Delta,\lambda)\mean \left(\int_0^1 \lambda^t F^\Delta_{\Delta-1,\lambda}(1-t)dt\right)^{-2}
\left(\int_0^1 \dot
  F_{\Delta-1,\lambda}(t)F_{\Delta-1,\lambda}(1-t)dt\right)^{-\Delta}, 
\end{align*}
and note that  by Lemma~\ref{lemma:Rewire} and
Theorem~\ref{theorem:rewire}, 
for $1\le t\le \TT_n,$ 
\begin{align*}
{Z_{\G_{n,t-1},\lambda}\over Z_{\G_{n,t},\lambda}}&\lessgtr (1\pm \epsilon)\Gamma(\Delta,\lambda).
\end{align*}
Therefore, we obtain
\begin{align*}
Z_{\G_n,\lambda}&\lessgtr (1\pm \epsilon)^{\TT_n}\Gamma^{\TT_n}(\Delta,\lambda)Z_{\G_{n,\TT_n},\lambda}.
\end{align*}
Now, recall from Remark \ref{rem-rewiring} that the number of nodes, 
and hence  edges, of $\G_{n,\TT_n}$ is bounded 
by a constant that  does not depend on $n$.  In turn, this implies
that $Z_{\G_{n,\TT_n},\lambda}$ is also bounded by a constant that does not
depend on $n$. 
Therefore, taking the natural  logarithm  of both sides of the last display, dividing by
$|V(\G_n)|$,  recalling that $\TT_n=|V(\G_n)|/2-O(1)$, and taking limits, first as $n\rightarrow\infty$ and then
as $\epsilon\rightarrow 0$, we obtain \eqref{eq:VolumeLimit}. 
\end{proof}

\vspace{0.1in}

The remainder of this section is devoted to proving Theorem~\ref{theorem:rewire}.
Fix an integer $g$, and consider an arbitrary $\Delta$-regular graph $\G$ with girth $g(\G)\ge 2g+1$ and
fix any two nodes $u_1$ and $u_2$ in $\G$ that are at least a distance
$2g+1$ apart.  Fix $\lambda > 0$, and 
let $\P = \P_{\mu_\lambda, \G}$ be the continuous hardcore
measure on $\Pol (\G)$, and for any induced subgraph $\tilde{\G}$, let 
$\P_{\tilde{\G}}$ represent the continuous hardcore measure on
$\tilde{\G}$, and let $\E$ and $\E_{\tilde{\G}}$ represent the corresponding
expectations. Also,  as usual, let $\mbX$ be the random vector
representing spin values at nodes. 
  Moreover, let $\Hg$ be the graph
obtained on rewiring on $u_1$ and $u_2$  and 
omitting the dependence on $\lambda$ for conciseness, 
let $Z_{\G}$, $,Z_{\G\setminus \{u_1, u_2\}}$ and $Z_{\Hg}$, respectively, be the partition functions
associated with the continuous hardcore model (with parameter
$\lambda$) on $\G, \G\setminus\{u_1, u_2\}$ and $\Hg$, respectively.  
Next,  for $j=1, 2$, we  denote by $u_{j,1},\ldots,u_{j,\Delta}$
the neighbors of $u_j$ in $\G$, and  let $\T^{j}$
be the subtree of depth $g$ rooted at $u_{j}$, and for $j=1, 2$, let 
$\T^{j,i}$ denote the subtree of $\G \setminus \{u_1, u_2\}$ rooted at
$u_{1,i}$.  Note that  (since $u_j$ has been removed), $u_{j,i}$ has 
$\Delta-1$ children, as do each of the other internal nodes of
$\T^{j,i}$. Thus, 
 for every $i,j$, $\T^{j,i}$ is isomorphic to
$\T_{g-1,\Delta-1}$, denoted $\T^{j,i} \sim \T_{g-1,\Delta-1}$.  Finally,
recall the definition of $F_{n,\Delta} = F_{n,\Delta,\lambda}$ given
in Section \ref{subs-densityevol}.  
The proof of Theorem \ref{theorem:rewire} relies on 
 two preliminary estimates, stated in Lemmas \ref{lem-ratio1} and
 \ref{lem-ratio2} below. 
We first show that these estimates imply  Theorem
\ref{theorem:rewire}, and only then  prove the estimates.

\begin{lemma}
\label{lem-ratio1}
\begin{align}
\label{eq-ratio1}
  \frac{Z_{\G\setminus \{u_1, u_2\}}}{Z_{\G}} &=  \E \left[ 
\prod_{j=1}^2 \left( \int_{t \in [0,1]} \lambda^t
  \prod_{i=1}^\Delta F_{g-1,  \Delta-1}
  (1-t|\mbX_{\partial \T^{j,i}}) dt \right)^{-1} \right]. 
\end{align}
\end{lemma}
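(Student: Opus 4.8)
The plan is to compute the ratio $Z_{\G \setminus \{u_1,u_2\}}/Z_{\G}$ by conditioning on the spin values on the ``boundary'' separating $u_1, u_2$ and their subtrees from the rest of the graph. Since $u_1$ and $u_2$ are at distance at least $2g+1$, the subtrees $\T^1$ and $\T^2$ of depth $g$ rooted at $u_1$ and $u_2$ are vertex-disjoint, and removing the leaf boundaries $\partial \T^1 \cup \partial \T^2$ together with the edges joining them to $\G$ disconnects $\T^1$ and $\T^2$ from the remaining graph. First I would write $Z_{\G} = \prodmu(\Pol(\G))$ and integrate out, in order, the spin values $x_{u_1}, x_{u_2}$ and then the spin values on the internal nodes of $\T^1, \T^2$, holding fixed the configuration $\mbx$ on $\G \setminus (\T^1 \cup \T^2)$ (equivalently, on everything from $\partial \T^1, \partial \T^2$ outward). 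This factorizes the integral over $\Pol(\G)$ into (i) the measure of the feasible configurations on $V(\G) \setminus (V(\T^1) \cup V(\T^2) \setminus (\partial\T^1\cup\partial\T^2))$ — wait, more carefully: into the contribution of the ``outside'' configuration and (ii) for each $j=1,2$, the integral over the spin value $x_{u_j}$ and the spins inside $\T^{j,1},\dots,\T^{j,\Delta}$, subject to the hardcore constraint linking $u_j$ to its children $u_{j,i}$.

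Next, I would recognize the inner integral over $\T^j$ as follows. Given the boundary condition $\mbx_{\partial \T^{j,i}}$ on the leaves of each $\T^{j,i} \sim \T_{g-1,\Delta-1}$, the quantity $\frac{1}{Z_{g-1,\Delta-1}}\,(\text{measure of feasible spins in } \T^{j,i} \text{ with } x_{u_{j,i}} \le 1-t)$ is exactly $F_{g-1,\Delta-1}(1-t \mid \mbx_{\partial \T^{j,i}})$, the conditional CDF at the root $u_{j,i}$ — this is the direct analogue of the identity used in the proof of Lemma~\ref{lemma:DensityIteration}. Integrating over $x_{u_j} = t \in [0,1]$ against the free spin density $\lambda^t$, the constraint $x_{u_{j,i}} \le 1-t$ for each of the $\Delta$ children contributes the product $\prod_{i=1}^\Delta F_{g-1,\Delta-1}(1-t\mid \mbx_{\partial\T^{j,i}})$, times $Z_{g-1,\Delta-1}^\Delta$. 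So removing $u_1, u_2$ from $\G$ removes exactly these two factors of $\int_0^1 \lambda^t \prod_{i=1}^\Delta F_{g-1,\Delta-1}(1-t\mid \mbx_{\partial\T^{j,i}})\,dt$ and leaves the rest of the partition function unchanged; taking the ratio and recognizing the remaining normalization as the expectation under $\P = \P_{\mu_\lambda,\G}$ over $\mbX_{\partial\T^{1,i}}, \mbX_{\partial\T^{2,i}}$ (since those boundary configurations are distributed according to the marginal of $\P$ there, and all other factors cancel) yields \eqref{eq-ratio1}.

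The one subtlety to get right — and the main thing to check carefully — is the bookkeeping of which $Z_{g-1,\Delta-1}$ normalization constants appear and cancel: one must verify that when passing from the raw product-measure computation to the expectation $\E[\cdots]$ over $\P$, the factors $Z_{g-1,\Delta-1}^{2\Delta}$ and the partition function $Z_{\G \setminus (\T^1\cup\T^2\text{ interiors})}$ combine correctly so that $\E$ is genuinely the continuous hardcore expectation on $\G$ and the integrand inside $\E$ is exactly the reciprocal product of the two $\int_0^1 \lambda^t \prod_i F_{g-1,\Delta-1}(1-t\mid\cdot)\,dt$ terms with coefficient $1$. A clean way to organize this is: write $Z_\G = (\text{sum over outside configs}) \times Z_{g-1,\Delta-1}^{2\Delta} \times \prod_{j=1}^2 \int_0^1 \lambda^t \prod_i F_{g-1,\Delta-1}(1-t\mid\mbx_{\partial\T^{j,i}})\,dt$, and similarly $Z_{\G\setminus\{u_1,u_2\}} = (\text{same outside sum}) \times Z_{g-1,\Delta-1}^{2\Delta}$, because deleting $u_1, u_2$ only removes the vacuous-after-integration root nodes and leaves each $\T^{j,i}$ as a free (unconditioned-at-its-own-root) subtree whose total measure is $Z_{g-1,\Delta-1}$. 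Dividing and observing that $(\text{outside sum} \times Z_{g-1,\Delta-1}^{2\Delta})/Z_\G = \E[\,\prod_j(\int_0^1\lambda^t\prod_i F_{g-1,\Delta-1}(1-t\mid\mbX_{\partial\T^{j,i}})dt)^{-1}\,]$ — since $\P$-probabilities of boundary events are precisely (outside sum $\times Z_{g-1,\Delta-1}^{2\Delta} \times$ integrand)$/Z_\G$ — gives the claim. I expect the disjointness/separation argument (justifying the factorization) to be routine given the distance-$\ge 2g+1$ hypothesis, and the normalization bookkeeping to be the only place where care is needed.
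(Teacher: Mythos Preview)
Your approach is correct in spirit but differs from the paper's, and there is one bookkeeping slip you should fix.

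\textbf{Comparison with the paper.} The paper does \emph{not} factor $Z_\G$ and $Z_{\G\setminus\{u_1,u_2\}}$ directly. Instead it observes that the joint density of $(X_{u_1},X_{u_2})$ at $(0,0)$ under $\P_\G$ equals $Z_{\G\setminus\{u_1,u_2\}}/Z_\G$ (by differentiating $\P(X_{u_1}\le z_1,X_{u_2}\le z_2)$ at $z_1=z_2=0$), then uses the spatial Markov property to write this joint density as $\E\bigl[\prod_{j}(\text{conditional density of }X_{u_j}\text{ at }0\mid \mbX_{\partial\T^j})\bigr]$, and finally computes that conditional density at $0$ to be $\bigl(\int_0^1\lambda^t\prod_iF_{g-1,\Delta-1}(1-t\mid\mbX_{\partial\T^{j,i}})\,dt\bigr)^{-1}$. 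This ``density at zero'' device lets the paper work entirely with probabilities and avoids tracking any interior partition functions. Your direct factorization is more elementary and arguably more transparent, but it forces you to name and carry the subtree normalizations explicitly --- which is exactly where you slip.

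\textbf{The slip.} In your ``clean'' organization you write $Z_{g-1,\Delta-1}^{2\Delta}$ for the contribution of the $2\Delta$ subtrees $\T^{j,i}$. That is not right: given a fixed boundary $\mbx_{\partial\T^{j,i}}$, the total $\mu$-measure of feasible interior configurations of $\T^{j,i}$ is the \emph{conditional} partition function $Z_{\T^{j,i}}(\mbx_{\partial\T^{j,i}})$, not the free $Z_{g-1,\Delta-1}$. Concretely, the correct identities are
\[
Z_\G=\int_{\mbx_B} w(\mbx_B)\,\prod_{j,i}Z_{\T^{j,i}}(\mbx_{\partial\T^{j,i}})\;\prod_{j=1}^2\int_0^1\lambda^t\prod_{i=1}^\Delta F_{g-1,\Delta-1}(1-t\mid\mbx_{\partial\T^{j,i}})\,dt,
\]
\[
Z_{\G\setminus\{u_1,u_2\}}=\int_{\mbx_B} w(\mbx_B)\,\prod_{j,i}Z_{\T^{j,i}}(\mbx_{\partial\T^{j,i}}),
\]
where $B$ is everything outside the interiors of $\T^1,\T^2$. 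Since the $\P_\G$-marginal density of $\mbX_B$ is $Z_\G^{-1}w(\mbx_B)\prod_{j,i}Z_{\T^{j,i}}(\mbx_{\partial\T^{j,i}})\prod_j\bigl(\int_0^1\ldots\bigr)$, multiplying by $\prod_j\bigl(\int_0^1\ldots\bigr)^{-1}$ and integrating gives exactly $Z_{\G\setminus\{u_1,u_2\}}/Z_\G$. So your conclusion is right; the boundary-dependent factors $Z_{\T^{j,i}}(\mbx_{\partial\T^{j,i}})$ cancel just as your constant $Z_{g-1,\Delta-1}^{2\Delta}$ would have, but you should write them correctly.
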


\begin{lemma}
\label{lem-ratio2}
\begin{align}
\notag 
  &\frac{Z_{\Hg}}{Z_{\G\setminus \{u_1, u_2\}}} \\
& = 
\E \left[ \prod_{i=1}^\Delta \int_{t \in [0,1]} dF_{g-1,
    \Delta-1}    (t|\mbX_{\partial \T^{1,i}}) F_{g-1, \Delta-1}
  (1-t|\mbX_{\partial \T^{2,i}})\right]. 
\label{eq-ratio2}
\end{align}
\end{lemma}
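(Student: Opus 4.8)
The plan is to compute $Z_{\Hg}$ directly by integrating out the spin values of $\mbX$ on $\G \setminus \{u_1, u_2\}$, grouping the integration by the two rewired stars, and then recognizing the resulting integrals in terms of the conditional distribution functions $F_{g-1,\Delta-1}(\cdot|\mbx_{\partial \T^{j,i}})$. Recall that $\Hg$ is obtained from $\G \setminus \{u_1,u_2\}$ by adding, for each $i = 1,\dots,\Delta$, an edge between $u_{1,i}$ and $u_{2,i}$, and that these $\Delta$ new edges connect the disjoint subtrees $\T^{1,i}$ and $\T^{2,i}$ (disjointness being guaranteed by the distance-$\geq 2g+1$ hypothesis and $g(\G) \geq 2g+1$, so that within distance $g$ of each $u_{j,i}$ the graph is a tree and these trees are pairwise non-overlapping). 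Writing $\mbz = (z_{1,1},\dots,z_{1,\Delta},z_{2,1},\dots,z_{2,\Delta})$ for the spin values at the $2\Delta$ roots $u_{j,i}$, the only constraints coupling the two halves of the graph are $z_{1,i} + z_{2,i} \leq 1$ for $i = 1,\dots,\Delta$.

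The key computation is then to write
\begin{align*}
Z_{\Hg} = \int \mu_\lambda^{\otimes}\big(d\mbx_{V(\G)\setminus\{u_1,u_2\}}\big)\, \ind_{\Pol(\Hg)}(\mbx)
\end{align*}
and use that $\Pol(\Hg)$ factors as the constraints internal to each $\T^{j,i}$ together with the coupling constraints $z_{1,i}+z_{2,i}\le 1$. Conditioning on the boundary spins $\mbX_{\partial\T^{j,i}}$ (equivalently, integrating the subtree interiors below each root first), the contribution of the subtree $\T^{j,i}$ with root value constrained to an interval is exactly $Z_{\G\setminus\{u_1,u_2\}}$ times the conditional distribution function $F_{g-1,\Delta-1}(\cdot|\mbX_{\partial\T^{j,i}})$ evaluated at the appropriate endpoint --- this is precisely the content of \eqref{eq:FnIteration} and the identity displayed just before it in the proof of Lemma~\ref{lemma:DensityIteration}, applied here with the spin measure $\mu_\lambda$ and tree $\T_{g-1,\Delta-1}$. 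More precisely, for each fixed $i$, integrating over $z_{1,i}$ with the constraint $z_{2,i}\le 1-z_{1,i}$ yields a factor
\begin{align*}
\int_{t\in[0,1]} dF_{g-1,\Delta-1}(t|\mbX_{\partial\T^{1,i}})\, F_{g-1,\Delta-1}(1-t|\mbX_{\partial\T^{2,i}}),
\end{align*}
since $dF_{g-1,\Delta-1}(t|\mbX_{\partial\T^{1,i}})$ encodes the (normalized) weight on the root of $\T^{1,i}$ having value $t$ and $F_{g-1,\Delta-1}(1-t|\mbX_{\partial\T^{2,i}})$ is the (normalized) weight that the root of $\T^{2,i}$ satisfies the coupling constraint. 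The normalization produces a factor $Z_{\G\setminus\{u_1,u_2\}}^{2\Delta / ?}$ that must be tracked carefully; in fact each of the $2\Delta$ subtrees $\T^{j,i}$ contributes one normalizing factor, and since $\G\setminus\{u_1,u_2\}$ is itself the disjoint union of these $2\Delta$ rooted subtrees together with whatever lies at distance $>g$ (which is common to both $\G\setminus\{u_1,u_2\}$ and $\Hg$ and cancels), one gets exactly $Z_{\Hg}/Z_{\G\setminus\{u_1,u_2\}}$ equal to the expectation over $\mbX$ (under $\P_{\G\setminus\{u_1,u_2\}}$, equivalently the product of the tree measures on the boundaries) of $\prod_{i=1}^\Delta \int_{t\in[0,1]} dF_{g-1,\Delta-1}(t|\mbX_{\partial\T^{1,i}})\, F_{g-1,\Delta-1}(1-t|\mbX_{\partial\T^{2,i}})$, which is \eqref{eq-ratio2}.

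The main obstacle I anticipate is bookkeeping the normalization constants and justifying the factorization of $\Pol(\Hg)$: one must verify that, because $u_1$ and $u_2$ have been removed and the new edges only join roots of disjoint subtrees, no constraint in $\Pol(\Hg)$ couples distinct subtrees $\T^{j,i}$ and $\T^{j',i'}$ except through the $\Delta$ paired constraints $z_{1,i}+z_{2,i}\le 1$, and that the part of the graph at distance exceeding $g$ from both stars contributes an identical factor to $Z_{\Hg}$ and $Z_{\G\setminus\{u_1,u_2\}}$ so that it cancels in the ratio. This requires carefully invoking the spatial Markov property of the Gibbs measure (stated in Section~\ref{section:model}) to condition on $\mbX_{\partial\T^{j,i}}$ and to assert that, given these boundary values, the interiors of the subtrees are conditionally independent of each other and of the exterior. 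Once the factorization is set up correctly, the identification of each one-dimensional integral with the stated expression is a direct application of the recursion in Lemma~\ref{lemma:DensityIteration}(2) and the definition of the conditional distribution functions. I would present the argument for a single pair $(i)$ first to fix notation and then take the product over $i = 1,\dots,\Delta$ and the expectation over the boundary configuration.
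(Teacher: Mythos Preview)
Your overall strategy---condition on the leaf boundaries $\partial\T^{j,i}$, use the spatial Markov property, and identify the one-dimensional integrals with the conditional distribution functions $F_{g-1,\Delta-1}$---is exactly right and matches the paper. But there is a genuine gap in how you handle the normalization, and it stems from a structural claim that is false.

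You assert that ``$\G\setminus\{u_1,u_2\}$ is itself the disjoint union of these $2\Delta$ rooted subtrees together with whatever lies at distance $>g$'' and that the distant part ``contributes an identical factor to $Z_{\Hg}$ and $Z_{\G\setminus\{u_1,u_2\}}$ so that it cancels in the ratio.'' This is not correct: the leaves $\partial\T^{j,i}$ are joined by edges to nodes in the exterior, so neither partition function factors as (subtree contribution)$\times$(exterior contribution). There is no standalone ``exterior factor'' to cancel, and your expression $Z_{\G\setminus\{u_1,u_2\}}^{2\Delta/?}$ signals that the bookkeeping has already gone astray.

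The paper avoids all of this with a single observation that you are missing: since $V(\Hg)=V(\G\setminus\{u_1,u_2\})$ and the edge sets differ only by the $\Delta$ new edges $(u_{1,i},u_{2,i})$, one has
\[
\Pol(\Hg)=\bigl\{\mbx\in\Pol(\G\setminus\{u_1,u_2\}):\ x_{u_{1,i}}+x_{u_{2,i}}\le 1,\ 1\le i\le\Delta\bigr\},
\]
and therefore, directly from the definition of the partition function,
\[
\frac{Z_{\Hg}}{Z_{\G\setminus\{u_1,u_2\}}}
=\P_{\G\setminus\{u_1,u_2\}}\bigl(X_{u_{1,i}}+X_{u_{2,i}}\le 1,\ 1\le i\le\Delta\bigr).
\]
This turns the ratio into a single probability under a fixed Gibbs measure, with no normalization constants left to track. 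From here your plan works verbatim: condition on $\mbX_{\cup_{j,i}\partial\T^{j,i}}$, use the spatial Markov property (the $2\Delta$ subtrees are pairwise non-intersecting in $\G\setminus\{u_1,u_2\}$ because $u_1,u_2$ are at distance $\ge 2g+1$), and the conditional probability factors over $i$ into the stated integrals. Replace your cancellation paragraph with this polytope-inclusion step and the proof is complete.
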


\begin{proof}[Proof of Theorem \ref{theorem:rewire}]
We  first use Theorem~\ref{theorem:MainResult} to approximate the right-hand sides of \eqref{eq-ratio1} and
\eqref{eq-ratio2} for large $g$. 
Let $F_{\Delta-1} = F_{\Delta-1, \lambda}$ be the limit function in
Theorem~\ref{theorem:MainResult}. 
Then, given $\epsilon > 0$, by Theorem~\ref{theorem:MainResult} and the bounded convergence
theorem, 
 for sufficiently large $g$ and every boundary condition
 $\mbx_{\partial\T^{j,i}} \in \Pol (\partial \T^{j,i})$, $1\le i\le
 \Delta,$ $j= 1, 2$, 
\begin{align}
\label{est1}
\ds 1-\dfrac{\epsilon}{8}\le 
{\int_{t\in [0,1]}\lambda^t F^\Delta_{\Delta-1}(1-t)dt
\over
\int_{t \in [0,1]} \lambda^t
  \prod_{i=1}^\Delta F_{g-1,  \Delta-1}
  (1-t|\mbx_{\partial \T^{j,i}}) dt }
\le 1+ \frac{\epsilon}{8}, 
\end{align} 
Next, note that Theorem~\ref{theorem:MainResult} implies that the
probability measure $dF_{g-1,\Delta-1} (\cdot| \mbx_{\partial \T^{1,i}})$ converges to
the probability measure  $dF_{\Delta-1}(\cdot)$ 
in the Kolmogorov distance (and therefore the L\'{e}vy
distance, which induces weak convergence on $\R$), uniformly with
respect to all feasible boundary conditions (see \cite[Chapter 2]{HubRon09}
for definitions of the Kolmogorov and L\'{e}vy distances and the
relation between them). 
Since, again by Theorem \ref{theorem:MainResult}, $F_{g-1,\Delta-1} (\cdot|\mbx_{\partial \T^{2,i}})$, $g \in \N,$ is a
sequence of bounded continuous functions that converges uniformly to 
$F_{\Delta-1} (\cdot)$, also uniformly with respect to the boundary
condition $\mbx_{\partial \T^{2,i}} \in \Pol (\partial \T^{2,i})$, this shows that there exists $g$ large enough
such that  
\begin{equation} 
\label{est2}
1 - \frac{\epsilon}{8} \leq \displaystyle \dfrac{\left(\int_{t \in [0,1]}
    dF_{\Delta-1}    (t) F_{\Delta-1}
    (1-t)\right)^\Delta}{\prod_{i=1}^\Delta \int_{t \in [0,1]}
  dF_{g-1,  \Delta-1}    (t|\mbx_{\partial \T^{1,i}}) F_{g-1,
    \Delta-1}  (1-t|\mbx_{\partial \T^{2,i}})}  
\leq 1 + \frac{\epsilon}{8}, 
\end{equation}
for all boundary conditions $\mbx_{\cup_{j=1}^2 \partial T^j}$. 
Now,  fix $\ve > 0$ sufficiently small such that $(1-\ve) \leq (1-\ve/8)^3$ and
$(1+\ve/8)^3 \leq (1+\ve)$,  and $g= g(\ve, \Delta, \lambda)$ sufficiently large
such that  \eqref{est1} and \eqref{est2} hold.  
Given the uniformity in these estimates with respect to boundary
conditions, \eqref{est1} and \eqref{est2} also hold when  $\mbx$ is
replaced by $\mbX$.  Now,  taking the
product of the middle term in \eqref{est1} for $j=1, 2$ and then
taking expectations, and taking expectations of the denominator in
\eqref{est2}, combining this with 
\eqref{eq-ratio1}-\eqref{eq-ratio2},  and using the fact that $F_{\Delta-1} 
= F_{\Delta-1,\lambda}$ is absolutely continuous to write
$dF_{\Delta-1}(t) = \dot{F}_{\Delta-1} (t) dt$,  we obtain 
\eqref{eq:RHSLimit}. 
This completes the proof of Theorem \ref{theorem:rewire}. 
\end{proof}

We now turn to the proofs of the lemmas, starting with Lemma
\ref{lem-ratio1}. 

\begin{proof}[Proof of Lemma \ref{lem-ratio1}]  For notational
  conciseness, set $\tilde{\G} \mean \G \setminus \{u_1, u_2\}$.  Then,  since $u_1, u_2$ are not
neighbors, for $z_1, z_2 \geq 0$,
\begin{align*}
& \P( X_{u_1} \leq z_1, X_{u_2} \leq z_2) \\
& =Z_{\G}^{-1}\int_{x_{u_1} \in
  [0,z_1]} \lambda^{x_{u_1}}\int_{x_{u_2} \in [0,z_2]} \lambda^{x_{u_2}}
\int_{\mbx_{\tilde{\G}} \in \Pol (\tilde{\G}): x_{u_{j,i}} + x_{u_j}
  \leq  1 \forall i, j}
 \prod_{u\in V(\tilde{\G})} \lambda^{x_u} d\mbx,
\end{align*}
where the range of $i, j$ above is $i = 1, \ldots, \Delta$ and $j=1,
2$. 
Then 
\begin{align}
 \frac{\partial^2}{\partial z_1 \partial z_2} \P( X_{u_1} \leq z_1,
X_{u_2} \leq z_2)\Big|_{z_1 \downarrow 0, z_2 \downarrow 0} 
& = Z_{\G}^{-1} \int_{\mbx_{\tilde{\G}} \in \Pol (\tilde{\G})}
 \prod_{u\in V(\tilde{\G})} \lambda^{x_u} d\mbx_{\tilde{\G}} \notag \\
&=  Z_{\G}^{-1} Z_{\tilde{\G}}.  \label{ratio-pder}
\end{align}
Since the trees $\T^j, j = 1,2,$ are non-intersecting and each  $u_j$
lies in  $\T^j$, using the spatial
Markov property in the second equality below, we have
\begin{align*}
\P ( X_{u_1} \leq z_1, X_{u_2} \leq z_2) &= \E\left[ \P \left( X_{u_1} \leq z_1,
X_{u_2} \leq z_2|\mbX_{\cup_{j=1}^2 \partial \T^j} \right)\right] \\
& = \E \left[ \prod_{j=1}^2 \P \left( X_{u_j} \leq z_j|\mbX_{\partial
      \T^j}\right)\right]. 
\end{align*}
Now, fix $j\in \{1,2\}$. Then $\T^j$ is not isomorphic to $\T_{g,
  \Delta-1}$,  but each of the disjoint trees $\T^{j,i}, i = 1,
\ldots, \Delta,$ rooted at the corresponding neighbor
$u_{j,i}$ of $u_j$ are isomorphic to 
$\T_{g-1,\Delta-1}$.   Thus, another application of the spatial Markov
property shows that 
\begin{align*}
  \P ( X_{u_j} \leq z_j|\mbX_{\partial   \T^j}) & = \int_{x_{u_j}    \in [0,z_j]} \lambda^{x_{u_j}} 
\prod_{i=1}^\Delta \P\left( X_{u_{j,i}} \leq 1 - x_{u_j}| \partial
\mbX_{\partial \T^{j,i} }\right) dx_{u_j} \\
& \quad \times \left(\int_{x_{u_j}      \in [0,1]} \lambda^{x_{u_j}} 
\prod_{i=1}^\Delta \P\left( X_{u_{j,i}} \leq 1 - x_{u_j}| \partial
\mbX_{\partial \T^{j,i} }\right) dx_{u_j} \right)^{-1}. 
\end{align*}
Taking the derivative with respect to $z_j$, we get 
\begin{align*}
\frac{d}{d z_j}  \P ( X_{u_j} \leq
  z_j|\mbX_{\partial   \T^j}) \Big|_{z_j \downarrow 0}  &=\left( \int_{t    \in [0,1]} \lambda^{t} 
\prod_{i=1}^\Delta \P\left( X_{u_{j,i}} \leq 1 -t| \partial
\mbX_{\partial \T^{j,i} }\right) dt \right)^{-1} \\
&=\left( \int_{t    \in [0,1]} \lambda^{t} 
\prod_{i=1}^\Delta F_{g-1, \Delta-1} (1-t|\mbX_{\partial \T^{j,i} }dt \right)^{-1}, 
\end{align*}
where the last equality uses the fact that  $\T^{j,i} \sim
\T_{g-1,\Delta-1}$ and $u_{j,i}$ is its root. 
The last four displays, together with the dominated convergence theorem
(to justify interchange of $\E$ and differentiation $d/dz_j$)
and \eqref{ratio-pder}, yield \eqref{eq-ratio1}. 
\end{proof}

\begin{proof}[Proof of Lemma \ref{lem-ratio2}]
Recall that $\Hg$ is the
graph obtained from $\G\sm \{u_1,u_2\}$ by adding edges between
$u_{1,i}$ and $u_{2,i}$ for every $i=1,\ldots,\Delta$. 
Thus, 
\[  \Pol (\Hg) = \{ \mbx \in \Pol (\G\sm \{u_1, u_2\}): x_{u_{1,i}} +
x_{u_{2,i}} \leq 1,  1  \leq  i \leq \Delta \},
\]
and hence, 
\begin{align}
\label{eq:Ztilde}
\frac{Z_{\Hg}}{Z_{\G\sm \{u_1,u_2\}}} &=  \pr_{\G\sm  \{u_1,u_2\}}(X_{u_{1,i}}+X_{u_{2,i}}\leq 1, ~ 1\le i\le \Delta). 
\end{align} 
The right-hand side of \eqref{eq:Ztilde} above can be rewritten as 
\begin{align*}
& \P_{\G\sm \{u_1,u_2\}}(X_{u_{1,i}}+X_{u_{2,i}}\leq 1, ~ 1\le i\le
\Delta) \\
& = \E_{\G\sm \{u_1,u_2\}} \left[ \P_{\G\sm \{u_1,u_2\}}\left(X_{u_{1,i}}+X_{u_{2,i}}\leq 1, ~ 1\le i\le
\Delta | \mbX_{\cup_{i=1}^\Delta \cup_{j=1}^2  \partial \T^{j,i}} \right)\right]. 
\end{align*}
Since $u_1$ and $u_2$ are  more than a distance $2g+1$ apart, the trees $\T^{j,i}$,
$j=1, 2, i = 1, \ldots, \Delta$ are non-intersecting and disconnected 
in $\G \sm \{u_1, u_2\}$ (see Figure ~\ref{figure:rewire}).    Therefore, by 
the spatial Markov property, 
\begin{align}  
\notag
 &\P_{\G \sm \{u_1,u_2\}}\left( X_{u_{1,i}}+X_{u_{2,i}}\leq 1, ~ 1\le i\le\Delta| \mbX_{\cup_{j=1}^2\cup_{i=1}^\Delta \partial \T^{j,i}} \right) \\
 &=  \prod_{i=1}^\Delta \P_{\cup_{j=1}^2 \partial \T^{j,i}} \left(X_{u_{1,i}}+X_{u_{2,i}}\leq
   1|\mbX_{\cup_{j=1}^2 \partial \T^{j,i}} \right) \notag \\
&= \prod_{i=1}^\Delta \int_{x_{u_{1,i}} \in [0,1]} \P_{\T^{1,i}} (d
  x_{u_{1,i}}| \mbX_{\partial \T^{1,i}} ) \P_{\T^{2,i}} (X_{u_{2,i}}
  \leq 1 - x_{u_{1,i}} |\mbX_{\partial \T^{2,i}}).  
\label{expr1}
\end{align}
Now, each tree $\T^{j,i}$ is a $\Delta$-regular rooted tree (recall that the nodes
$u_1$ and $u_2$ have been removed), with each node (other than the leaves) having $(\Delta-1)$
children, and is thus isomorphic to $\T_{g-1, \Delta-1}$.   
Therefore, recalling the definition of $F_{n,\Delta} (\cdot|\mbx_{\partial \T_{n,\Delta}})$
$ = F_{n,\Delta,  \lambda}(\cdot|\mbx_{\partial \T_{n,\Delta}})$ from  Section
\ref{sec-mainres}, for every
$\mbx_{\cup_{j=1}^2 \partial \T^{j,i}}  \in \Pol(\cup_{j=1}^2 \partial \T^{j,i} ) $, we have
\begin{align*} 
& \P_{\cup_{j=1}^2 \partial \T^{j,i}}\left(X_{u_{1,i}}+X_{u_{2,i}}\leq
   1|\mbX_{\cup_{j=1}^2 \partial \T^{j,i}} = \mbx_{\cup_{j=1}^2 \partial \T^{j,i}}  \right) \\
& \quad =  \int_{x_{u_{1,i}} \in   [0,1]} dF_{g-1, \Delta-1} (x_{u_{1,i}}|\mbx_{\partial\T^{1,i}})
F_{g-1,\Delta-1}(1 - x_{u_{1,i}}|\mbx_{\partial\T^{2,i}}). 
\end{align*}
Combining the last three displays with \eqref{eq:Ztilde} we obtain
\eqref{eq-ratio2}. 
\end{proof}

\appendix

\section{Proof of Lemma \ref{lemma:monotonicity}}
\label{ap-monotonicity}

\begin{proof}
To prove the lemma it clearly suffices to establish the following \\
{\em Claim: }  For every
$n$, and every two boundary conditions
$\mbx_{\partial\T_n}$, $\mby_{\partial\T_n} \in [0,1]^{\partial \T_n}$
such that $\mbx_{\partial\T_n}\le \mby_{\partial\T_n}$
coordinate-wise, there exist 
random variables $X$ and $Y$ such that $\pr(X\le \ex)=F_n(\ex|\mbx_{\partial\T_n}),\pr(Y\le \ex)=F_n(\ex|\mby_{\partial\T_n}), \ex\in [0,1]$
and almost surely $X\le Y$ when $n$ is even and $X\ge Y$ when $n$ is
odd.

We establish the claim by induction on $n$, and repeatedly use
the following elementary observation regarding the coupling of two random variables with the same distribution: 
given a random variable $U$ with cumulative  distribution function $F$
and two real numbers $\theta_1<\theta_2$, there exists a probability
space and a random vector $(X_1, X_2)$ defined on it  such that $X_i$ has the distribution of $U$ conditioned on $X \le \theta_i, i=1,2,$ and $X_1\le X_2$
almost surely.  In what follows, let $U$ be distributed according to the free spin
measure $\mu$. 
We now prove the claim for $n=1$. Given $\mbx_{\partial\T_1}$, $\mby_{\partial \T_1}$,
let $\bar{x} = \max_{i \in \partial \T_1} (\mbx_{\partial \T_1})_i$ and $\bar{y} =
\max_{i \in \partial T_1} (\mby_{\partial \T_1})_i$.  Then, by the
definition of the hardcore model, $F_1(\cdot|\mbx_{\partial\T_1})$ is 
equal to the conditional distribution of $U$ given $U \leq 1- \bar{x}$
and likewise $F_1 (\cdot|\mbx_{\partial T_1})$ is the conditional
distribution of $U$ given $U \leq 1- \bar{y}$. Since
$\mbx_{\partial\T_1} \leq \mby_{\partial \T_1}$ implies $1  - \bar x
\geq 1 - \bar y$, the claim for $n=1$ follows from the 
observation made above.

Now, for the induction step, assume the claim holds for $n= 1, \ldots,
m-1$. Suppose $m$ is even.  Consider two copies of the tree $\T_m$, 
with roots $u$ and $v$ respectively, and label their children as 
   $u_1,\ldots,u_\Delta,$  and $v_1,\ldots,v_\Delta$, respectively. 
On these two copies consider two 
arbitrary boundary conditions $\mbx_{\partial\T_m}$ and
$\mby_{\partial\T_m}$, respectively, that satisfy 
$\mbx_{\partial\T_m}\le \mby_{\partial\T_m}$. 
 For $i = 1, \ldots, \Delta$, let $\mbx_{\partial\T_m}^i$
 (respy, $\mby_{\partial\T_m}^i$)  be the
 natural restriction of the boundary condition $\mbx_{\partial\T_m}$
 (respy, $\mby_{\partial\T_m}$) 
 to the subtree corresponding to $u_i$ (respy, $v_i$), each of which is a 
copy of the tree $\T_{m-1}$.
By the inductive assumption, since $m-1$ is odd,
for each $i=1,\ldots,\Delta$, there exist two coupled random variables
$X_i$ and $Y_i$ distributed according to
$F_{m}(\cdot|\mbx_{\partial\T_m}^i)$ and
$F_{m}(\cdot|\mby_{\partial\T_m}^i)$, respectively,  such that $X_i\ge Y_i$ almost surely. Generate pairs $(X_i,Y_i)$
independently across $i=1,\ldots,\Delta$ in this way. Now let $U$ be a
random variable distributed according to 
the free spin measure $\mu$.  Then the random variable 
$X$ distributed according to  $F_{m}(\cdot|\mbx_{\partial\T_m})$ 
has the conditional distribution of 
$U$ given   $U \le 1-\max_{1\le i\le \Delta}X_i$, integrated over the
joint distribution of  $X_1,\ldots,X_\Delta$. 
Similarly, $Y$ distributed according to $F_{m}(\cdot|\mbx_{\partial\T_m})$ is distributed as the
conditional distribution of $U$ given   $U \le 1-\max_{1\le i\le \Delta}Y_i$, integrated over the
joint distribution of  $Y_1,\ldots, Y_\Delta$. 
Since by construction we have $X_i\ge Y_i$, then $1-\max_{1\le i\le \Delta}X_i\le 1-\max_{1\le i\le \Delta}Y_i$. Thus, there
exists a coupling of $X$ and $Y$ such that $X\le Y$ almost
surely.  The case of odd $m$ is analyzed similarly, using that the
result holds for all even $n < m$. Hence, the details 
are omitted.   The claim then follows by induction. 
\end{proof}

\section{Proof of Lemma \ref{lemma:Rewire}} 
\label{sec-ap1}

\begin{proof}[Proof of Lemma \ref{lemma:Rewire}]   

In every step of the rewiring we delete two nodes in the graph. Thus, when
we perform $t\leq (N/2)-(2g+1)\Delta^{2g}$ rewiring steps sequentially, in the end
we obtain a graph with at least $N-2((N/2)-(2g+1)\Delta^{2g})=2(2g+1)\Delta^{2g}$ nodes.
Suppose that at  step $t\leq (N/2)-(2g+1)\Delta^{2g}$ we have a graph
$\G_t$ that  is $\Delta$-regular and has girth at least $g$.
We claim that the diameter of this graph is at least $2g+1$. Indeed, if the diameter is smaller than $2g+1$, then for any given
node $v$ any other node is reachable from $v$ by a path with length at most $2g$ and thus the total number of
nodes is at most $\sum_{0\leq k\leq 2g}\Delta^k<(2g+1)\Delta^{2g}$, which is a contradiction, and the claim is established.

Now, given any $t\leq (N/2)-(2g+1)\Delta^{2g}$,
suppose the rewiring was performed at least $t$ steps on pairs of nodes with distance at least $2g+1$ apart.
Select any two nodes $u_1,u_2$ in the resulting graph $\G_t$ which are at
the distance equal to the diameter
of $\G_t$, and thus are at least $2g+1$ edges apart. We already showed that the graph $\G_{t+1}$ obtained by rewiring $\G_t$ on
$v_1,v_2$ is $\Delta$-regular.
It remains to show it has  girth at least $g$. Suppose, for the purposes of contradiction, $\G_{t+1}$ has girth $\leq g-1$.
Denote by $u_{j,1},\ldots,u_{j,\Delta}$ the $\Delta$ neighbors of $u_j,~j=1,2$.
Suppose $k\geq 1$ is the number of
newly created edges which participate in creating a cycle with length $\leq g-1$. If $k=1$ and $u_{1,j},u_{2,j}$ is the pair
creating the unique participating
edge, then the original distance between $u_{1,j}$ and $u_{2,j}$ was at most $g-2$ by following a path on the cycle that does not
use the new edge.
But then the distance between $u_1$ and $u_2$ is at most $g<2g+1$,
which gives a contradiction.
Now suppose $k>1$, then there exists a path of length at most $(g-1)/k\leq (g-1)/2$ which uses only the original edges
(the edges of the graph $\G_t$)
and connects a pair  $v,v'$ of nodes from the set $u_{1,1},\ldots,u_{1,\Delta},u_{2,1},\ldots,u_{2,\Delta}$. If the pair is from the same set,
say $u_{1,1},\ldots,u_{1,\Delta}$,
then, since these two nodes are connected to $u_1$, we obtain a cycle in $\G_t$ with length $(g-1)/2+2<g$, leading to a contradiction
(since by assumption $g>3$).
If these two nodes
are from different sets, for example $v=u_{1,j},v'=u_{2,l}$, then we obtain that the
distance between $u_1$ and $u_2$ in $G_t$ is at most $(g-1)/2+2<2g+1$,which also leads to
a contradiction. So we conclude that $\G_{t+1}$ must have girth at
least $g$, as stated. 
\end{proof}


\begin{thebibliography}{10}

\bibitem{BandyopadhyayGamarnikCounting}
A.~Bandyopadhyay and D.~Gamarnik.
\newblock Counting without sampling. {A}symptotics of the log-partition
  function for certain statistical physics models.
\newblock {\em Random Structures and Algorithms}, 33(4):452--479, 2008.

\bibitem{barvinok2017combinatorics}
A.~Barivnok.
\newblock {\em Combinatorics and Complexity of Partition Functions}, volume~30
  of {\em Algorithms and Combinatorics}.
\newblock Springer, 2007.

\bibitem{BayatiGamarnikKatzNairTetali}
M.~Bayati, D.~Gamarnik, D.~Katz, C.~Nair, and P.~Tetali.
\newblock Simple deterministic approximation algorithms for counting matchings.
\newblock In {\em Proc. 39th Ann. Symposium on the Theory of Computing (STOC)},
  2007.

\bibitem{DyerFriezeKannan}
M.~E. Dyer and A.~Frieze an~R.~Kannan.
\newblock A random polynomial time algorithm for approximating the volume of
  convex bodies.
\newblock {\em Journal of the Association for Computing Machinery}, 38:1--17,
  1991.

\bibitem{dyer1988complexity}
Martin~E. Dyer and Alan~M. Frieze.
\newblock On the complexity of computing the volume of a polyhedron.
\newblock {\em SIAM Journal on Computing}, 17(5):967--974, 1988.

\bibitem{galvin2011multistate}
D.~Galvin, F.~Martinelli, Ramanan, and P.~Tetali.
\newblock The multistate hard core model on a regular tree.
\newblock {\em SIAM Journal on Discrete Mathematics}, 25(2):894--915, 2011.

\bibitem{GamarnikKatz}
D.~Gamarnik and D.~Katz.
\newblock Correlation decay and deterministic {FPTAS} for counting
  list-colorings of a graph.
\newblock {\em Journal of Discrete Algorithms}, 12:29--47, 2012.

\bibitem{GeorgyGibbsMeasure}
H.~O. Georgii.
\newblock {\em Gibbs measures and phase transitions}.
\newblock de Gruyter Studies in Mathematics 9, Walter de Gruyter \& Co.,
  Berlin, 1988.

\bibitem{HubRon09}
P.~J. Huber and E.~M. Ronchetti.
\newblock {\em {\em Robust statistics}}.
\newblock Wiley Series in Probability and Statistics. John Wiley \& Sons, Inc.,
  second edition edition, 2009.

\bibitem{JerrumSinclairHochbaumApproxAlgorithms}
M.~Jerrum and A.~Sinclair.
\newblock The {M}arkov chain {M}onte {C}arlo method: an approach to approximate
  counting and integration.
\newblock In D.~Hochbaum, editor, {\em Approximation algorithms for {NP}-hard
  problems}. PWS Publishing Company, Boston, MA, 1997.

\bibitem{KellyHardCore}
F.~Kelly.
\newblock Stochastic models of computer communication systems.
\newblock {\em J. R. Statist. Soc. B}, 47(3):379--395, 1985.

\bibitem{li2013correlation}
Liang Li, Pinyan Lu, and Yitong Yin.
\newblock Correlation decay up to uniqueness in spin systems.
\newblock In {\em Proceedings of the Twenty-Fourth Annual ACM-SIAM Symposium on
  Discrete Algorithms}, pages 67--84. Society for Industrial and Applied
  Mathematics, 2013.

\bibitem{LueRamZie06}
B.~Luen, K.~Ramanan, and I.~Ziedins.
\newblock Nonmonotonicity of phase transitions in a loss network with controls.
\newblock {\em Ann. Appl. Probab.}, 16(3):1528--1562, 2006.

\bibitem{MezardParisiCavity}
M.~Mezard and G.~Parisi.
\newblock The cavity method at zero temperature.
\newblock {\em Journal of Statistical Physics}, 111(1-2):1--34, 2003.

\bibitem{RamananSenguptaZiedinsMitra}
K.~Ramanan, A.~Sengupta, I.~Ziedins, and P.~Mitra.
\newblock Markov random field models of multicasting in tree networks.
\newblock {\em Advances in Applied Probability}, 34(1):58--84, 2002.

\bibitem{MezardIndSets2004}
O.~Rivoire, G.~Biroli, O.~C. Martin, and M.~Mezard.
\newblock Glass models on {B}ethe lattices.
\newblock {\em Eur. Phys. J. B}, 37:55--78, 2004.

\bibitem{SimonLatticeGases}
B.~Simon.
\newblock {\em The statistical mechanics of lattice gases, {V}ol. {I}}.
\newblock Princeton Series in Physics, Princeton University Press, Princeton,
  NJ, 1993.

\bibitem{sly2010computational}
Allan Sly.
\newblock Computational transition at the uniqueness threshold.
\newblock In {\em Foundations of Computer Science (FOCS), 2010 51st Annual IEEE
  Symposium on}, pages 287--296. IEEE, 2010.

\bibitem{Spitzer75}
F.~Spitzer.
\newblock Markov random fields on an infinite tree.
\newblock {\em Ann. Prob.}, 3:387--398, 1975.

\bibitem{weitzCounting}
D.~Weitz.
\newblock Counting independent sets up to the tree threshold.
\newblock In {\em Proc. 38th Ann. Symposium on the Theory of Computing}, 2006.

\bibitem{Zachary83}
S.~Zachary.
\newblock Countable state space {M}arkov random fields and {M}arkov chains on
  tree.
\newblock {\em Ann. Prob.}, 11:894--903, 1983.

\end{thebibliography}
\end{document}